\documentclass[12pt, oneside]{amsart}

\usepackage[utf8]{inputenc}
\usepackage[T1]{fontenc}

\usepackage{amssymb}
\usepackage{amsthm}
\usepackage{amsmath}
\usepackage[only,Yup]{stmaryrd} 
\usepackage{mathrsfs}    
\usepackage[new]{old-arrows}   
\usepackage{comment}
\usepackage{soul}
\usepackage{cancel}
\usepackage{bm}
\usepackage{biblatex} 
\usepackage[a4paper,left=2cm,top=3cm,right=2cm,bottom=2cm]{geometry}

\usepackage[all,cmtip]{xy}

\usepackage{enumerate}

\usepackage{indentfirst}

\usepackage{hyperref}
\hypersetup{colorlinks=true,allcolors=black}  

\usepackage{pstricks}
\usepackage{graphicx}

\usepackage{multicol}

\newtheorem{teo}{Theorem}[section]

\newtheorem{exe}{Example}[section]
\newtheorem{rmk}{Remark}[section]

\newtheorem{lemma}{Lemma}[section]
\newcommand{\qcd}{\begin{flushright} $\square$ \end{flushright}}

\newcommand{\f}{\mathcal{F}}

\mathchardef\ordinarycolon\mathcode`\:
\mathcode`\:=\string"8000
\begingroup \catcode`\:=\active
  \gdef:{\mathrel{\mathop\ordinarycolon}}
\endgroup

\begin{document}

\title{Transverse stable causality in Lorentzian foliations}

\author{H. A. Puel Martins}
\address{Departamento de Matemática, Universidade Federal de Santa Catarina, R. Eng. Agr. Andrei Cristian Ferreira, 88040-900, Florianópolis - SC, Brazil}
\email{henrique.martins@.ufsc.br}

\author{I. P. Costa e Silva}
\address{Departamento de Matemática, Universidade Federal de Santa Catarina, R. Eng. Agr. Andrei Cristian Ferreira, 88040-900, Florianópolis - SC, Brazil}
\email{}

\author{V. L. Espinoza}
\address{Departamento de Matemática, Universidade Federal de Santa Catarina, R. Eng. Agr. Andrei Cristian Ferreira, 88040-900, Florianópolis - SC, Brazil}
\email{}

\subjclass[2020]{53C12, 53C50}

\newenvironment{proofoutline}{\proof[Proof outline]}{\endproof}
\newenvironment{proofcomment}{\proof[Comment on the proof]}{\endproof}
\theoremstyle{definition}
\newtheorem{example}{Example}[section]
\newtheorem{definition}[example]{Definition}
\newtheorem{remark}[example]{Remark}
\theoremstyle{plain}
\newtheorem{proposition}[example]{Proposition}
\newtheorem{theorem}[example]{Theorem}
\newtheorem{corollary}[example]{Corollary}
\newtheorem{claim}[example]{Claim}
\newtheorem{conjecture}[example]{Conjecture}
\newtheorem{thmx}{Theorem}
\renewcommand{\thethmx}{\Alph{thmx}} 
\newtheorem{corx}[thmx]{Corollary}
\renewcommand{\thecorx}{\Alph{corx}} 

\newcommand{\dif}[0]{\mathrm{d}}
\newcommand{\od}[2]{\frac{\dif #1}{\dif #2}}
\newcommand{\pd}[2]{\frac{\partial #1}{\partial #2}}
\newcommand{\dcov}[2]{\frac{\nabla #1}{\dif #2}}
\newcommand{\proin}[2]{\left\langle #1, #2 \right\rangle}
\newcommand{\g}[0]{\mathcal{G}}
\newcommand{\metric}{\ensuremath{\mathrm{g}}}
\begin{abstract}
We introduce and investigate a notion analogous to stable causality for Lorentzian foliations, considerably extending the framework of transverse causality initiated in \cite{us}. It is well-known that in spacetime geometry stable causality is characterized by several equivalent conditions, such as the stability of non-existence of causal loops under metric perturbations, the existence of smooth temporal functions, and relation-theoretic formulations via the so-called Seifert and $K^+$ relations. We define natural transversal analogues of these distinct formulations and establish partial logical implications between them in the foliation setting. Whether and to what extent the full equivalences can be established remains an open question for general foliations, partly due to the eventual non-Hausdorff nature and other topological complexities of arbitrary leaf spaces. Furthermore, we demonstrate that for the important class of \textit{simple} foliations, which are defined by certain submersions, the equivalences of almost all the transverse analogues of stable causality are indeed recovered. 
\end{abstract}

\maketitle
\setcounter{tocdepth}{1}
\tableofcontents

\section{Introduction}

Given an $n$-manifold $M=M^n$ and an integer $0\leq q\leq n$, consider a \textit{(regular) codimension $q$ foliation } of $M$, by which we mean a partition $\f$ of $M$ by codimension $q$ immersed connected submanifolds - the \textit{leaves} of $\f$ - satisfying a suitable local regularity condition basically meaning that $\f$ is ``locally given by the fibers of a submersion'' (see more details below). A vector $v \in T_xM$ is \textit{vertical} (with respect to $\f$ if it is tangent to the leaf $L_x$ through $x \in M$, and the set of such vertical vectors mesh together as an $(n-q)$-dimensional distribution $T\f\subset TM$ called the \textit{vertical distribution} of $\f$. Any smooth section of $T\f$
is a \textit{vertical vector field}, and the collection of vertical vector fields on $M$ form a Lie subalgebra of $\mathfrak{X}(M)$ by Frobenius' theorem.  

The vertical distribution naturally defines a rank $q$ vector bundle on $M$ whose fiber at $x\in M$ is $T_xM/T\f_x$, the \textit{normal bundle} $\nu\f$ of the foliation. Geometric structures on $\nu \f$ which are invariant by the local flow of vertical vector fields are generally called \textit{transverse (or $\f$-basic)} structures. Many types of transverse structures have been studied in the literature, such as transverse orientability \cite{PMIHES_1981__54__5_0,Zung_2024}, $\f$-basic cohomology \cite{GoertschesToben+2018+1+40, Habib_2024,  royo2005top}, transverse symplectic structure \cite{10.1093/qmath/haaa071, 10.1093/qmath/hax051}, and  transverse semi-Riemannian geometry \cite{dolgonosova2018pseudo, caramello2025hadamardtheoremtransverselyaffine, caramello2024transversegeometrylorentzianfoliations}. 

The reason why transverse structures are an important part of the foliation-theoretic lore is twofold. First, such structures immediately generalize the corresponding structures on $M$, as they reduce to the latter in the trivial case of a codimension $q=n$ foliation by points. Second, the invariance of these structures by local flows of vertical vector fields means that they can be quite naturally interpreted as inducing a sort of ``low-regularity'' geometric structure on the \textit{leaf space} of the foliation  (i.e., the quotient space of the equivalence relation given by identifying points on the same leaf). Accordingly, the foliation-theoretic philosophy adopted here is that \textit{transverse geometry equals geometry on the leaf space}. 

Leaf spaces provide a broad class of low-regularity topological spaces, which includes not only smooth manifolds themselves, but also another fairly well-known ``irregular'' geometric structure which generalizes them, the so-called \emph{orbifolds}. 
These low-regularity objects appear in various parts of mathematics as orbit spaces of locally free Lie group actions; orbifolds arise in the theory of 3-manifolds as quotients of Seifert fiber spaces, and they also naturally emerge in some areas of mathematical physics, for example in phase spaces that are quotients of infinite-dimensional spaces by symmetry groups, such as Yang-Mills connections quotients by gauge groups in field theories, or moduli spaces of metrics in general relativity \cite{emmrich}. Although orbifolds form a relatively small subclass of leaf spaces, they have an especially rich geometric theory that includes generalizations of (semi-)Riemannian metric notions in a number of ways, such as geodesics and curvature, with many results analogous to those of usual geometry on manifolds, e.g., a version of the Bonnet-Myers finite diameter theorem for orbifolds. (See \cite{caramello3} for an excellent review of the key geometrical and topological properties of orbifolds, including extensive classic and recent references.)  

Another particularly well-established type of transverse structure appear in the so-called \textit{(regular) Riemanniann foliations}, consisting of a regular foliation $\f$ on $M$ together with a Riemannian fiber metric $\sigma$ on the normal bundle $\nu \f$ invariant by the local follows of vertical fields, i.e., $\mathcal{L}_V\sigma =0$ for any vertical vector field $V$, where $\mathcal{L}_V$ denotes the Lie derivative. Riemannian foliations have a well-developed, beautiful structural theory due P. Molino \cite{molino, molino2}. However, the also very natural case of \textit{Lorentzian foliations} (for which $\sigma$ is taken to be a Lorentzian metric instead of Riemannian) has hitherto remained largely unexplored (but see \cite{russas}). As pointed out above, the study of Lorentzian foliations can be viewed as a low-regularity generalization of Lorentzian geometry.  

Now, the broader study of Lorentzian geometry in low regularity has captured the interest of researchers for at least a decade. But here the term ``low-regularity'', somewhat vague as it is, applies to a number of related but distinct lines of investigation. It may refer to geometric objects with low differentiability (say, Lorentzian metrics of regularity less than $C^2$) on an otherwise smooth ($C^\infty$) background manifold, but it may also involve relaxing the degree of differentiability of the manifold itself. Some notable results of the former sort include the development of causality in spacetimes with $C^0$ metrics \cite{chrusciellowcausal1, c11-1}, the Hawking and Penrose singularity theorems for $C^{1,1}$ metrics \cite{c11-3,  c11-5, c11-6}, event horizons with low regularity \cite{area}, and a theory of causality for cone structures on manifolds \cite{conefields1,minguzzicones}. The theory of Lorentzian length spaces \cite{length2, Hau_2020, length3} is a primary example of a framework with properties analogous to those of Lorentzian geometry but without differentiability or even a manifold structure. The latter picks up a sort of abstract approach to the spacetime structure going back to the so-called \textit{causal spaces} introduced Kronheimer and Penrose, and the \textit{timelike spaces} of H. Busemann \cite{kron-pen,H1967}.

A difficulty of many of the extant generalizations of geometric results in low regularity, and one that has motivated our studying them from a different perspective, is that those broader results often involve the development of difficult strategies and new, sometimes fairly \textit{ad hoc } techniques to resolve the difficulties generated by the specific brand of low regularity adopted. For example, it is well known that in the theory of Lorentzian length spaces the definition of a geodesic does not necessarily coincide with the usual definition on manifolds; also, in this environment, the generalization of convex neighborhood in Lorentzian geometry involves a technical notion called ``localizability'' (see \cite{Length1} for details); finally, the notion of ``Ricci curvatures bounded below'' requires a number of highly non-trivial technical new ideas inspired in optimal transport theory (see, \textit{e.g.}, \cite{mondino}). Turning to the geometry of orbifolds, even the definition of a ``smooth function'' in that context is quite technical (\cite{caramello3} section 1.4). These difficulties are not only a question of acquiring greater familiarity with the pertinent structures, but can act as true obstacles in attempts to generalize some classic theorems in low-regularity situations. 

The geometry on leaf spaces, on the other hand, although it can be very irregular from a differential-geometric perspective, and often even non-Hausdorff, has a natural, relatively straightforward regularization algorithm via the transverse geometry of the foliation. This allows one to use comparatively much simpler and more systematic differential-geometric techniques, and interpret these as addressing geometric problems on the leaf space. 

This philosophy can be brought to bear on one key notion in Lorentzian geometry, that of a \textit{causal structure}. This concerns the connectivity of points in a Lorentzian manifold via the so-called \textit{causal curves}, and the many possible causal structures are classified via the so-called \textit{causal ladder (or hierarchy) of spacetimes} (see \textit{e.g.} \cite{minguzzi-sanchez} and references therein). In a recent work \cite{us} the idea was explored of endowing the leaf space $M/\f$ with a \textit{transverse causal structure}, obtained via a transverse Lorentzian metric on $(M,\mathcal{F})$. Using that, the authors have been able to describe the initial rungs of a \textit{transverse causal ladder}, obtaining several analogues of classic theorems in causality theory. 

However, a transverse analogue of the so-called \emph{stable causality} with its subtleties and multiple equivalent formulations was on that occasion deferred to future work. A key goal of this article is to partially bridge that gap by giving a description of a \textit{transverse stable causality}, by adapting the germane spacetime notions to the foliated setting. It is well-known that stable causality has a number of different characterizations in spacetimes, such as the existence of time functions, the existence of temporal functions, and a relation-theoretic connection with the so-called $K^+$ relation (see \cite{Minguzzi_2009} for extensive recent and classic references). The following list summarizes the main equivalent definitions (see Figure \ref{equivalences}) of  stable causality in spacetimes: 
\begin{figure}[h]
\centering{
\scalebox{0.8}{
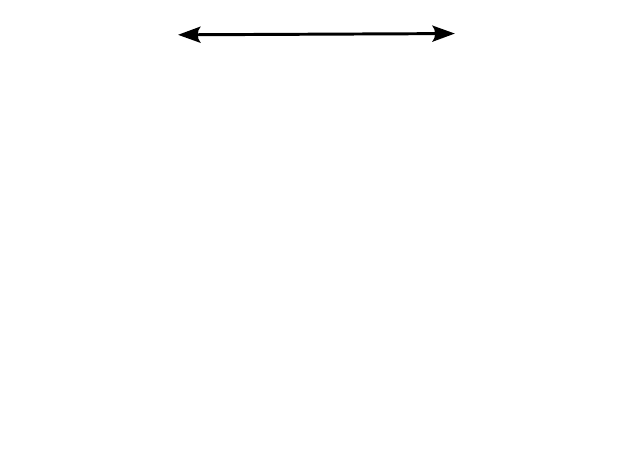}
\caption{Diagram of logical implication between the several equivalent definitions of stable causality. The reader can find proofs in the references on the corresponding arrows. }
\label{equivalences}}
\end{figure}

\begin{enumerate}
\item There is a Lorentzian metric $g^\prime$ on $M$ with $g<g^\prime$ such that $(M, g^\prime)$ is a causal spacetime, where $g<g^\prime$ means that every vector $g$-causal is $g^\prime$-timelike;
\item There is a $C^0$-topology neighborhood $\mathcal{U}$ of $g$ in the space of Lorentzian metrics of $M$ such that for every $g^\prime\in\mathcal{U}$ we have that $(M, g^\prime)$ is a causal spacetime;
\item $(M, g)$ admits a \textit{time function} $f:M\rightarrow \mathbb{R}$, that is, $f$ is continuous and increases along future-directed causal curves;
\item $(M, g)$ admits a \textit{temporal function} $f:M\rightarrow \mathbb{R}$, that is, $f$ is smooth and the gradient $\nabla f$ is a past-directed timelike vector field;
\item The \textit{Seifert relation} $J_S(g)$ is an antisymmetric relation on $M\times M$, where $(p,q)\in J_S(g)\iff q\in\bigcap_{g^\prime> g}J^+_{g^\prime}(p)$;
\item The $K^+$ relation is an antisymmetric relation on $M\times M$, where $K^+$ is defined as the smallest closed, reflexive and transitive relation which contains the relation $J^+_g$.
\end{enumerate}

The main goal of this paper is to introduce the ``transversal analogues'' of each of these notions around transverse stable causality, and provide partial logical connections between them analogous to those in spacetimes as far as possible. However, it is far from obvious that these transverse analogues should all be equivalent in the broader context we consider here; indeed, we have not been able at this juncture to establish their full equivalence in general. But we have been able to show, in the important intermediate case of foliations given by the fibers of the so-called \textit{simple} submersions, that most - though not all - of the equivalences are actually recovered. We believe that the existing relations are interesting and intriguing enough to merit further investigation, which we defer to future work. 

The rest of the paper is organized as follows. {
In {Section \ref{sec:prelim}}, we review the essential preliminary concepts of foliation theory, including basic functions, vertical distributions, and the definitions of bundle-like metrics and semi-Riemannian (and specifically Lorentzian) foliations.
	In {Section \ref{sec:firstnotion}}, we recall the basic notions of transverse causality in Lorentzian foliations first introduced in \cite{us}. These include the definitions of transversely causal vectors, transverse time orientation, and the precedence relations that form the initial rungs of the transverse causal ladder. 
	In {Section \ref{sec9}}, we introduce a core definition of \textit{transverse stable causality} which we take as the main one, and lay out its foundational properties. 
	{Section \ref{sec:whitneytopology}} discusses the application of Whitney $C^r$ topologies to define the space of transverse Lorentz metrics, setting the topological groundwork for perturbations of the metric and its relationship to the adopted transverse stable causality notion.
	In {Section \ref{subsec9.2}}, we delve into the so-called \textit{transverse time functions}, exploring the existence and implications of continuous transverse time functions and smooth \textit{transverse temporal functions}. 
	In {Section \ref{sec:k-causal}}, we investigate a notion of transverse $K$-causality, examining the transverse $K^+$ relation and discussing the nuances of extending this relation-theoretic concept to the foliated setting.
	{Section \ref{sec:submersions}} applies the framework developed in the previous sections to the important intermediate case of \textit{simple foliations} (those given by the fibers of a simple submersion), showing that in this scenario, most of the classical equivalences for stable causality are indeed successfully recovered. 
	Finally, {Section \ref{sec:causalhierq}} concludes the main body of the paper with some closing remarks on how these new definitions and equivalences fit into the broader transverse causal hierarchy. This issue turns out to be rather subtle because it involves not only causal aspects, but the topology of the leaf space must play an important role, and it remains to be clarified what the precise, optimal assumptions are for the latter. Because of these subtleties, we shall not be able deal with it in full here, but will return to it in future work. 
	
	   {Appendices \ref{appendixA} and \ref{appendixB}} expand on the many technical details regarding the $C^r$ topology for the space of transverse Lorentz metrics $\mathcal{TL}^r(M,\mathcal{F})$ and establish its homeomorphism to the space of holonomy-invariant Lorentzian fiber metrics on the normal bundle $\mathrm{Lor}^r_\intercal(\nu\mathcal F)$.}

\section{Preliminaries} \label{sec:prelim}

We fix for the remainder of this paper a \textit{regular foliated manifold}, that is, a pair $(M, \mathcal{F})$, where $M$ is a smooth (i.e., $C^\infty$) real manifold of dimension $n\geq 3$ and $\f$ is a partition of $M$ by $C^\infty$ immersed submanifolds - the \textit{leaves} of the foliation $\f$ - of codimension $q=codim(\mathcal{F})$ which arrange themselves locally ``like the fibers of a submersion'' in a precise technical sense. The reader is referred to Refs. \cite{candel, mrcun, molino} for further details. 

We briefly recall here some basic foliation-theoretic concepts we shall use throughout. A function $f:M\rightarrow\mathbb{R}$ which is constant along the leaves of $\f$ is called ($\f$-)\textit{basic}, and the set of smooth basic functions is denoted as $C^\infty(\f)$. Given any $x\in M$, let $L_x$ be the leaf of $\f$ through $x$. A vector $v \in T_xL_x$ is said to be \textit{vertical}. The \textit{vertical distribution} defined by $\f$ is simply the subbundle $T\f\subset TM$, whose fibers are the tangent spaces to the leaves, and whose space of sections we denote either by $\Gamma(T\f)$ or $\mathfrak{X}(\f)$ interchangeably. Any such (local or global) section is a \textit{vertical} vector field. Smooth basic functions are characterized by their having vanishing directional derivatives along vertical vectors. Let $\mathcal{U}\subset M$ be an open set, and denote by $T\f|_{\mathcal{U}}$ the restriction of $T\f$ to $\mathcal{U}$. A vector field $X\in\mathfrak{X}(\mathcal{U})$ is called \textit{projectable} (with respecto to $\f$) if $[V,X]\in\mathfrak{X}(\f|_{\mathcal{U}})$ for all $V\in\mathfrak{X}(\f|_{\mathcal{U}})$. (Observe that by Frobenius' theorem all vertical vector fields are immediately projectable in this sense.) The collection of all projectable vector fields forms a Lie subalgebra of $\mathfrak{X}(M)$, as well as a module over the $\mathbb{R}$-algebra $C^\infty (\f)$. 

A $(0,s)$-tensor field $T$ on $\mathcal{U}$ is \textit{horizontal} if for any $x\in \mathcal{U}$, $T_x$ vanishes whenever some vertical vertical $v\in T\f_x$ is plugged into any of its slots. $T$ is said to be \textit{holonomy-invariant} if $\mathcal{L}_VT\equiv 0$ for all $V\in\mathfrak{X}(\f|_{\mathcal{U}})$. Finally, we say that $T$ is ($\f$-)\textit{basic} if it is both horizontal and holonomy-invariant. 

As mentioned in the Introduction, the rank $q$ vector bundle on $M$ whose fiber at $x\in M$ is $T_xM/T\f_x$ is the \textit{normal bundle} $\nu\f$ of the foliation. We thus have a short exact sequence of vector bundle maps:
$$0\rightarrow T\f \rightarrow TM \rightarrow \nu \f \rightarrow 0. $$
A smooth $q$-dimensional distribution $\mathcal{H}\subset TM$ is said to be \textit{horizontal} (with respect to $\f$) if it yields a splitting of the above short sequence, i.e., we have a direct sum decomposition $TM=T\f \oplus \mathcal{H}$. 

If a horizontal distribution is chosen, then any vector $v \in \mathcal{H}$ is said to be ($\mathcal{H}$-)\textit{horizontal}. This induces a concomitant smooth decomposition of any vector field $X\in \mathfrak{X}(M)$ into a \textit{vertical part} $X^\top \in \mathfrak{X}(\f)$ and a ($\mathcal{H}$-)\textit{horizontal part} $X_H \in \Gamma(\mathcal{H})$.

A natural way to define a horizontal distribution is via a Riemannian metric $h$ on $M$: we can put $\mathcal{H}_x := T\f_x^\perp$, the set of vectors $h$-normal to $T\f_x$. This splitting naturally generalizes to the broader class of semi-Riemannian metrics via the following notion.
\begin{definition}[Bundle-like metrics]\label{def bundle-like metric}
    Let $\f$ be a codimension $q$ foliation on the manifold $M$. A semi-Riemannian metric $g$ on $M$ is \textit{bundle-like (with respect to $\f$)} if the following conditions are satisfied.
    \begin{itemize}
        \item[i)] $g$ is \textit{adapted} to $\f$, \textit{i.e.}, the rank $q$ bundle $T\f^\perp$ of vectors $g$-normal to the leaves defines a horizontal distribution. (For any $X\in \mathfrak{X}(M)$ we denote its \textit{vertical part} as $X^\top \in \mathfrak{X}(\f)$ and its ($g$-)\textit{horizontal part} as $X^\perp \in \Gamma(T\f^\perp)$.)
        \item[ii)]For every open set $\mathcal{U}\subset M$, given projectable and $g$-horizontal vector fields $X,Y\in\mathfrak{X}(\mathcal{U})$, we have that $g(X,Y)$ is a basic function on $\f|_{\mathcal{U}}$.
    \end{itemize}
\end{definition}


A key notion for us is the following.

\begin{definition}[Semi-Riemannian foliations]\label{def: SR foliations}
Let $M$ be an $n$-dimensional manifold and let $\mu,q \in \mathbb{Z}$ with $0<q<n$ and $0\leq \mu\leq q$. A {\em codimension $q$ semi-Riemannian foliation of index $\mu$} on $M$ is a pair $(\f,g_\intercal)$ such that 
\begin{itemize}
    \item[i)] $\f$ is a codimension $q$ foliation on $M$;
    \item[ii)] $g_\intercal$ is an $\f$-basic symmetric $(0,2)$-type tensor field on $M$ such that, for each $x\in M$ the bilinear form $g_\intercal(x):T_xM\times T_xM\rightarrow\mathbb{R}$ has index $\mu$ and, furthermore its kernel 
    $$\ker g_\intercal(x) := \{v \in T_xM \, : \, g_\intercal(x)(v,w)=0, \, \forall w \in T_xM \} $$
    coincides with the vertical space $T\f_x$. Such a tensor $g_\intercal$ is said to be a \textit{semi-Riemannian transverse metric (tensor)} (of index $\mu$) for $\f$. 
\end{itemize}
In particular, a semi-Riemannian foliation [resp. transverse metric] of index $1$ and codimension $q\geq2$ is called a \textit{Lorentzian foliation} [resp. \textit{Lorentzian transverse metric}]. It is a \textit{Riemannian foliation} [resp. \textit{Riemannian transverse metric}] if it has index zero. 
\end{definition}

There is a natural relationship between bundle-like metrics and transverse metrics given as follows. Given a semi-Riemannian foliation $(\f,g_\intercal)$ with index $\mu$ there always exists a bundle-like semi-Riemannian metric $g$ of the same index associated with the transverse metric $g_\intercal$: just pick any Riemannian metric $h$ on $M$ and define $g(X,Y):= g_\intercal(X^\perp, Y^\perp) + h(X^\top, Y^\top), \quad \forall X,Y \in \mathfrak{X}(M)$, where the vertical and horizontal parts on the right-hand side are taken with respect to $h$. Conversely, if $g$ is a bundle-like metric for $\mathcal{F}$, then the $(0,2)-$tensor $g_\intercal$ defined by $g_\intercal(X,Y):=g(X^\perp,Y^\perp)$ is a transverse semi-Riemannian metric on $\f$.


\section{Transverse causal structure - first notions}\label{sec:firstnotion}
We will be dealing in this paper almost exclusively with transverse Lorentzian foliations and Lorentzian bundle-like metrics thereon. By a slight abuse of notation we shall refer to the full triple $(M,\f,g_\intercal)$ where $(\f,g_\intercal)$ is a codimension $q(\geq 2)$ Lorentzian foliation on the manifold $M$ as a Lorentzian foliation, and we fix one such for the rest of this section. We shall always assume throughout that $M$ is connected. 

We shall briefly recall here the main foliation-theoretic analogues of the causal structure on spacetimes, referring the reader to \cite{us} for further details. These are indicated by the general term \textit{transverse} (or its grammatical variations) to differentiate them from the usual spacetime terms. 

Let $x\in M$. A non-vertical vector $v \in T_xM$ is said to be 
\begin{itemize}
    \item \textit{transversely timelike} if $g_\intercal(v,v)<0$,
    \item \textit{transversely lightlike} (or \textit{transversely null}) if $g_\intercal(v,v) =0$,
    \item \textit{transversely causal} if $g_\intercal(v,v) \leq 0$, i.e., if $v$ is either transversely timelike or transversely null. 
\end{itemize}
These names indicate the \textit{transverse causal character} of the vector. A vector field $X\in \mathfrak{X}(M)$ is transversely timelike [resp. transversely null, transversely causal] if $X_x$ has the corresponding causal character at each $x\in M$. It can be easily seen that set of all transversely causal vectors in $T_xM$ has two connected components therein, each of which is called a \textit{causal wedge} (at $x\in M$). A \textit{transverse time orientation} on the Lorentzian foliation $(M,\f,g_\intercal)$ is a continuous choice of a causal wedge at each point $x\in M$, called the \textit{future causal wedge} (the other component being of course the \textit{past causal wedge}). It can be shown that a transverse time orientation exists if and only if there exists a globally defined transversely timelike vector field $X\in \mathfrak{X}(M)$. For the remainder of the paper we always assume that any given Lorentzian foliation $(M, \mathcal{F}, g_\intercal)$ is a transversely time-oriented. 

A \textit{future-directed transversely timelike/null/causal} curve (or vector field) is a piecewise smooth curve in $M$ whose tangent vectors are in the future causal wedge at each of its points. A \textit{past-directed} transversely timelike/null/causal curve is defined dually. 

The \textit{transverse chronological} ($<<_{M/\f}$) and \textit{transverse causal} ($\leq_{M/\f}$) \textit{precedence relations} between leaves of $\mathcal{F}$ are defined by the existence of a future-directed transversely timelike curve [resp. transversely causal] connecting these leaves, in complete analogy with the spacetime case. Thus, if $A$ is any set saturated by the leaves of $\mathcal{F}$ (that is, it contains any lef it intersects), then we define the sets
\begin{align*}
I^+_{M/\f} (A):=& \{L\in\mathcal{F}: \exists L^\prime \subset A, L^\prime<<_\intercal L\}\\
J^+_{M/\f} (A):=& \{L\in\mathcal{F}: \exists L^\prime \subset A, L^\prime\leq_\intercal L\},\\
I^+_\intercal(A) :=& \bigcup I^+_{M/\f} (A),\\
J^+_\intercal(A) :=& \bigcup J^+_{M/\f} (A).
\end{align*}
Observe that while $I^+_{M/\f}(A)$ and $J^+_{M/\f}(A)$ are subsets of the leaf space, $I^+_\intercal(A)$ and $J^+_\intercal(A)$ are saturated subsets of $M$ itself. The sets $I^-_{M/\f}(A), J^-_{M/\f}(A), I^-_\intercal(A)$ and $J^-_\intercal(A)$ are defined analogously by ``time-duality''.

In \cite{us}, the following rungs of the \textit{transverse causal ladder} were introduced, and their relative logical strength explored: 
\begin{definition}\label{defi: transverse ladder}
We say that $(M, \mathcal{F}, g_\intercal)$ is
\begin{itemize}
\item \textit{transversely totally vicious} if $L\cap I^+_\intercal(L) \neq \emptyset$ (equivalently $L\in I^{+}_{M/\f}(L)$), $\forall L\in\mathcal{F}$;
    \item \textit{transversely chronological} if $L\cap I^+_\intercal(L) =\emptyset$ (equivalently $L\notin I^{+}_{M/\f}(L)$), $\forall L\in\mathcal{F}$;
    \item \textit{transversely causal} if $L\nless_{M/\f} L$ $ \forall L\in\mathcal{F}$;
    \item \textit{transversely future [resp. past] distinguishing} if $\forall L,L'\in \f$
    $$L\leq_{M/\f} L' \text{ and } L \in \overline{J^+_{M/\f}(L')}\text{ [resp. $L'\in \overline{J^-_{M/\f}(L)}$]} \Longrightarrow L=L',$$
    and \textit{transversely distinguishing} if it is both transversely future distinguishing and transversely past distinguishing;
    \item \textit{transversely strongly causal} if $\forall L\in \mathcal{F}$ and every saturated open neighbourhood $\mathcal{U}\supset L$ in $M$, there exists an open neighborhood $L\subset \mathcal{V}\subset \mathcal{U}$ such that if $\gamma:[a,b]\rightarrow M$ is any transversely causal curve with $\alpha(a), \alpha(b)\in\mathcal{V}$ then $\alpha([a,b])\subset\mathcal{U}$;
    \item \textit{transversely globally hyperbolic} if it is transversely strongly causal and if for all $L, L'\in\mathcal{F}$ the \textit{causal prism} $J^+_\intercal(L)\cap J^-_\intercal(L')$ is \textit{transversely compact} in $M$, meaning it projects onto a compact set in the leaf space via the standard projection $\pi_\f:M\rightarrow M/\f$. 
\end{itemize}
\end{definition}

We say that a bundle-like Lorentzian metric $g$ on $(M,\f)$ is \textit{associated with} $g_\intercal$ if $g(X^\perp,Y^\perp) = g_\intercal(X,Y), \forall X,Y \in \mathfrak{X}(M)$. We denote by $\mathcal{BL}(g_\intercal)$ the set of all bundle-like Lorentzian metrics on $(M,\mathcal{F})$ which are associated with $g_\intercal$.
The transverse notions introduced above have some logical connections with the causal structure of spacetimes produced by  Lorentzian bundle-like metrics associated to it. For example, if $(M, \f, g_\intercal)$ is transversely causal, then it is transversely chronological, and $(M,g)$ is causal for any $g\in \mathcal{BL}(g_\intercal)$. More of these relations can be seen in \cite[Sec. 5]{us}.

\subsection{Causal wedges and cone fields} An important generalization of causal structure in general, and stable causality in particular, has been given in the literature via the notion of a \textit{cone field} over a manifold \cite{ minguzzicones,Fathi, conefields1}. While these structures are fairly general, they \textit{do not} directly apply to the transverse geometry dealt with here. We present a brief discussion of the issues that arise when trying to apply the pertinent techniques. (See \cite[section 2]{Fathi} for the terminology of cone fields used in this discussion.)

Let $g_\intercal$ be a transversely time-oriented transverse Lorentzian metric on the foliated manifold $(M,\f)$. We have seen that the set $\{v \in T_x M \setminus T_x \mathcal F : g_\intercal (v,v) \leq 0\}$ consists at the two causal wedges at $x \in M.$ We denote by $\mathcal W_x$ the \emph{closed} set $\{v \in T_x M : g_\intercal (v,v) \leq 0\}$ which contains the causal wedges \emph{and} the vertical space $T\f_x$. (This is the analogue of considering the causal cones plus the zero vector in Lorentzian geometry.) Such closed cones are indeed closed convex cones in the usual sense, and have nonempty interior. One might then try and consider mappings $x \in M \mapsto \mathcal W_x \subset TM$ and study the continuity of such set-valued maps under a suitable topology. However, crucial to all the methods and results of \cite{Fathi} and other papers on closed cone fields is the requirement that the closed convex cones at each point are also \emph{complete}, that is, they does not contain a complete affine line. This is a technical assumption needed in order for the dual and polar sets of complete cones to also be complete cones (\cite{boyd}, section 2.6). This requirement \emph{fails} for causal wedges precisely because of the inclusion of the vertical spaces.  
 
Alternatively, one might try to work directly with cones on the \emph{normal bundle} $\nu\mathcal F:=TM/T\f,$ since the quotient operation maps the wedge into a cone in this bundle, but the methods in \cite{ minguzzicones,Fathi} are strictly developed for $TM$, and we were not able to adapt them to $\nu\mathcal F$. For those reasons, the cone structure methods will not be employed here.

\section{Transverse stable causality}\label{sec9}

We are ready to introduce a notion of transverse stable causality on Lorentzian foliations. While stable causality is one of the most subtle steps on the causal ladder of spacetimes, it basically means that ``small perturbations'' of the spacetime metric should not create causal loops. Our goal here is to define a transverse analogue of that notion for Lorentzian foliations by clarifying what the terms in scare quotes ought to mean in this context.

In the case of spacetimes, it is well-known that there are a number of different but equivalent ways of making precise the basic idea described in the previous paragraph: by ``opening cones'', via a suitable Whitney topology on the space of metrics, via time functions, via temporal functions, among others. We briefly review some of these concepts here, and recall how they relate to the stable causality of spacetimes, referring to the standard references \cite{beem,hawking-ellis,minguzzi-sanchez} for proofs and further discussion. Since our goal is to find transverse analogues of these notions on Lorentzian foliations, we do not describe them all at once; rather we will gradually bring them up and propose at each step a suitable transverse analogue. At the end, we shall summarize the mutual logical relationships among the various notions. 

The most convenient way of describing (spacetime) causal stability for our purposes here is as follows. First, recall we can define, on the set $\mathrm{Lor}(M)$ of (smooth) Lorentzian metrics on a fixed manifold $M$, a (strict) partial order $<$ given by 
$$\forall g,g'\in \mathrm{Lor}(M),\, g<g' \text{ if for any $v\in TM\setminus \{0\}$, } g(v,v)\leq 0 \Rightarrow g'(v,v)<0.$$
In words, we write $g<g'$ if and only if any $g$-causal vector is $g'$-timelike, or equivalently, iff for all $x\in M$, the $g$-causal cone in $T_xM$ is contained in the interior of the $g'$-causal cone, i.e., the $g'$-timecone at $x$. Informally, one often describes this by saying that that ``$g$ has narrower cones than $g'$'', or that the cones of $g'$ are ``more open'' than those of $g$. 

With this definition in place, we say that a spacetime $(M,g)$ is \textit{stably causal} if there exists some $g' \in \mathrm{Lor}(M)$ such that $(M,g')$ is a causal spacetime and $g<g'$. In other words, we can ``open up the cones a little'' without creating causal loops. 

This definition is particularly convenient for us because it has an obvious transverse analogue. Given a foliated manifold $(M,\mathcal{F})$ of codimension $\geq 2$, we can define, on the set $\mathcal{TL}(M,\mathcal{F})$ of \textit{transverse} Lorentzian metrics on $(M,\mathcal{F})$ the (strict) partial ordering
$$\forall g_\intercal,g'_\intercal \in \mathcal{TL}(M,\mathcal{F}),  g_\intercal<g'_\intercal \text{ if for any $v\in TM\setminus T\mathcal{F}$, } g_\intercal(v,v)\leq 0 \Rightarrow g'_\intercal(v,v)<0.$$

We can now define our ``fiducial'' notion of transverse stable causality. 
\begin{definition}[Transverse stable causality]\label{trstablecsldefi}
A (transversely time-oriented) Lorentzian foliation $(M,\mathcal{F},g_\intercal)$ is \emph{transversely stably causal} if there exists some $g'_\intercal \in \mathcal{TL}(M,\mathcal{F})$ such that $g_\intercal <g'_\intercal$ and for which $(M,\mathcal{F},g'_\intercal)$ is a transversely causal Lorentzian foliation. 
\end{definition}

\begin{rmk}\label{rmkstablycausalimpliescausal}
 \emph{It is clear from the previous definition that any transversely stably causal Lorentzian foliation is also transversely causal. Moreover, when $g_\intercal <g'_\intercal$ a transverse time-orientation for $(M,\mathcal{F},g'_\intercal)$ is always fixed by declaring its future wedges to be those that contain the future wedges of $g_\intercal$, and we assume this convention throughout without further comment. }   
\end{rmk}

We define here a class of examples which will be further explored in Section \ref{sec:submersions}.
\begin{exe}[Lorentzian submersions]\label{submersionsexe}
 {\em 
Let $M^{n+q},B^q$ with $q\geq 2$ be connected manifolds, and let $\pi:M\rightarrow B$ be an onto submersion. Take $\f$ to be the foliation given by the connected components of the fibers of $\pi$. Let $h$ be a (time-oriented) Lorentz metric on $B$, and assume that $(B,h)$ is a stably causal spacetime. We easily check that $g_\intercal:= \pi^\ast h$ is a transverse Lorentzian metric on $(M, \f)$. Moreover, $(M,\f, g_\intercal)$ becomes a transversely time-oriented Lorentzian foliation if we pick at each point $x\in M$ as the future causal wedge the one that projects through $\pi$ onto the future causal cone at $\pi(x) \in B$. 
 
 Now, pick another Lorentz metric $h'$ on $B$ so that $h<h'$ and $(B,h')$ is a causal spacetime, and let $g'_\intercal: = \pi^\ast h'$ be the associated transverse Lorentzian metric. We first claim that $g_\intercal <g'_\intercal$. Indeed, given $x\in M$ and $v\in T_xM$ $g_\intercal$-transversely causal, we have 
 $$h_{\pi(x)}(d\pi_x(v),d\pi_x(v)) \leq 0 \stackrel{h<h'}{\Longrightarrow} h'_{\pi(x)}(d\pi_x(v),d\pi_x(v))(\equiv (g'_\intercal)_x(v,v))<0,$$
 thus proving the claim. 

 Next, we claim that $(M,\f,g'_\intercal)$ is a transversely causal foliation. Suppose not, and let $\gamma:[0,1] \rightarrow M$ be a future-directed transversely causal curve (with respect to $g'_\intercal$) such $\gamma(0),\gamma(1)$ are on the same leaf $L\subset \pi^{-1}(b)$ with $b \in B$. Then $\pi\circ \gamma$ is a closed $h'$-causal curve at $b$, contradicting the fact that $(B,h')$ is causal. We conclude that $(M,\f,g_\intercal)$ is a transversely stably causal foliation.
 }   
\end{exe}



\begin{exe}[Pullbacks]\label{pullbackssexe}
 {\em 
Let $(N,\g,h_\intercal)$ be a transversely time-orientable Lorentzian foliation, and consider a smooth map $\phi:M\rightarrow N$ which is transverse to the foliation $\g$ in the sense that 
 $$T_{\phi(x)}N = d\phi_x(T_xM)+ T_{\phi(x)}L_{\phi(x)}, \quad \forall x\in M,$$
 where $L_{\phi(x)}\subset N$ denotes the leaf of $\g$ through $\phi(x)$. The \textit{pullback foliation} $\f:=\phi^\ast\g$ on $M$ is given by the connected components of the sets $\phi^{-1}(L)$ for $L\in \g$. One can easily check that (1) $g_\intercal := \phi^\ast h_\intercal$ is a transverse Lorentzian metric on $(M,\f)$ such that $v\in T_xM$ is transversely null (resp. transversely timelike) on $(M,\f,g_\intercal)$ if and only if $d\phi_x(v)$ is transversely null (resp. transversely timelike) on $(N,\g,h_\intercal)$, (2) a natural transverse time-orientation is given on $(M,\f,g_\intercal)$ by declaring, for each $x\in M$, that $d\phi_x$ maps the future causal wedge of the latter foliation at $x$ into that of $(N,\g,h_\intercal)$ at $\phi(x)$. We refer to this as the induced transverse time-orientation. 

 Assuming $(N,\g,h_\intercal)$ is transversely stably causal, we claim that the pullback foliation $(M,\f,g_\intercal)$ with the induced transverse time-orientation is also transversely stably causal\footnote{The analogous statement also holds for the transverse chronological condition, the transverse causal condition, and transversely strong causality, with a proof straightforwardly adapted from the one we give here.}. Indeed, let $h'_\intercal$ be a transverse Lorentzian metric on $(N,\g)$ such that $h_\intercal<h'_\intercal$ and $(N,\g,h'_\intercal)$ is transversely causal. It is thus clear that $g_\intercal <\phi^\ast h'_\intercal =:g'_\intercal$. There remains to show that $(M,\f,g'_\intercal)$ is transversely causal. But given a transversely causal curve $\gamma:[0,1] \rightarrow M$ for $(M,\f,g'_\intercal)$ starting and ending at the same leaf $L \in \f$, $\phi\circ \gamma$ will be a transversely causal curve for $(N,\g,h'_\intercal)$ starting and ending at the same leaf $L'\supset \phi(L)$ of $\g$, in contradiction with the transverse causal condition therein. 
 }
\end{exe}

\medskip 

It is convenient to try and relate the notion of transverse stable causality with the (standard) causal stability of associated bundle-like metrics. 


\begin{teo}\label{widen-the-cone-widen-the-wedge}
Given a foliated manifold $(M,\mathcal{F})$ and $g_\intercal,g'_\intercal \in \mathcal{TL}(M,\mathcal{F})$, the following statements are equivalent.
\begin{enumerate}
    \item $g_\intercal<g_\intercal^\prime$.
    \item $\forall g\in\mathcal{BL}(g_\intercal), \exists g^\prime\in\mathcal{BL}(g^\prime_\intercal)$ such that $g<g^\prime$.
   \item $\exists g\in\mathcal{BL}(g_\intercal),\exists g^\prime\in\mathcal{BL}(g^\prime_\intercal)$ such that $g<g^\prime$.
\end{enumerate}
\end{teo}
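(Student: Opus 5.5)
The plan is to prove the cycle $(2)\Rightarrow(3)\Rightarrow(1)\Rightarrow(2)$. The implication $(2)\Rightarrow(3)$ is immediate once we recall that $\mathcal{BL}(g_\intercal)\neq\emptyset$. This follows from the construction after Definition \ref{def: SR foliations}, taking any Riemannian $h$ and setting $g=g_\intercal\oplus h|_{T\f}$.

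Throughout I will use one structural fact. If $g\in\mathcal{BL}(g_\intercal)$ with $g_\intercal$ Lorentzian, then $g$ has index $1$ and $g_\intercal$ already carries the negative direction. Hence $g|_{T\f}$ is positive definite. Writing $w=w^\top+w^\perp$ with respect to $g$, we then get
$$g(w,w)=g(w^\top,w^\top)+g_\intercal(w,w)\ \ge\ g_\intercal(w,w),$$
with equality iff $w^\top=0$.

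\emph{Proof of $(3)\Rightarrow(1)$.} Let $g<g'$ with $g\in\mathcal{BL}(g_\intercal)$ and $g'\in\mathcal{BL}(g'_\intercal)$, and let $v\notin T\f$ with $g_\intercal(v,v)\le 0$. Take the $g$-horizontal part $v^\perp$. It is nonzero because $v$ is not vertical, and $g(v^\perp,v^\perp)=g_\intercal(v,v)\le0$, so $v^\perp$ is $g$-causal. Therefore $g'(v^\perp,v^\perp)<0$. Since $T\f=\ker g'_\intercal$, we have $g'_\intercal(v,v)=g'_\intercal(v^\perp,v^\perp)$. The structural fact applied to $g'$ then gives $g'_\intercal(v^\perp,v^\perp)\le g'(v^\perp,v^\perp)<0$, which proves $g_\intercal<g'_\intercal$.

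\emph{Proof of $(1)\Rightarrow(2)$.} Fix $g\in\mathcal{BL}(g_\intercal)$ and let $\mathcal H=T\f^{\perp_g}$. Write $k:=g|_{T\f}$, which is positive definite. I will look for $g'$ of the form $g'=\lambda k\oplus g'_\intercal|_{\mathcal H}$ with respect to the splitting $TM=T\f\oplus\mathcal H$, where $\lambda>0$ is a smooth function to be chosen.

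First, $g'$ is bundle-like and associated with $g'_\intercal$, for any choice of $\lambda$. Indeed, $\mathcal H$ is $g'$-orthogonal to $T\f$, and for projectable horizontal $X,Y$ we have $g'(X,Y)=g'_\intercal(X,Y)$, which is basic. The function $\lambda$ only rescales the vertical part, and condition ii) of Definition \ref{def bundle-like metric} does not constrain that part, so $\lambda$ need not be basic.

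Next, to check $g<g'$, take a nonzero $g$-causal $v=a+b$ with $a\in T\f$ and $b\in\mathcal H$. If $b=0$, then $k(a,a)\le0$ forces $a=0$, a contradiction; so $b\ne0$. Moreover $k(a,a)\le -g_\intercal(b,b)$, so $b$ is $g_\intercal$-causal and non-vertical. We need
$$g'(v,v)=\lambda k(a,a)+g'_\intercal(b,b)<0,$$
and it suffices to arrange
$$\lambda(x)\bigl(-g_\intercal(b,b)\bigr)+g'_\intercal(b,b)<0\qquad\text{for all nonzero } g_\intercal\text{-causal } b\in\mathcal H_x.$$

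Choosing $\lambda$ is the main (mild) obstacle, since this inequality is not automatic with $\lambda\equiv1$. Fix an auxiliary Riemannian metric and let $S_x$ be the compact set of unit $g_\intercal$-causal vectors in $\mathcal H_x$. By $g_\intercal<g'_\intercal$, the number $m(x):=-\max_{S_x}g'_\intercal(b,b)$ is strictly positive. Set $C(x):=\max_{S_x}\bigl(-g_\intercal(b,b)\bigr)\ge0$. Both quantities depend continuously on $x$, or at least are locally bounded in the right direction: $m$ is locally bounded below by a positive constant, and $C$ is locally bounded above. A partition of unity therefore produces a smooth $\lambda>0$ with $\lambda C<m$ everywhere. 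By homogeneity, the inequality then holds for all nonzero $g_\intercal$-causal $b\in\mathcal H_x$, and this finishes the proof.
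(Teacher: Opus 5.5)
Your proof is correct and follows essentially the paper's route. For $(3)\Rightarrow(1)$ you pass to the $g$-horizontal part of $v$ and use $g'(w,w)\ge g'_\intercal(w,w)$. For $(1)\Rightarrow(2)$ you take $g'=g'_\intercal+\lambda\, g(\cdot^\top,\cdot^\top)$, with a small smooth $\lambda>0$ obtained from local compactness bounds and a partition of unity.

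Your reduction via $k(a,a)\le -g_\intercal(b,b)$ is also the right way to make the key constant positive. The paper's $\alpha_p$ is an infimum over $h$-unit vectors with $g_\intercal(v,v)\le 0$, a set that contains unit vertical vectors, so as literally written $\alpha_p=0$. That argument only works once the constraint is replaced by $g(v,v)\le 0$, which is in effect what your estimate does.
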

\begin{proof}
$(1)\implies(2)$ \\
Suppose $g_\intercal<g_\intercal^\prime$ and let $g\in\mathcal{BL}(g_\intercal)$. It is not difficult to check that there exists a Riemannian metric $h$ on $M$ such that for all $v\in TM$ we have 
\begin{equation}\label{usolocal}   
g(v,v)=g_\intercal(v,v)+h(v^\top,v^\top),
\end{equation}
where $v^\top$ is the component of $v$ along $T\mathcal{F}$ in the decomposition of $TM$ produced by $h$. 

Now, pick a locally finite open covering $\mathcal{C}=\{U_i\}_{i\in I}$ of $M$ by precompact open sets, and consider for each $i\in I$ the compact set $\mathcal{K}_i \subset TM$ given by 
$$\mathcal{K}_i := \{v\in T_pM \, :\, p \in \overline{U}_i, h_p(v,v)=1, (g_{\intercal})_p(v,v)\leq 0\}.$$
The function $\Phi: v\in TM \mapsto g'_\intercal(v,v) \in \mathbb{R}$ is obviously continuous, and hence it attains a minimum 
$\varepsilon_i := \min\{g_\intercal(v,v) \, : \, v\in \mathcal{K}_i\}$
for each $i\in I$. Since $\Phi|_{\mathcal{K}_i}$ is strictly positive because $g_\intercal<g_\intercal^\prime$, we have $0< \varepsilon_i$ for each $i\in I$. Finally, fix a $\{\varphi_i\}_{i\in I}$ a smooth partition of unity subordinate to $\mathcal{C}$. 

Next, for each $i \in I$, define $f_i \in C^\infty(M)$ by 
$$f_i(p) := \left\{ \begin{array}{cc}
   \varepsilon_i \varphi_i(p),  & \text{if $p \in U_i$} \\
    0, & \text{if $p\notin supp \, \varphi_i $}
\end{array}\right. ,$$
and $f\in C^\infty(M)$ by 
$$f := \sum_{i\in I} f_i.$$
For each $p\in M$ define the number $$
\alpha_p:=\inf\{-g^\prime_{\intercal,p}(v,v)| v\in T_pM \text{ satisfies } h_p(v,v)=1, g_{\intercal,p}(v,v)\leq 0\}.
$$

We claim that $0< f(q) \leq \alpha_q$ for every $q\in M$. To see this, fix $p\in M$, and let $V\ni p$ be an open set for which the set 
$J:= \{i \in I \, : \, U_i \cap V \neq \emptyset\}$
is finite. Given any $v \in T_pM \text{ such that } h_p(v,v)=1, (g_{\intercal})_p(v,v)\leq 0$, we have
$$f(p) =\sum_{i\in J}\varepsilon_i \varphi_i(p) \leq \left (\sum _{i\in J}\varphi_i(p)\right)(g_\intercal)_p(v,v) \leq (g_\intercal)_p(v,v),$$
whence the claim follows. 

Define $g^\prime\in\mathcal{BL}(g^\prime_\intercal)$ by 
$$
g^\prime(v,w)=g^\prime_\intercal(v,w)+ \frac{f}{2}\cdot h(v^\top, w^\top), \forall v,w\in TM.
$$ 
To see that $g^\prime$ indeed works, let $v\in T_pM\setminus \{0\}$ be such that $g(v,v)\leq 0$. Then, $g_\intercal(v,v)\leq 0$, and we compute 
\begin{align*}
    g^\prime(v,v)&=g^\prime_\intercal(v,v)+\frac{f(p)}{2} h(v^\top, v^\top)\\
    &=\left[g'_\intercal\left(\frac{v}{\|v\|_h},\frac{v}{\|v\|_h}\right)+\frac{f(p)}{2} h\left(\frac{v^\top}{\|v\|_h}, \frac{v^\top}{\|v\|_h}\right)\right] \|v\|^2_{h}\\
    &\leq \left[g'_\intercal\left(\frac{v}{\|v\|_h},\frac{v}{\|v\|_h}\right)-\frac{1}{2} g'_\intercal\left(\frac{v}{\|v\|_h}, \frac{v}{\|v\|_h}\right)\right] \|v\|^2_{h}\\
    &= \frac{1}{2}g^\prime_\intercal\left(\frac{v}{||v||_h}, \frac{v}{||v||_h}\right)h(v, v) <0.  
\end{align*}
$(2)\implies(3)$ is immediate.\\

\noindent $(3)\implies(1)$ \\
Let $ g\in\mathcal{BL}(g_\intercal),g^\prime\in\mathcal{BL}(g^\prime_\intercal)$ be given such that $g<g^\prime$. Fix $p\in M$ and a nonvertical vector $v\in T_pM$ such that $g_\intercal(v,v)<0$. 

Write $v=v^\top+ v^\perp$ for the decomposition of $v$ into a vertical part $v^\top$ and horizontal part $v^\perp$ with respect to $g$. We then have $g(v^\perp,v^\perp) \equiv g_\intercal(v,v)\leq 0$, and by our assumption we have then $g'(v^\perp,v^\perp) <0$. Now, $g '(v^\perp,v^\perp)$ and $g'_\intercal(v^\perp,v^\perp)$ are related by an equation analogous to \eqref{usolocal}, whence it follows that $g'_\intercal(v^\perp,v^\perp)<0$; However, the $\mathcal{F}$-basic nature of $g'_\intercal$ means that 
$$g'_\intercal(v,v) = g'_\intercal(v^\perp,v^\perp)<0,$$
and we conclude that $g_\intercal < g'_\intercal$, as desired. 
\end{proof}

\begin{corollary}\label{cortansvstableimpliesstable}
If the Lorentzian foliation $(M,\mathcal{F},g_\intercal)$ is transversely stably causal, then for any $g\in\mathcal{BL}(g_\intercal)$ there exists a Lorentzian bundle-like metric $g'$ on $(M,\mathcal{F})$ such that $g<g'$, with the associated transverse metric $g'_\intercal$ the foliation $(M,\f,g'_\intercal)$ is transversely causal, and $(M,g')$ is a causal spacetime. In particular, $(M,g)$ is a stably causal spacetime. 
\end{corollary}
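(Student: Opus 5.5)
The proof combines Definition \ref{trstablecsldefi} with Theorem \ref{widen-the-cone-widen-the-wedge}, and then uses the fact quoted in Section \ref{sec:firstnotion} that transverse causality passes to associated bundle-like metrics.

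First, since $(M,\f,g_\intercal)$ is transversely stably causal, Definition \ref{trstablecsldefi} provides $g'_\intercal\in\mathcal{TL}(M,\f)$ with $g_\intercal<g'_\intercal$ and $(M,\f,g'_\intercal)$ transversely causal. By Remark \ref{rmkstablycausalimpliescausal}, we give $g'_\intercal$ the time orientation whose future wedges contain those of $g_\intercal$.

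Next, fix an arbitrary $g\in\mathcal{BL}(g_\intercal)$. The implication $(1)\Rightarrow(2)$ of Theorem \ref{widen-the-cone-widen-the-wedge} yields $g'\in\mathcal{BL}(g'_\intercal)$ with $g<g'$. In particular, $g'$ is a Lorentzian bundle-like metric whose associated transverse metric is $g'_\intercal$, and we already know the foliation $(M,\f,g'_\intercal)$ is transversely causal.

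It remains to show that $(M,g')$ is causal. This is the step that needs care. We invoke the fact recalled after Definition \ref{defi: transverse ladder}: if $(M,\f,g'_\intercal)$ is transversely causal, then $(M,\tilde g)$ is causal for every $\tilde g\in\mathcal{BL}(g'_\intercal)$. For this to apply, we must fix the time orientation of $g'$ compatibly, so that $g'$-future vectors lie in the transverse future wedges of $g'_\intercal$. We check this directly. Let $v$ be $g'$-causal and decompose it $g'$-orthogonally as $v=v^\top+v^\perp$. Then $g'(v,v)=g'_\intercal(v,v)+(\text{positive definite term in }v^\top)$, so $g'_\intercal(v,v)\le g'(v,v)\le 0$. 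Moreover, $v$ is non-vertical, since $g'$ is positive definite on $T\f$. Hence every $g'$-causal vector is transversely $g'_\intercal$-causal. The set of $g'$-causal vectors has two components, and each one lies in a single transverse wedge by connectedness. We choose as $g'$-future the component lying in the $g'_\intercal$-future wedge.

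With this orientation, a closed future-directed $g'$-causal curve would be a transversely causal curve returning to its own leaf, which contradicts the transverse causality of $(M,\f,g'_\intercal)$. So $(M,g')$ is causal.

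Finally, $g<g'$ together with the causality of $(M,g')$ is exactly the definition of stable causality of $(M,g)$ recalled in Section \ref{sec9}. Here we orient $g$ in the same way relative to $g_\intercal$, so that its future cones lie inside the future cones of $g'$.
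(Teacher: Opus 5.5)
Your proof is correct and takes the same route as the paper. The paper states this corollary without proof, as an immediate consequence of the implication $(1)\Rightarrow(2)$ of Theorem \ref{widen-the-cone-widen-the-wedge} together with the fact, recalled in Section \ref{sec:firstnotion}, that transverse causality of $(M,\f,g'_\intercal)$ forces every $g'\in\mathcal{BL}(g'_\intercal)$ to be causal. You also check explicitly that $g'$-causal vectors are non-vertical and transversely $g'_\intercal$-causal, so the time orientations of $g$ and $g'$ can be chosen compatibly with the transverse wedges; the paper leaves this detail implicit.
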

\qcd
\begin{rmk}\label{quitecutermk}
    {\em It is not difficult to construct a stably causal spacetime containing a Lorentzian foliation which is not transversely stably causal, so the converse of the previous corollary is false in general. Let 
    $$M:=\{(t,x,y)\in \mathbb{R}^3\, : \, x^2+y^2>1\}$$
    with the flat metric $g= -dt^2+dx^2+dy^2$. A time orientation is picked such that $\partial_t$ is future-directed. The spacetime $(M,g)$ thus defined is clearly stably causal, since, say, $f: (t,x,y) \in M \mapsto t \in \mathbb{R}$ is a temporal function thereon. The vector field 
    $$K= x\partial_y-y\partial _x + \partial_t$$
    is Killing, complete and spacelike, and hence generates an isometric $\mathbb{R}$-action with spacelike orbits on $(M,g)$. These orbits are leaves of a foliation $\f$ with respect to which $g$ is bundle-like. (See Example \ref{homogeneousexe} below.) Hence it naturally defines a Lorentzian foliation which is not even transversely chronological: the timelike curve $\gamma(s)=(s,2,0)$ intersects the leaf 
    $$L=\{(\lambda,2\cos \lambda,2\sin \lambda): \lambda \in \mathbb{R}\}$$
    whenever $s=\lambda = 2\pi k$ with $k \in \mathbb{Z}$. 
    } 
\end{rmk}
\section{Whitney topologies and the space of transverse Lorentz metrics}\label{sec:whitneytopology}
Another standard characterization of stable causality on spacetimes is given via the (strong) Whitney topologies on the space of Lorentzian metrics on a manifold. Specifically, recall that a spacetime $(M,g)$ is stably causal if and only if there exists an open neighborhood $U(g)$ of $g$ in the space of Lorentzian metrics on $M$ with the Whitney $C^0$ topology so that for any $g' \in U(g)$ the spacetime $(M,g')$ is causal. 

In this section we study an analogously defined space of all possible \emph{transverse} Lorentzian metrics on a given foliated manifold $(M, \mathcal F).$  Many different topologies may be chosen to study spaces of metrics according to the geometric or analytic context one may be interested in, but in the standard Lorentzian setting the strong Whitney topologies have been shown to be particularly useful, both for mathematical and physical reasons (see \cite{lerner,lerner72}). Therefore, we will adapt those topologies here to define the space of transverse Lorentz metrics as a topological space.  

Denote by $\mathcal{TL}^r(M,\mathcal{F})$ the set of all \textit{transverse} Lorentzian metrics of class $C^r$, and by $\mathcal{BL}^r(M,\f)$ the set of all Lorentzian \textit{bundle-like} metrics on $(M,\mathcal{F})$ of class $C^r$, (with $1 \leq r \leq \infty$) on $(M,\mathcal{F})$. One is usually interested in $r = \infty,$ and in that case we shall drop the superscript and denote it simply as $\mathcal{TL}(M,\mathcal{F})$. We leave some of the more technical discussion regarding Whitney topologies for $\mathcal{TL}^r(M,\mathcal{F})$ to appendix \ref{appendixA}, where in particular we prove the following:

\begin{proposition}\label{small}
	For each $1\leq r\leq\infty$ the set $\mathcal{TL}^r(M,\mathcal{F})$ of transverse Lorentzian metrics on $(M,\f)$ of class $C^r$ is a Baire space in the strong Whitney $C^r$ topology.
\end{proposition}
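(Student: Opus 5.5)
The plan is to realize $\mathcal{TL}^r(M,\mathcal{F})$ as an open subset of a closed subspace of the space $\Gamma^r(S^2T^*M)$ of $C^r$ symmetric $(0,2)$-tensor fields with the strong Whitney $C^r$ topology, and then invoke the standard fact that this space is Baire. Concretely, let $\mathcal{B}^r\subset \Gamma^r(S^2T^*M)$ denote the set of $C^r$ \emph{$\mathcal{F}$-basic} symmetric tensors, i.e. those $T$ with $T(V,\cdot)=0$ and $\mathcal{L}_VT=0$ for all vertical $V$. The key steps are then the following:
\begin{enumerate}
\item Show that $\mathcal{B}^r$ is a closed linear subspace of $\Gamma^r(S^2T^*M)$.
\item Show that $\mathcal{TL}^r(M,\mathcal{F})$ is open in $\mathcal{B}^r$.
\item Deduce the Baire property.
\end{enumerate}

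\textbf{Closedness of $\mathcal{B}^r$.} Horizontality is a pointwise linear condition, so it is closed already in the $C^0$ topology. For holonomy invariance with $r\ge 1$, I would avoid Lie derivatives, which cost a derivative. Instead, in a foliated chart $(x,y)$ with plaques $\{y=\text{const}\}$, a horizontal tensor is basic if and only if its coefficients $T_{ij}(x,y)\,dy^i dy^j$ are independent of $x$. This is a $C^0$-closed condition on coefficients. Since strong Whitney convergence implies local uniform convergence on each chart, the limit of basic tensors is basic. The space $\Gamma^r(S^2T^*M)$ with the strong Whitney $C^r$ topology is a Baire space; this is classical, e.g. via complete metrizability of $C^r(M,E)$ on a fixed exhaustion argument, or Hirsch's theorem. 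A closed subset of a Baire space is not Baire in general, so I would instead argue that $\mathcal{B}^r$ is itself Baire by repeating the standard proof directly. That proof shows that any countable intersection of open dense sets is dense, using nested Whitney neighborhoods with shrinking tolerance functions and a pointwise $C^r$-Cauchy limit. The limit obtained this way stays in $\mathcal{B}^r$ by closedness.

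\textbf{Openness of $\mathcal{TL}^r$ in $\mathcal{B}^r$.} Fix $g_\intercal\in\mathcal{TL}^r$ and an auxiliary Riemannian metric $h$ with $h$-horizontal bundle $\mathcal{H}$. A basic tensor $T$ is determined by $T|_{\mathcal{H}}$, and $T$ is a transverse Lorentzian metric exactly when $T|_{\mathcal{H}_x}$ is nondegenerate of index $1$ for every $x$. The kernel condition $\ker T_x = T\mathcal{F}_x$ is then automatic. Nondegeneracy with index $1$ is an open condition on $\mathrm{Sym}^2(\mathcal{H}_x^*)$. By choosing a positive continuous function $\delta$ such that $\|T-g_\intercal\|_h<\delta$ pointwise preserves this condition, we obtain a strong $C^0$, hence $C^r$, neighborhood of $g_\intercal$ inside $\mathcal{TL}^r$. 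This is the usual argument for $\mathrm{Lor}(M)$, now applied on $\mathcal{H}$; $\delta$ can be built from a locally finite cover by precompact sets, as in the proof of Theorem \ref{widen-the-cone-widen-the-wedge}. Finally, an open subset of a Baire space is Baire, which concludes.

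\textbf{Main obstacle.} The delicate point is step (1) together with its Baire consequence. One must check that the Cauchy-limit argument for the strong Whitney topology, which is not metrizable when $M$ is noncompact, preserves basicness. An alternative I would try first is to identify $\mathcal{B}^r$ with the space of holonomy-invariant $C^r$ sections of $S^2\nu\mathcal{F}^*$, as the paper announces for Appendix \ref{appendixB}, and note that this is a closed subspace in the Whitney sense. Then $\mathcal{B}^r$ inherits the Baire property by exactly the same proof that works for $\Gamma^r(E)$.
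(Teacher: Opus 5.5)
Your argument is correct and is essentially the paper's proof. Both rest on three facts: a set of sections closed in the weak $C^r$ topology is Baire in the strong $C^r$ topology (Hirsch/Gravesen); index-one nondegeneracy in the normal directions is an open condition; and open subsets of Baire spaces are Baire. The differences are only the order of the steps and how closedness is checked. The paper first passes to the open set $\mathrm{Lor}^r(\mathcal S)$ inside the sections of the horizontal subbundle $\mathcal S$, then cuts out holonomy invariance as $\bigcap_V \mathcal L_V^{-1}(0)$ using continuity of $\mathcal L_V$ from weak $C^r$ to weak $C^0$. You instead impose both closed (basic) conditions first, checking them via $x$-independence of the coefficients in foliated charts, and take the open Lorentzian condition last, which makes the final Baire deduction slightly cleaner.
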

\qcd

We can also present an alternative but equivalent way of describing the space of transverse Lorentzian metrics that shall prove relevant when discussing the relation between transverse causal stability as given in Definition \ref{trstablecsldefi} and another notion of causal stability understood in the sense of open subsets of the space of transverse metrics preserving transverse causality as in subsection \ref{sec:intervaltop} below. 

Consider the normal bundle $\nu\f=TM/ T\mathcal F$ of the foliated manifold $(M,\mathcal F),$ and let $\mathrm{Sym}^{(0,2)}(\nu\mathcal F)$ denote the fiberwise $(0,2)$ symmetric tensors over $\nu\f.$   Following \cite[section 2.5]{us},   given $B \in \Gamma^r(\mathrm{Sym}^{(0,2)}(\nu\mathcal F))$ we can define its Lie derivative in the direction of $V \in \mathfrak{X}(\mathcal F)$ as 
\[
\mathcal L_V B(\overline X, \overline Y) =  V(B(\overline X, \overline Y)) -  B(\mathcal L_V \overline X, \overline Y) - B(\overline X, \mathcal L_V \overline  Y), \quad \overline X, \overline Y \in \Gamma^r(\nu\mathcal F),
\]
where $\mathcal L_V \overline X = \overline{[V,X]}.$ We define the \textit{holonomy-invariant $(0,2)$-symmetric $C^r$ tensor fields of} $\nu\mathcal F$ as those sections $B \in \Gamma^r(\mathrm{Sym}^{(0,2)}(\nu\mathcal F))$ satisfying $\mathcal L_V B = 0$ for all $V \in \mathfrak{X}(\mathcal F).$ Denoting $\mathrm{Lor}^r_\intercal(\nu\mathcal F)\subset\Gamma^r(\mathrm{Sym}^{(0,2)}(\nu\mathcal F)) $ the subset of $C^r$ holonomy-invariant Lorenzian fiber metrics of $\nu\mathcal F$, we show in Appendix \ref{appendixB}:
\begin{proposition}\label{smallprime}
$\mathcal{TL}^r(M,\mathcal{F})$ is homeomorphic to $\mathrm{Lor}^r_\intercal(\nu\mathcal F)$ in their respective strong Whitney $C^r$ topologies.	
\end{proposition}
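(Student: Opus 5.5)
The homeomorphism I would use is the map induced by the quotient projection $p:TM\to\nu\f$. Define $\Psi:\mathcal{TL}^r(M,\f)\to\mathrm{Lor}^r_\intercal(\nu\f)$ by
\[
\Psi(g_\intercal)(\overline X,\overline Y):=g_\intercal(X,Y),
\]
where $X,Y$ are any lifts of $\overline X,\overline Y$. This is well defined because $g_\intercal$ is horizontal: its kernel is exactly $T\f$. Since $g_\intercal$ has index $1$ and kernel $T\f$, the form $\Psi(g_\intercal)$ is a nondegenerate Lorentzian fiber metric on $\nu\f$. For holonomy invariance, take a projectable $X$. Then $\mathcal L_V\overline X=\overline{[V,X]}$, and horizontality together with $\mathcal L_Vg_\intercal=0$ give
\[
\mathcal L_V\Psi(g_\intercal)(\overline X,\overline Y)=(\mathcal L_Vg_\intercal)(X,Y)=0 .
\]
For general sections one argues $C^\infty(M)$-tensorially.

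The inverse is $B\mapsto p^\ast B$. Its kernel is exactly $T\f$, and it has index $1$. It is horizontal by construction, and holonomy invariance follows by running the same identity backwards. So $\Psi$ is a bijection, and it preserves the regularity class $C^r$ because $p$ is a smooth bundle map.

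For continuity in the strong Whitney $C^r$ topologies, I would work in foliated charts $(x^1,\dots,x^{n-q},y^1,\dots,y^q)$. There $T\f$ is spanned by the $\partial_{x^i}$, and $\nu\f$ is trivialized by the classes $\overline{\partial_{y^a}}$. In these coordinates both objects have the same local expression $g_{ab}(x,y)\,dy^a dy^b$. The $g_{ab}$ are in fact independent of $x$, by basicness, but that is not needed here. So within a chart the $C^r$ seminorms of $g_\intercal$ and of $\Psi(g_\intercal)$ over a compact set coincide. The $dx$ components of $g_\intercal$ vanish identically and contribute nothing.

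The basic Whitney neighborhoods are defined with respect to a locally finite cover by such charts, compact sets $K_i$ and positive tolerances $\varepsilon_i$. Choosing the same cover of foliated charts for both bundles, $\Psi$ therefore maps basic neighborhoods onto basic neighborhoods. It is then open and continuous, and hence a homeomorphism.

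The main obstacle is this last step. I would need to check that the strong Whitney $C^r$ topology on each space, as set up in Appendix A, is independent of the choice of locally finite atlas, and can be computed with foliated charts and the induced trivializations of $\nu\f$. The topology on $\mathcal{TL}^r$ is the subspace topology from $\Gamma^r(\mathrm{Sym}^{(0,2)}(T^\ast M))$, so I would have to confirm that the restriction to horizontal tensors gives exactly the seminorms on the $dy^a dy^b$ coefficients. This uses the standard fact that changing charts alters $C^r$ norms on compacts only by bounded multiplicative constants, depending on derivatives of the transition maps up to order $r$. Those constants can be absorbed into the $\varepsilon_i$.
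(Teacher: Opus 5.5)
Your proof is correct and follows essentially the same route as the paper. The paper's map is the bundle isomorphism $\Phi:\mathcal S\to\mathrm{Sym}^{(0,2)}(\nu\f)$, $b\mapsto\bar b$; its induced map $\Phi_\#$ on $C^r$ sections is a homeomorphism, and its restriction to $\mathcal{TL}^r(M,\f)$ is exactly your $\Psi$. Your foliated-chart computation (identical $dy^a dy^b$ coefficients, with chart changes absorbed into the $\varepsilon_i$) is the explicit version of the paper's appeal to the general fact that a smooth vector bundle isomorphism induces a homeomorphism of Whitney section spaces, and your check that the image is exactly the holonomy-invariant Lorentzian fiber metrics fills in a step the paper leaves implicit.
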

\qcd

%
%

 \subsection{Interval topology on the set of Lorentz metrics on vector bundles}\label{sec:intervaltop}

When considering the causal theory on spacetimes, it is standard to analyze the \textit{conformal class} of metrics instead of the metrics themselves, since the causal structure is conformally invariant. Notice that the relation $g < h$ is unaltered by conformal scalings of the Lorentzian metrics $g$ and $h$ on a manifold $M$. Denoting by $\bm{g}$ the conformal class of a metric $g,$ the strict partial order $\bm{g} < \bm{h}$ is well-defined.
An \emph{interval} of conformal metrics is the set 
\begin{equation}\label{intervals2}
	(\bm g, \bm h) = \{ \bm k : \bm g < \bm k < \bm h  \}. 
\end{equation} 
The \emph{interval topology} is a topology on the set $\mathrm{Con}(M)$ of conformal Lorentzian metrics of $M$ generated by considering all intervals \eqref{intervals2} as a subbasis. Surprisingly \cite{beem}, the interval topology coincides with the quotient topology on $\mathrm{Con}(M)$ induced by the projection $g \in \mathrm{Lor}(M) \mapsto \bm{g} \in \mathrm{Con}(M)$ where $\mathrm{Lor}(M)$ is the set of smooth Lorentzian metrics on $M$ endowed with the $C^0$ Whitney topology. (For simplicity, we shall also refer to that quotient topology as the $C^0$ topology on $\mathrm{Con}(M)$ if there is no risk of confusion.) Thus, $(M,g)$ is causally stable if and only if there exists a $C^0$ open set $U \ni \bm g$ such that if $\bm{h} \in U,$ then $(M,h)$ is causal.

We want to consider an analogous development for Lorentzian fiber metrics in $\mathrm{Lor}(\nu\mathcal F),$  the subspace of holonomy-invariant metrics $\mathrm{Lor}_\intercal(\nu\mathcal F)\subset\mathrm{Lor}(\nu\mathcal F)$ and the conformal classes for these spaces. The equivalence of interval topology and $C^0$ topology is well described in \cite{ lerner, lerner72} for Lorentzian metrics on $M,$  and by inspecting those papers, it turns out that little effort is needed to adapt Lerner's techniques to the space of Lorentzian fiber metrics over a vector bundle $E \to M,$ the standard case being recovered by setting $E = TM$; hence, we review of these ideas here.

Let $\pi: E \to M$ be a smooth vector bundle, and $ \widehat \pi :\widehat E \to M$ the fiber bundle originated by removing the image of the zero section of $E.$  At each fiber of $\widehat E$  we consider the \textit{conformal equivalence:} $v, w \in \widehat E_x$ are equivalent iff $v = \lambda w$ for some $\lambda >0.$ By considering these fiberwise relations on local trivializations of $\widehat E,$  one constructs  the \emph{conformal bundle} of $E,$ denoted by $E^C,$ with class projection $\mathcal Q : \widehat E \to E^C$ being a fiber-preserving smooth surjective submersion   (c.f. \cite[p. 21]{lerner}, \cite[pp 11-12]{lerner72}). If $E$ has rank $k,$ the fibers of $E^C$ are diffeomorphic to the sphere $\mathbb{S}^{k-1}.$ 
If $\sigma$ is a section of $\widehat E,$ we denote the section $\mathcal Q \circ \sigma$ of $E^C$ by $\bm \sigma.$  We have the following properties (p. 12, corollaries 1 and 2 in \cite{lerner72}):

\begin{proposition}\label{prop:conformalprop}
	If $\sigma, \eta$ are two $C^r$ sections of $\widehat E,$ then $\bm \sigma = \bm \eta$ iff there exists $f \in C^r(M)$ positive such that $\sigma = f \eta.$ Also, any $C^r$ section of $E^C$   can be lifted to a $C^r$ section of $\widehat E,$ that is, if $\omega \in \Gamma^r(E^C),$ there exists $\sigma \in \Gamma^r(\widehat E)$ with $\mathcal Q \circ \sigma = \omega.$ 
\end{proposition}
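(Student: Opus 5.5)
The plan is to work locally and then globalize, using local trivializations of $\widehat E$ and the fiberwise structure of $\mathcal Q$. Fix a local trivialization $\widehat E|_U \cong U\times(\mathbb{R}^k\setminus\{0\})$. This induces a trivialization $E^C|_U\cong U\times \mathbb{S}^{k-1}$, and under it $\mathcal Q$ becomes the radial normalization $(x,v)\mapsto (x, v/|v|)$, where $|\cdot|$ is the Euclidean norm in the chart. All claims are local or can be patched, so these charts are the basic tool. The approach follows Lerner's argument, with $TM$ replaced by an arbitrary bundle $E$.

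For the first assertion, the direction ``$\Leftarrow$'' is immediate: if $\sigma=f\eta$ with $f>0$, then $\sigma_x$ and $\eta_x$ lie in the same ray for every $x$, so $\mathcal Q\circ\sigma=\mathcal Q\circ\eta$.

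For ``$\Rightarrow$'', the equality $\bm\sigma=\bm\eta$ gives at each $x$ a unique $f(x)>0$ with $\sigma_x=f(x)\eta_x$. The remaining point is the $C^r$ regularity of $f$. Choose a $C^r$ (indeed smooth) fiber metric $\langle\cdot,\cdot\rangle$ on $E$, which exists by a partition of unity. Then
$$f=\frac{\langle\sigma,\eta\rangle}{\langle\eta,\eta\rangle}.$$
This is a quotient of $C^r$ functions with nonvanishing denominator, since $\eta$ is a section of $\widehat E$. Hence $f\in C^r(M)$, and $f$ is positive by construction.

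For the lifting assertion, again fix a smooth fiber metric on $E$. Let $S(E)=\{v\in E:\langle v,v\rangle=1\}$ be the unit sphere bundle, a smooth subbundle of $\widehat E$. The restriction $\mathcal Q|_{S(E)}:S(E)\to E^C$ is a fiber-preserving bijection, since each ray meets the unit sphere exactly once. In the local trivializations above, it is smooth with smooth inverse $[v]\mapsto v/\|v\|$; to see this, use a local orthonormal frame, so that $\mathcal Q|_{S(E)}$ becomes the identity of $U\times\mathbb{S}^{k-1}$ up to the smooth change of frame. Thus $\mathcal Q|_{S(E)}$ is a diffeomorphism, with inverse $\Psi:E^C\to S(E)\subset \widehat E$. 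Given $\omega\in\Gamma^r(E^C)$, set $\sigma:=\Psi\circ\omega$. This is $C^r$ as a composition of a $C^r$ map with a smooth one, and it satisfies $\mathcal Q\circ\sigma=\omega$.

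The step needing the most care is verifying that $\Psi$ is smooth globally, i.e.\ that the conformal bundle structure built from the local trivializations of $\widehat E$ agrees with the one induced by $S(E)$. I would handle this by showing that the transition maps of $E^C$ are the projectivized (positive-ray) actions of the $GL(k)$ transition functions of $E$. In a local orthonormal frame, both $\mathcal Q|_{S(E)}$ and its inverse are then explicitly given by normalization formulas, which are smooth.
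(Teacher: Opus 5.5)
Your proof is correct. The paper gives no argument of its own here; it simply quotes the proposition from Lerner's thesis (p.~12, Corollaries~1 and~2). Your proposal is therefore a self-contained justification rather than a reconstruction of a proof in the text.

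Conceptually, your auxiliary fiber metric does one thing: it trivializes the positive-ray bundle $\mathcal Q:\widehat E\to E^C$. The map $v\mapsto(\mathcal Q(v),\|v\|)$ is a diffeomorphism $\widehat E\to E^C\times\mathbb{R}_{>0}$ with inverse $(\xi,t)\mapsto t\,\Psi(\xi)$. Both assertions follow at once from this: in the first you may take $f=\|\sigma\|/\|\eta\|$, and in the second $\sigma=\Psi\circ\omega$.

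The step you single out as delicate is essentially automatic. A local orthonormal frame is itself one of the local trivializations of $E$ from which $E^C$ gets its smooth structure; its compatibility with the others is exactly your transition-map observation. In the induced chart, $\mathcal Q|_{S(E)}$ is the identity of $U\times\mathbb{S}^{k-1}$.

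Your route has a further advantage: it applies verbatim to positively homogeneous open subbundles such as $\mathcal N_1(E)$. A positive rescaling of a Lorentzian form is still Lorentzian, so the normalized lift of a section of $\mathcal N_1(E)^C$, taken with respect to a fiber metric on $\mathrm{Sym}^{(0,2)}(E)$, stays in $\mathcal N_1(E)$. This is precisely the form in which the proposition is needed later, for the surjectivity of $\mathcal Q_\#:\mathrm{Lor}^r(E)\to\mathrm{Con}^r(E)$.
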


If $F$ is any subbundle of $E$ disjoint from the zero section, we can naturally adapt the conformal construction done above for $F$, and $F^C$ will have a fiber diffeomorphic to some submanifold of a sphere.  Here we will be interested in the Lorentzian bundle $\mathcal N_1(E)$ of fiberwise $(0,2)$-symmetric nondegenerate tensors of index one on $E$, whose space of class $C^r$ sections we denote by $\mathrm{Lor}^r(E)$ ($\mathrm{Lor}(E)$ for $r = \infty$). The former is an open subbundle and disjoint from the image of the zero section of $\mathrm{Sym}^{(0,2)}(E)$. We can then consider its conformal bundle $\mathcal N_1(E)^C,$ which we will call the \emph{conformal Lorentzian bundle} of $E$ and we will denote its space of class $C^r$ sections by $\mathrm{Con}^r(E)$ (again, $\mathrm{Con}(E)$ for $r = \infty$).



The $C^0$ topology is the most relevant for causal stability. Proposition 2.4 in \cite{lerner} can be straightforwardly adapted to obtain the following.

\begin{proposition}
	$\mathcal Q_\# : \mathrm{Lor}^r(E) \to \mathrm{Con}^r(E)$ is a continuous surjection. When considering $C^0$ topologies, it is a quotient mapping.
\end{proposition}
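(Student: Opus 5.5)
The proof has three parts: continuity of $\mathcal Q_\#:\sigma\mapsto \mathcal Q\circ\sigma$ in each $C^r$ topology, surjectivity, and the quotient property in $C^0$. All three reduce to fiberwise local statements about the smooth submersion $\mathcal Q:\mathcal N_1(E)\to\mathcal N_1(E)^C$, as in Lerner's argument for $E=TM$.

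\textbf{Continuity.} Recall that a basic strong Whitney $C^r$ neighborhood of a section $\omega$ of $\mathcal N_1(E)^C$ is determined by a locally finite family of charts $(U_i,K_i)$ with $K_i\subset U_i$ compact, together with tolerances $\epsilon_i>0$ bounding the $r$-jets of $\omega'-\omega$ over $K_i$. In local trivializations of $E$ we can write $\mathcal Q$ explicitly. On $U_i\times \mathcal N_1(\mathbb R^k)$ the cone $\mathcal N_1(\mathbb R^k)$ is open and invariant under positive scalings, so we may take the normalization
\[
s\mapsto s/|s|,
\]
where $|\cdot|$ is the Euclidean norm on symmetric matrices. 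This is a smooth submersion onto an open subset of a sphere. Composition with a fixed smooth map, acting on the fiber variable over compact sets, is continuous in the strong $C^r$ topology. This is the standard fact that $\sigma\mapsto \Phi\circ\sigma$ is continuous for smooth fiber-preserving $\Phi$, proved via the chain rule and uniform bounds of the derivatives of $\Phi$ on compact neighborhoods of $\sigma(K_i)$. Continuity of $\mathcal Q_\#$ follows.

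\textbf{Surjectivity.} This is exactly the lifting statement of Proposition \ref{prop:conformalprop}, applied to the subbundle $F=\mathcal N_1(E)$. The only point to check is that the lifted section of $\widehat{\mathrm{Sym}^{(0,2)}(E)}$ takes values in $\mathcal N_1(E)$. It does, because $\mathcal N_1(E)$ is saturated under positive scalings, so it equals the full $\mathcal Q$-preimage of $\mathcal N_1(E)^C$.

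\textbf{Quotient property in $C^0$.} It suffices to show that $\mathcal Q_\#$ is open in the $C^0$ topology. Indeed, a continuous open surjection is a quotient map. Let $g\in\mathrm{Lor}^r(E)$ and let $W$ be a basic $C^0$ neighborhood of $g$, defined by a continuous positive function $\delta$ through $|g'-g|<\delta$ pointwise, measured with some auxiliary fiber norm. We must find a $C^0$ neighborhood of $\bm g$ contained in $\mathcal Q_\#(W)$. The natural candidate comes from a local section of the submersion $\mathcal Q$. Normalize the representative by setting $\tilde g=g/|g|$, where $|g|$ is the pointwise norm relative to a fixed bundle metric on $\mathrm{Sym}^{(0,2)}(E)$. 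Then the map
\[
\bm\omega\mapsto \tau(\bm\omega):=\text{the unit-norm representative of }\bm\omega
\]
is a continuous fiber-preserving map $\mathcal N_1(E)^C\to\mathcal N_1(E)$ with $\mathcal Q\circ\tau=\mathrm{id}$. In fact it is smooth, since $\mathcal N_1(E)^C$ is identified with the unit-sphere bundle intersected with $\mathcal N_1(E)$.

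Given $\bm\omega$ $C^0$-close to $\bm g$, set $g':=|g|\,\tau(\bm\omega)$. This is a $C^r$ section whenever $\bm\omega$ is, because $|g|$ is $C^r$ and positive. It satisfies $\mathcal Q_\# g'=\bm\omega$, and pointwise
\[
|g'-g| = |g|\,\bigl|\tau(\bm\omega)-\tau(\bm g)\bigr|.
\]
So choose the $C^0$ neighborhood of $\bm g$ to consist of those $\bm\omega$ with
\[
d\bigl(\tau(\bm\omega),\tau(\bm g)\bigr)<\delta/|g|
\]
in the sphere-bundle metric. Every such $\bm\omega$ lies in $\mathcal Q_\#(W)$. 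For a general open set $W'\ni g$, apply the same argument at each of its points.

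\textbf{Main obstacle.} The step requiring the most care is this openness argument. One has to verify that the strong $C^0$ topology on sections of the conformal bundle is described by pointwise distances with a continuous positive tolerance function. One also has to verify that the global section $\tau$ exists, which uses that the fiberwise norm makes $\mathcal N_1(E)^C$ a genuine sub-bundle of the sphere bundle. For $r\geq1$ openness fails in general, since rescaling by $|g|$ does not control derivatives uniformly. That is why the statement asserts the quotient property only for $C^0$.
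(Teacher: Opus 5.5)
Your proposal is correct. The paper gives no argument of its own here; it only asserts that Lerner's Proposition 2.4 (the case $E=TM$) adapts straightforwardly. Your write-up is therefore a self-contained substitute for that citation.

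The device doing all the work is the global smooth section $\tau:\mathcal N_1(E)^C\to\mathcal N_1(E)$, obtained by normalizing with a fiber metric on $\mathrm{Sym}^{(0,2)}(E)$. With it, each part is immediate:
\begin{itemize}
\item Continuity is left composition with the smooth map $\mathcal Q$.
\item Surjectivity holds because $\tau\circ\omega$ already lifts $\omega$, so you do not even need Proposition~\ref{prop:conformalprop}.
\item Openness follows from $g'=|g|\,\tau(\bm\omega)$ together with $|g'-g|=|g|\,|\tau(\bm\omega)-\tau(\bm g)|$.
\end{itemize}
For the openness step you only need your candidate set to be open. That is immediate from the description of strong $C^0$ neighborhoods as $\{\omega:\omega(M)\subset U\}$ with $U$ open in the total space, so the full pointwise-distance characterization you flag as an obstacle is not required. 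The citation buys brevity; your route makes explicit why passing from $TM$ to an arbitrary vector bundle costs nothing.

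One correction: your closing claim that openness fails for $r\geq 1$ is false, and your own construction shows this.
\begin{itemize}
\item Fix $g$ and set $s_g:\bm\omega\mapsto |g|\,(\tau\circ\bm\omega)$. This map is continuous in every strong Whitney $C^r$ topology: it is left composition with the smooth map $\tau$, followed by multiplication by the fixed positive $C^r$ function $|g|$, which is a continuous linear operation by the Leibniz rule.
\item It satisfies $s_g(\bm g)=g$ and $\mathcal Q_\#\circ s_g=\mathrm{id}$. Hence for every open $W\ni g$, the set $s_g^{-1}(W)$ is an open neighborhood of $\bm g$ contained in $\mathcal Q_\#(W)$.
\item More directly, $\tau_\#:\bm\omega\mapsto\tau\circ\bm\omega$ is a continuous right inverse of $\mathcal Q_\#$. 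A continuous surjection with a continuous right inverse is always a quotient map, since $U=\tau_\#^{-1}(\mathcal Q_\#^{-1}(U))$.
\end{itemize}
So $\mathcal Q_\#$ is open, and a quotient map, in every $C^r$ topology. The restriction to $C^0$ in the statement is not forced by any failure at higher $r$. This does not affect your proof of the proposition as stated, but the remark should be removed.
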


The notion of causal cone widening and intervals of Lorentzian fiber metrics is completely analogous to the discussion at the start of this section. To define the interval topology, one must require all intervals to be nonempty. This is done in \cite{lerner72}, pp. 29 - 30, where even more is obtained: the interval $(g,h)$ is a convex cone in $\mathrm{Lor}^r(E)$.

\begin{proposition}
	Let $g,h \in \mathrm{Lor}^r(E)$ with $\bm g < \bm h.$  For $\alpha, \beta >0$ we have $\alpha g + \beta h \in  \mathrm{Lor}^r(E) $ and  $\bm g < \bm{\alpha g+ \beta h} < \bm h,$  where $\bm{\alpha g+ \beta h}$ is the conformal class of $\alpha g + \beta h$. Moreover if $\bm l, \bm k \in (\bm g, \bm h)$, choosing representatives  $k \in \bm k$ and $l \in \bm l$,  then $\alpha l + \beta k \in  \mathrm{Lor}^r(E)$ and $\bm g < \bm{\alpha l+ \beta k} < \bm h$. 
\end{proposition}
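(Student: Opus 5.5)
The argument reduces to fiberwise linear algebra of Lorentzian forms on a single vector space $E_x$ (of rank $k\geq 2$), followed by a check of regularity and well-definedness on conformal classes. Write $g<h$ to mean: for every nonzero $v\in E_x$ and every $x\in M$, $g(v,v)\leq 0$ implies $h(v,v)<0$. Since this relation, and the condition $\bm g<\bm h$, depend only on conformal classes, everything below may be stated for representatives; positive rescalings of $g,h,k,l$ never change the conclusions.

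\emph{Step 1 (nondegeneracy and index of $\alpha g+\beta h$).} Fix $x$ and pick a $g$-timelike vector $u$. Then $h(u,u)<0$, so $(\alpha g+\beta h)(u,u)<0$, and the index of $\alpha g+\beta h$ is at least one. For the upper bound, take the $g$-orthogonal complement $W$ of $u$; it is $g$-spacelike. On $W\setminus\{0\}$ we have $g>0$. We need $\alpha g+\beta h$ to be positive definite on some hyperplane. The point is that $h$ is positive on the set where $h$ is not timelike, and this set lies inside the set where $g$ is spacelike. Equivalently, the $h$-causal cone contains the $g$-causal cone.

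The cleanest route I would use is the standard lemma that $g<h$ is equivalent to the existence of a covector-free characterization: there is $c>0$ with $h-cg$ negative on the $g$-causal cone. I would instead argue via simultaneous diagonalization on a two-plane. Suppose $\alpha g+\beta h$ had a $2$-dimensional subspace $P$ on which it is negative semidefinite. On $P$, $g$ is Lorentzian, degenerate or definite. If $g$ is not positive definite on $P$, there is $0\neq v\in P$ with $g(v,v)\leq 0$, hence $h(v,v)<0$. Choose $w\in P$ independent from $v$. A continuity and intermediate-value argument on the circle of directions in $P$, using that $\{g\leq 0\}\subset\{h<0\}$, shows that $P$ contains a direction where both $g$ and $h$ are $\geq 0$ with one strictly positive. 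This gives $(\alpha g+\beta h)>0$ there, a contradiction. If $g$ is positive definite on $P$, the same circle argument applies with the roles of sign regions compared. Nondegeneracy then follows from the same dichotomy applied to a putative kernel vector. So $\alpha g+\beta h$ has index exactly $1$ pointwise. Smoothness ($C^r$) is immediate, so $\alpha g+\beta h\in\mathrm{Lor}^r(E)$.

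\emph{Step 2 (the interval inclusions).} If $g(v,v)\leq 0$ with $v\neq0$, then $h(v,v)<0$, so $(\alpha g+\beta h)(v,v)<0$. Hence $\bm g<\bm{\alpha g+\beta h}$.

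Conversely, suppose $(\alpha g+\beta h)(v,v)\leq 0$ and $v\neq 0$. If $g(v,v)\leq0$, then $h(v,v)<0$ and we are done. If $g(v,v)>0$, then $\beta h(v,v)\leq-\alpha g(v,v)<0$. Either way $h(v,v)<0$, so $\bm{\alpha g+\beta h}<\bm h$.

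\emph{Step 3 (convexity of the interval).} Let $\bm g<\bm k,\bm l<\bm h$. For $v\neq0$ with $g(v,v)\leq 0$, both $k(v,v)<0$ and $l(v,v)<0$, so $(\alpha l+\beta k)(v,v)<0$.

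If $(\alpha l+\beta k)(v,v)\leq0$, then at least one of $k(v,v),l(v,v)$ is $\leq 0$. Since both $k$ and $l$ lie below $h$, this forces $h(v,v)<0$.

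It remains to show $\alpha l+\beta k$ is Lorentzian. Here $k$ and $l$ need not be ordered with respect to each other, so Step 1 does not apply directly. I would redo its argument using only the following two facts:
\begin{itemize}
\item both $k$ and $l$ are negative on the $g$-causal cone, which gives index $\geq1$;
\item both $k$ and $l$ are positive outside the $h$-causal cone.
\end{itemize}
Any $2$-plane on which $\alpha l+\beta k\leq 0$ must then lie in the closed $h$-causal region. This is impossible, because a Lorentzian form admits no $2$-dimensional subspace of causal vectors.

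The main obstacle is making the index count in Step 1 rigorous. I expect the final observation, that there is no $2$-plane inside the $h$-causal set, to be the efficient tool, and it also handles Step 1 with $k=g$ and $l=h$.
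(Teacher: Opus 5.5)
Your proof is correct, and it supplies something the paper does not. The paper gives no argument for this proposition; it only points to pp.~29--30 of Lerner's thesis and asserts that Lerner's treatment of $E=TM$ adapts to arbitrary bundles. Your argument is purely fiberwise linear algebra and never uses that $E$ is a tangent bundle, so it actually substantiates that adaptation. It also works unchanged if $\alpha,\beta$ are positive $C^r$ functions.

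Its core is the pair of facts you isolate at the end. For $Q=\alpha l+\beta k$ (the case $k=g$, $l=h$ included), each summand is positive on $h$-spacelike vectors, so $Q(v,v)\le 0$ forces $h(v,v)\le 0$. And a Lorentzian form is $\le 0$ on no $2$-plane. Steps 2 and 3 (the interval inclusions) are fine as written.

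Two edits would make the write-up complete.

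First, delete the circle/intermediate-value discussion and the case split on $g|_P$ in Step 1. It is vague and unnecessary: any $2$-plane $P$ contains some $w$ with $h(w,w)>0$. Then $g<h$ gives $g(w,w)>0$, and hence $(\alpha g+\beta h)(w,w)>0$ at once.

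Second, the ``index count'' you call the main obstacle is already settled by your final observation once you cite Sylvester's law of inertia. The largest subspace on which $Q\le 0$ has dimension $n_-(Q)+n_0(Q)$. So the absence of $Q$-nonpositive $2$-planes gives $n_-+n_0\le 1$, while a $g$-timelike $u$ gives $n_-\ge 1$. Hence $Q$ is nondegenerate of index one. To see nondegeneracy directly: a kernel vector $v$ of $Q$ is independent of $u$ because $Q(u,u)<0$, and $Q(au+bv,au+bv)=a^2Q(u,u)\le 0$ on their span, which is impossible.
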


We state the main theorem equating $C^0$ and interval topologies. 

\begin{teo}\label{teo:c0equalinterval}
	On $\mathrm{Con}^r(E)$ the $C^0$ topology and the interval topology coincide. 
\end{teo}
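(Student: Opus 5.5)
We prove the two inclusions separately, adapting Lerner's argument for $E=TM$ to a general vector bundle $E\to M$ with a fixed auxiliary bundle metric. Throughout we use the quotient description of the $C^0$ topology on $\mathrm{Con}^r(E)$ given by the preceding proposition: a set $\mathcal O\subset \mathrm{Con}^r(E)$ is $C^0$-open iff $\mathcal Q_\#^{-1}(\mathcal O)$ is open in $\mathrm{Lor}^r(E)$ with the strong $C^0$ topology. Fix an auxiliary Riemannian fiber metric $\rho$ on $E$ and write $SE$ for its unit sphere bundle. For $g\in \mathrm{Lor}^r(E)$ let $C_g\subset SE$ be the closed set of $\rho$-unit $g$-causal vectors. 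Then $\bm g<\bm h$ means exactly that $C_g\subset \{v: h(v,v)<0\}$.

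\textbf{Intervals are $C^0$-open.} Take $\bm k\in(\bm g,\bm h)$ and a representative $k$. We need a strong $C^0$ neighborhood $\mathcal U$ of $k$ with $g<k'<h$ for every $k'\in\mathcal U$; by the quotient property this suffices. Choose a locally finite cover by precompact open sets $U_i$, as in the proof of Theorem \ref{widen-the-cone-widen-the-wedge}. Over $\overline U_i$ the set $C_g\cap SE|_{\overline U_i}$ is compact and $k(v,v)<0$ on it, so $-k$ has a positive minimum $\delta_i$ there. Likewise $\{v\in SE|_{\overline U_i}: h(v,v)\ge 0\}$ is compact, and $k(v,v)>0$ on it because $k<h$. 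Patching the $\delta_i$ with a partition of unity gives a continuous positive $\delta$. Any $k'$ with $|k'-k|_\rho<\delta$ pointwise then satisfies $g<k'$ and $k'<h$. The second inequality needs a short argument: every $k'$-causal unit vector is in the region where $k(v,v)<\delta$, and for $\delta$ small this region lies inside $\{h<0\}$ by compactness. The strong $C^0$ topology is exactly what permits a positive continuous (non-uniform) $\delta$. Hence subbasic intervals, and thus all interval-open sets, are $C^0$-open.

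\textbf{$C^0$-open sets are interval-open.} Let $\mathcal O$ be $C^0$-open with $\bm k\in\mathcal O$ and representative $k$, so that $\mathcal Q_\#^{-1}(\mathcal O)$ contains a basic neighborhood $\{k': |k'-k|_\rho<\varepsilon\}$ for some positive continuous $\varepsilon$. The task is to find $g,h$ with $\bm g<\bm k<\bm h$ such that every $\bm l\in(\bm g,\bm h)$ has a representative $\varepsilon$-close to $k$. We first diagonalize $k$ locally against $\rho$ (a $\rho$-orthonormal frame $e_0,\dots$ with $k=-\lambda_0 e^0\otimes e^0+\sum\lambda_j e^j\otimes e^j$), then set
\[
g=k+\eta\rho,\qquad h=k-\eta\rho
\]
for a small positive continuous $\eta$. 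Two things must be checked:
\begin{enumerate}
\item $g<k<h$. This holds because $k$-causal vectors satisfy $g(v,v)\le k(v,v)+\eta\rho(v,v)$ and $h(v,v)\le k(v,v)-\eta\rho(v,v)<0$; a similar computation handles the remaining inequality.
\item Cones squeezed between those of $g$ and $h$ are close. If $\bm g<\bm l<\bm h$, the null cone of $l$ lies in the thin shell between the null cones of $g$ and $h$.
\end{enumerate}
From (2) we choose the representative $l$ normalized by $l(e_0,e_0)=k(e_0,e_0)$ in local frames, done globally by normalizing against a fixed $k$-timelike section of $E$. We then show that $|l-k|_\rho$ is controlled by $\eta$. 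This uses the fact that a Lorentzian quadratic form on a fiber is determined, up to positive scale, by its null cone, and depends continuously on it. Choosing $\eta$ small relative to $\varepsilon$ on each $\overline U_i$ then puts $\bm l\in\mathcal O$. Finally, Proposition \ref{prop:conformalprop} ensures that these representatives can be taken of class $C^r$ with a positive factor, which handles the regularity.

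\textbf{Main obstacle.} The second inclusion is the hard step: producing a quantitative estimate of the form ``cones $\eta$-squeezed implies normalized forms $O(\eta)$-close,'' uniformly on compact pieces. I would first prove it fiberwise on $\mathbb{R}^{1,m}$ by a compactness argument on the set of normalized Lorentz forms whose cones lie between two fixed ones. I would then globalize it via the locally finite cover and a partition of unity, exactly as in Lerner's proof; none of these steps depends on $E$ being $TM$.
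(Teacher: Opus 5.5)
Your proposal is correct and follows essentially the route the paper indicates, namely Lerner's argument: fiberwise estimates obtained by compactness, extended to neighbourhoods by continuity, and globalized with a locally finite precompact cover and a partition of unity, with the two inclusions handled as in the spacetime case. One small repair: a global $k$-timelike section of $E$ need not exist when $(E,k)$ is not time-orientable, but your local normalization $l(e_0,e_0)=k(e_0,e_0)$ is already globally well defined, because $e_0$, the $\rho$-unit eigenvector for the simple negative eigenvalue of $k$ relative to $\rho$, is determined up to sign and $l(e_0,e_0)$ does not depend on that sign.
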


The proof of \ref{teo:c0equalinterval} in \cite[theorem 4.4]{lerner72} for $\mathrm{Lor}(M)$ is somewhat elaborated, but the adaptation to the more general $\mathrm{Lor}(E)$ is straightforward. The philosophy of this proof is to establish properties at a fiber based on a point $x \in M$, then by continuity, such properties remain valid for a neighborhood of $x.$ To globalize these properties, a careful patching with partitions of unity is then used.

Returning to our main analysis, we now have the equivalence of the interval topology and the $C^0$ topology for $\mathrm{Lor}^r(\nu\mathcal F))$ by theorem \ref{teo:c0equalinterval}, which we use to prove that transverse causal stability implies stability in the open sense. 

\begin{corollary}\label{widenwedgesimplyc0}
If $(M, \mathcal{F}, g_\intercal)$ is transversely stably causal, then there exists a $C^0$-neighborhood $\mathcal{U}$ of $g_\intercal$ on $\mathcal{TL}(M,\f)$ such that $(M, \f, g^\prime_\intercal)$ is transversely causal for every $g^\prime_\intercal\in\mathcal{U}$.
\end{corollary}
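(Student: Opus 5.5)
The plan is to transport the problem to $\mathrm{Lor}_\intercal(\nu\f)$ via Proposition \ref{smallprime}, and there use the interval topology supplied by Theorem \ref{teo:c0equalinterval} for $E=\nu\f$. The open set will be the set of metrics whose wedges are nested strictly between two fixed transverse metrics. Start from the hypothesis: there is $g'_\intercal\in\mathcal{TL}(M,\f)$ with $g_\intercal<g'_\intercal$ and $(M,\f,g'_\intercal)$ transversely causal. Let $\bar g,\bar g'\in\mathrm{Lor}_\intercal(\nu\f)$ be the corresponding fiber metrics, so $\bar g(\bar v,\bar v)=g_\intercal(v,v)$. Since $T\f$ is exactly the kernel of both transverse metrics, $g_\intercal<g'_\intercal$ translates into $\bar g<\bar g'$ as fiber metrics on $\nu\f$: every nonzero $\bar g$-causal normal vector is $\bar g'$-timelike. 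The same holds for conformal classes, $\bm{\bar g}<\bm{\bar g'}$.

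Next, to have an interval containing $\bm{\bar g}$, choose a lower bound. I would take $\bar g'':=\bar g+\varepsilon\,\bar g_R$, where $\bar g_R$ is the normal-bundle image of a transverse Riemannian metric. More simply, I would use the convex-cone proposition with the reversed roles: take any $\bar k$ with $\bm{\bar k}<\bm{\bar g}$. If a holonomy-invariant Riemannian transverse metric is unavailable, I would instead work in all of $\mathrm{Lor}(\nu\f)$, where a lower bound exists by the Lerner-type construction. The interval $(\bm{\bar k},\bm{\bar g'})$ is then open in the interval topology, hence $C^0$-open in $\mathrm{Con}(\nu\f)$ by Theorem \ref{teo:c0equalinterval}, and it contains $\bm{\bar g}$. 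Its preimage under the continuous map $\mathcal Q_\#$ is $C^0$-open in $\mathrm{Lor}(\nu\f)$. Intersecting with $\mathrm{Lor}_\intercal(\nu\f)$ and pulling back via the homeomorphism of Proposition \ref{smallprime} gives a $C^0$-neighborhood $\mathcal U$ of $g_\intercal$ in $\mathcal{TL}(M,\f)$.

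It then remains to check that every $h_\intercal\in\mathcal U$ yields a transversely causal foliation. Such an $h_\intercal$ satisfies $h_\intercal<g'_\intercal$ after pulling back from $\nu\f$, since the relation only involves nonvertical vectors. Equip $h_\intercal$ with the time orientation whose future wedges lie inside those of $g'_\intercal$. This orientation is compatible with $g_\intercal$'s because $\bar k<\bar h$ and $\bar g$'s cones overlap $\bar h$'s. Then every future-directed $h_\intercal$-transversely causal curve is $g'_\intercal$-transversely timelike, hence $g'_\intercal$-transversely causal. A closed transverse causal loop from a leaf $L$ back to $L$ for $h_\intercal$ would therefore be one for $g'_\intercal$, contradicting transverse causality of $g'_\intercal$.

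The main obstacle is the lower bound: producing some $\bar k$ with $\bm{\bar k}<\bm{\bar g}$, so that $\bm{\bar g}$ lies in an open interval. A cleaner route avoids it altogether, which is what I would try first. Show directly that $\{\bm{\bar h}:\bm{\bar h}<\bm{\bar g'}\}$ is $C^0$-open. This is the standard Lerner fact that the strict upper set is open: take a compact-exhaustion estimate on the unit sphere bundle of $\nu\f$ with respect to an auxiliary fiber metric, as in the $\varepsilon_i$ construction in the proof of Theorem \ref{widen-the-cone-widen-the-wedge}. Strict containment of the $\bar g$-cones in the $\bar g'$-timecone gives a positive continuous margin, and any $C^0$-perturbation smaller than that margin preserves the containment. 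Holonomy invariance plays no role in this openness argument and is only needed to stay inside $\mathcal{TL}$.
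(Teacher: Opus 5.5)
Your proposal is correct. Your interval-topology route is the paper's proof: the paper places $\bm g$ in an interval $(\bm h,\bm g_0)$ of $\mathrm{Con}(\nu\f)$, which is $C^0$-open by Theorem \ref{teo:c0equalinterval}. It then pulls this interval back by $\mathcal Q_\#$, intersects with $\mathrm{Lor}_\intercal(\nu\f)$, and notes that every element lies below $g_0$.

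The paper simply asserts that the lower endpoint $\bm h$ exists, so the obstacle you flag is real but easily settled. In $\mathrm{Lor}(\nu\f)$ take $\bar k=\bar g+\varepsilon\bar\rho$, where $\bar\rho$ is any (not necessarily holonomy-invariant) Riemannian fiber metric on $\nu\f$. Choose $\varepsilon$ to be a positive smooth function below $-\bar g(\bar X,\bar X)/\bar\rho(\bar X,\bar X)$ for some transversely timelike $X$. Then $\bar k$ is Lorentzian, and $\bar k(\bar v,\bar v)\le 0$ forces $\bar g(\bar v,\bar v)\le-\varepsilon\bar\rho(\bar v,\bar v)<0$. Your remark on time-orienting each perturbed metric inside the $g'_\intercal$-wedges is also left implicit in the paper. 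Compatibility with $g_\intercal$ is not actually needed, since transverse causality does not depend on which orientation is chosen.

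The direct route you would try first is genuinely different and more elementary. It uses only the easy half of Lerner's theorem, namely that the metrics lying strictly below $\bar g'$ contain a $C^0$-neighborhood of $\bar g$. It needs no lower bound and no conformal classes. It can even be run on $\mathcal{TL}(M,\f)$ itself, using the unit sphere bundle of the orthogonal complement of $T\f$ for an auxiliary Riemannian metric on $M$, since transverse metrics vanish on $T\f$; this bypasses Proposition \ref{smallprime}. What the paper's route buys is a description of the neighborhood as the trace of an interval. The price is invoking Theorem \ref{teo:c0equalinterval}, whose adaptation to vector bundles is only asserted in the paper.

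One caution about the margin in the direct argument. It must be
$m(x)=\min\{\bar g(\bar v,\bar v):\bar\rho(\bar v,\bar v)=1,\ \bar g'(\bar v,\bar v)\ge 0\}$, which is bounded below by positive constants on compact sets because $\bar g<\bar g'$. It should not be the quantity $\min\{-\bar g'(\bar v,\bar v)\}$ over $\bar g$-causal unit vectors from the proof of Theorem \ref{widen-the-cone-widen-the-wedge}. That quantity scales with $\bar g'$: replacing $\bar g'$ by $\lambda\bar g'$ with $\lambda$ large leaves the cones unchanged but inflates the margin. So it does not control how far a perturbation of $\bar g$ may tilt its cones. With $m$ the argument closes: if $|\bar h(\bar v,\bar v)-\bar g(\bar v,\bar v)|<\delta(x)\le m(x)$ on $\bar\rho$-unit vectors, then every $\bar h$-causal unit vector has $\bar g(\bar v,\bar v)<m(x)$. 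Hence it lies outside $\{\bar g'\ge 0\}$, that is, $\bar h<\bar g'$.
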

\begin{proof}
    Since $\mathrm{Lor}_\intercal (\nu \mathcal F)$ is homeomorphic to $\mathcal{TL}(M,\f),$ we work with $\mathrm{Lor}_\intercal (\nu \mathcal F).$ Given $g \in \mathrm{Lor}_\intercal (\nu \mathcal F)$ stably causal, there exists $g_0 \in \mathrm{Lor}(\nu \mathcal F)$ causal with  $ g < g_0.$ By the equivalence of the interval and $C^0$ topology, $\bm g$ is in some interval $\mathcal U=(\bm h, \bm{g}_0), \subseteq \mathrm{Con}(\nu \mathcal F)$ which is a $C^0$-open set. Then, setting $\mathcal V =\mathcal Q_{\#}^{-1}(\mathcal U),$ $\mathcal V$ is an open subset of $\mathrm{Lor}(\nu\mathcal F)$ with $k < g_0$ for all $k\in \mathcal V.$  The set $\widetilde{\mathcal V} = \mathcal V \cap \mathrm{Lor}_\intercal(\nu\mathcal F)$ is then $C^0$-open for the subspace topology of $\mathrm{Lor}_\intercal(\nu\mathcal F)$, it contains $g$, and is such that all holonomy-invariant Lorentzian fiber metrics $h \in \mathcal V$ are causal since $h < g_0.$
\end{proof}
\color{blue}
\color{black}
\section{Transverse time functions}\label{subsec9.2}

Recall that a \textit{time function} on a spacetime $(M,g)$ is a continuous function $f:M\rightarrow \mathbb{R}$ such that for any future-directed causal curve $\alpha:[a,b] \rightarrow M$ we have
$$\forall t,s \in [a,b], \; t<s \Longrightarrow f\circ \alpha(t) < f\circ \alpha(s),$$
or equivalently,
$$\forall p,q \in M, \; p<q \Longrightarrow f(p)<f(q).$$
As its name suggests, the physical importance of time functions is that they give a global notion of ``time elapsed between events'' in geometric theories of gravity such as General Relativity, and its level hypersurfaces model a notion of ``simultaneity hypersurface''. It is well-known that time functions exists on a spacetime $(M,g)$ if and only if the latter is stably causal. 

The transverse analogue of a time function for a Lorentzian foliation is as follows.
\begin{definition}[Transverse time function]\label{transvtimedefi} 
A \emph{transverse time function} on $(M,\mathcal{F},g_\intercal)$ is a continuous function $f:M\rightarrow \mathbb{R}$ such that for any future-directed transversely causal curve $\alpha:[a,b] \rightarrow M$ we have
$$\forall t,s \in [a,b], \; t<s \Longrightarrow f\circ \alpha(t) < f\circ \alpha(s).$$
\end{definition}
One might wonder whether there is any definite relationship between transverse time functions and ordinary time functions. The next result gives a precise answer.
\begin{proposition}\label{proptransvtimevstime}
 Let $f:M\rightarrow \mathbb{R}$ be a continuous function. The following statements are equivalent.
 \begin{itemize}
     \item[i)] $f$ is a transverse time function on $(M,\mathcal{F},g_\intercal)$.
     \item[ii)] $f$ is \emph{basic}, i.e., constant on the leaves of $\mathcal{F}$, and for any bundle-like Lorentzian metric $g$ on $(M,\mathcal{F})$ associated with $g_\intercal$, $f$ is a time function on $(M,g)$.
     \item[iii)] $f$ is basic and there exists a bundle-like Lorentzian metric $g$ on $(M,\mathcal{F})$ associated with $g_\intercal$ such that $f$ is a time function on $(M,g)$. 
 \end{itemize}
\end{proposition}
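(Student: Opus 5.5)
I will prove the cycle i) $\Rightarrow$ ii) $\Rightarrow$ iii) $\Rightarrow$ i). The key tool is a comparison between $g$-causal vectors and transversely causal vectors for $g\in\mathcal{BL}(g_\intercal)$. Write $v=v^\top+v^\perp$ with respect to $g$. Then
$$g(v,v)=g(v^\top,v^\top)+g_\intercal(v,v),$$
and $g(v^\top,v^\top)>0$ whenever $v^\top\neq 0$, since the leaves are $g$-spacelike (the kernel of $g_\intercal$ is $T\f$ and $g$ has index one). Hence any nonzero $g$-causal vector has $g_\intercal(v,v)\le 0$. It is also non-vertical, since a nonzero vertical vector is $g$-spacelike. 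So every $g$-causal vector is transversely causal. Moreover, the time orientation of $g$ can be chosen compatibly with the transverse one: take a transversely timelike future field $X$; its $g$-horizontal part is $g$-timelike. With this choice, future-directed $g$-causal curves are future-directed transversely causal curves.

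\textbf{i) $\Rightarrow$ ii).} The main point is that a transverse time function must be basic. Let $x,y$ lie on the same leaf $L$. Take a future-directed transversely timelike curve $\alpha$ starting at $x$. Add to its velocity a vertical field, which changes neither $g_\intercal(\dot\alpha,\dot\alpha)$ nor the wedge. This lets me steer the curve along the leaf direction, and by concatenating pieces inside foliated charts I can reach points $y'$ arbitrarily close to $y$. In a foliated chart $(u,w)\in\mathbb{R}^{n-q}\times\mathbb{R}^q$, the curve $s\mapsto (u(s),w_0+s\,e)$ with $e$ a fixed future transverse timelike direction is transversely timelike for any smooth $u(s)$.

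The same chart construction shows that from $y$ one can reach, by arbitrarily short transversely timelike curves, points whose transverse coordinate equals that of $x$ shifted forward slightly, while the vertical coordinate is arbitrary. Using this, chain along a plaque path from $x$ to $y$ to get $f(x)<f(y_\epsilon)$ with $y_\epsilon\to y$, so $f(x)\le f(y)$ by continuity. By symmetry $f(x)=f(y)$.

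More precisely, I expect the cleanest argument to be the following. For $x,y$ in one plaque, the curves $s\mapsto(u(s),w_0+s e)$, $s\in[0,\epsilon]$, with $u(0)=u_x$ and $u(\epsilon)=u_y$, give $f(x)<f(u_y,w_0+\epsilon e)$. Letting $\epsilon\to0$ yields $f(x)\le f(y)$, and symmetry gives equality. Connectedness of the leaf then propagates constancy plaque by plaque. Finally, by the comparison above, $f$ is increasing along future-directed $g$-causal curves for every $g\in\mathcal{BL}(g_\intercal)$, so $f$ is a time function of $(M,g)$.

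\textbf{ii) $\Rightarrow$ iii).} This is immediate, since $\mathcal{BL}(g_\intercal)\neq\emptyset$ by the construction in Section \ref{sec:prelim}.

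\textbf{iii) $\Rightarrow$ i).} This is the expected obstacle, because a transversely causal curve $\alpha$ need not be $g$-causal. The plan is to replace $\alpha$ by a $g$-causal curve joining the same leaves. Decompose $\dot\alpha=\dot\alpha^\top+\dot\alpha^\perp$. Then solve for a curve $\beta(s)=\Phi_s(\alpha(s))$, where $\Phi_s$ is a family of local vertical displacements chosen so that $\dot\beta$ is $g$-horizontal up to transverse directions; that is, take the horizontal lift of the transverse motion.

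Locally, in a simple foliated chart with transverse projection $p$, the curve $p\circ\alpha$ is causal for the quotient metric. Its $g$-horizontal lift $\beta$ through $\alpha(a)$ exists for short time and satisfies $g(\dot\beta,\dot\beta)=g_\intercal(\dot\alpha,\dot\alpha)\le0$. Also, $\beta(s)$ lies in the same plaque as $\alpha(s)$, because both project to $p\circ\alpha(s)$. Since $f$ is basic, $f(\alpha(s))=f(\beta(s))$, and $f\circ\beta$ is strictly increasing because $\beta$ is future-directed $g$-causal (transversely null pieces are $g$-null, which is allowed for time functions). Covering $[a,b]$ by finitely many such chart intervals, I restart the lift at each $\alpha(s_i)$; the local monotonicity then gives global strict monotonicity of $f\circ\alpha$. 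The care needed is in the lift's existence on each chart interval: shrink the intervals so the lift stays in the chart, using compactness of $\alpha([a,b])$.
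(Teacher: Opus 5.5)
Your proof is correct, and its skeleton matches the paper's. Both use the same cycle of implications. Both prove basicness in (i)$\Rightarrow$(ii) by reaching, in the transverse future of one point, points arbitrarily close to another point of the same leaf. Both handle (iii)$\Rightarrow$(i) by covering $\alpha$ with finitely many simple charts and replacing each piece by a future-directed $g$-causal curve joining the same plaques.

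The difference is in how that connecting step is supplied. The paper invokes the Causal Waterfall Lemma of \cite{us} in both directions. In (i)$\Rightarrow$(ii) it argues by contradiction: it applies the lemma along the whole leaf to get a transversely causal curve from $q$ to the endpoint of a short $g$-causal curve issuing from $p$. In (iii)$\Rightarrow$(i) it uses the lemma to obtain the $g$-causal replacements $\beta_\ell$.

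You build everything by hand in foliated charts. For basicness, the explicit curves $s\mapsto(u(s),w_0+se)$ give $f(x)\le f(y)$ on a plaque, and symmetry gives equality. These curves are transversely timelike for small $s$ because $g_\intercal$ annihilates vertical vectors and its coefficients depend only on $w$. Connectedness of the leaf in its own topology then finishes basicness. For (iii)$\Rightarrow$(i), you take $g$-horizontal lifts, for which $g(\dot\beta,\dot\beta)=g_\intercal(\dot\alpha,\dot\alpha)\le 0$.

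Your route is self-contained. It also proves explicitly that $g$-causal vectors are transversely causal, with compatible time orientations, which the paper only asserts. The price is extra bookkeeping that the paper's appeal to the Waterfall Lemma absorbs:
\begin{itemize}
\item a uniform existence time for horizontal lifts on a compact sub-box of each chart;
\item the caveat that $e$ remains future timelike at $w_0+se$ only for small $s$, not ``for any'' curve as you phrase it.
\end{itemize}
Your first, heuristic paragraph of (i)$\Rightarrow$(ii), which chains along a plaque path, is superseded by the plaque-plus-connectedness argument and can be dropped.
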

\begin{proof}
    $(i) \Rightarrow (ii)$ \\
    Suppose $f:M\rightarrow \mathbb{R}$ is a transverse time function on $(M,\mathcal{F},g_\intercal)$, and fix an arbitrary Lorentzian bundle-like metric $g$ associated with $g_\intercal$. Since any $g$-causal vector is transversely causal, $f$ is a time function on the spacetime $(M,g)$. It remains to show it is indeed constant on the leaves of $\mathcal{F}$. 
    
    Suppose, by contradiction, it is not. In that case, there is some leaf $L\in \mathcal{F}$ and points $p\neq q \in L$ so that $f(p)< f(q)$. Pick any number $\varepsilon >0$ so that $f(p)+\varepsilon < f(q)$, and using the Hausdorffness of $M$ and continuity of $f$, choose disjoint open sets $U\ni p$ and $V\ni q$ in $M$ so that $f(p')< f(p)+\varepsilon$ for each $p'\in U$. Finally, choose any future-directed $g$-causal curve $\alpha:[0,1] \rightarrow U$. By the Causal Waterfall Lemma (see \cite{us}, Lemma 4.19), there exists a future-directed transversely causal curve $\beta:[0,1] \rightarrow M$ so that $\beta(0) =q$ and $\beta(1) = \alpha(1)$. But then
    $$f(p) < f(q) = f(\beta(0)) \stackrel{f \text{ is transverse time}}{<} f(\beta(1))= f(\alpha(1)) < f(p) + \varepsilon, $$
    an absurd. We conclude that $f$ is indeed basic.\\
    $(ii)\Rightarrow (iii)$ is trivial.\\
    $(iii)\Rightarrow (i)$\\
    Suppose there exists a Lorentzian bundle-like metric $g$ on $(M,\mathcal{F})$ associated with $g_\intercal$ for which $f$ is a time function, and assume that $f$ is constant on the leaves of $\mathcal{F}$. Let $\alpha:[a,b]\rightarrow M$ be any future-directed transversely causal curve. Let $\mathcal{H}= \{(U_i,\pi_i,\gamma_{ij})\}_{i,j\in I}$ be a Haefliger cocycle of $(M,\mathcal{F})$. The compactness of the image $\alpha[a,b]$ allows one to cover it by a finite number, say $U_{i_1}, \dots, U_{i_k}$ of the domain (simple) sets in $\mathcal{H}$, and fix a partition $t_0=a<t_1 <\cdots < t_k=b$ so that $\alpha[t_{\ell-1},t_\ell] \subset U_{i_\ell}$ for each $\ell=1,\ldots, k$. In addition, for each such $\ell$ there exists a unique Lorentzian metric $h_{\ell}$ on $V_{\ell}:=\pi_{i_\ell}(U_{i_\ell})\subset \mathbb{R}^d$ (where $d$ the codimension of $(M,\mathcal{F})$) such that $\pi_\ell:= \pi_{i_{\ell}} :(U_{i_\ell}, g_{U_{i_\ell}}) \rightarrow (V_\ell,h_\ell)$ is an onto Lorentzian submersion. Furthermore,
    \begin{eqnarray}
        h_\ell(\pi_\ell\circ \dot{\alpha}|_{[t_{\ell-1},t_\ell]}, \pi_\ell \circ \dot{\alpha}|_{[t_{\ell-1},t_\ell]}) &=& (\pi_\ell^\ast h_\ell)(\dot{\alpha}|_{[t_{\ell-1},t_\ell]}, \pi_\ell \circ \dot{\alpha}|_{[t_{\ell-1},t_\ell]})\nonumber \\
&\equiv& g_\intercal(\dot{\alpha}|_{[t_{\ell-1},t_\ell]}, \pi_\ell \circ \dot{\alpha}|_{[t_{\ell-1},t_\ell]})\leq 0,\nonumber
\end{eqnarray}
    so $\alpha|_{[t_{\ell-1},t_\ell]}$ is $h_\ell$-causal in $V_\ell$. By the Causal Waterfall Lemma (see \cite{us}, Lemma 4.19), we can then choose a $g$-causal curve $\beta_\ell:[t_{\ell-1},t_\ell]\rightarrow U_{i_\ell}$ such that $L_{\alpha(t_{{\ell}-1})}=L_{\beta_\ell(t_{\ell -1})} $ and $L_{\alpha(t_{{\ell}})}=L_{\beta_\ell(t_{\ell})} $. Therefore, since $f$ is basic,
    $$f(\alpha(t_{{\ell}-1})= f(\beta(t_{{\ell}-1}) \stackrel{\text{$f$ is time function}}{<}f(\beta(t_{{\ell}})=f(\alpha(t_{{\ell}}).$$
    We then conclude that $f(\alpha(a))<f(\alpha(b))$, so that $f$ is indeed a transverse time function, thus completing the proof.
\end{proof}

Before we are ready to state and prove the main result of this section, we shall need a lemma of independent interest. 
\begin{lemma}\label{independentlemma}
Let $(M,\f,g_\intercal)$ be a Lorentzian foliation. Given a Riemannian metric $h$ on $M$ define, for each $k\in \mathbb{N}$, the bundle-like Lorentzian metric associated with $g_\intercal$ given by
$$g_k(X,Y) := g_\intercal(X,Y) +\frac{1}{k}h(X^\top,Y^\top), \quad \forall X,Y\in \mathfrak{X}(M),$$
where the superscript ``$\top$'' indicates the vertical part with respect to $h$ of the underlying vector/vector field. Then, for any leaf $L \in \f$ and any $p\in L$ we have
\begin{equation}\label{indlemmaeq}
I^\pm_\intercal(L;g_\intercal)=\bigcup_{k\in\mathbb{N}}I^\pm(p;g_k).
\end{equation}  
\end{lemma}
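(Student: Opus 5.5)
We prove the two inclusions in \eqref{indlemmaeq} separately, treating only the ``$+$'' case since the ``$-$'' case follows by time-duality. First observe that the metrics $g_k$ decrease in $k$: for $v\in TM$ we have $g_k(v,v)=g_\intercal(v,v)+\frac1k h(v^\top,v^\top)$, so $g_{k}(v,v)\ge g_{k+1}(v,v)\ge g_\intercal(v,v)$. Hence every $g_k$-timelike vector is $g_{k+1}$-timelike, and also transversely timelike. Indeed, $g_\intercal(v,v)\le g_k(v,v)<0$, and $v$ cannot be vertical because $g_\intercal$ vanishes on vertical vectors. Each $g_k$ is time-oriented compatibly with the transverse time orientation, by declaring future the $g_k$-timecone contained in the future wedge. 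This gives a nested chain $I^+(p;g_1)\subset I^+(p;g_2)\subset\cdots$.

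\textbf{Inclusion ``$\supseteq$''.} Let $q\in I^+(p;g_k)$, witnessed by a future-directed $g_k$-timelike curve $\gamma$ from $p$ to $q$. By the above, $\gamma$ is future-directed transversely timelike, so $L\ll_{M/\f}L_q$. Therefore $q\in I^+_\intercal(L;g_\intercal)$.

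\textbf{Inclusion ``$\subseteq$''.} Let $q\in I^+_\intercal(L;g_\intercal)$. There is then a future-directed transversely timelike curve $\alpha:[a,b]\to M$ with $\alpha(a)\in L$ and $\alpha(b)\in L_q$. We modify it in three steps.
\begin{enumerate}
\item \emph{Fix the endpoints.} Connect $p$ to $\alpha(a)$ by a curve inside $L$, and $\alpha(b)$ to $q$ by a curve inside $L_q$. These pieces are vertical, hence not transversely timelike, so they must be absorbed. The cleanest route is to invoke the Causal Waterfall Lemma (\cite{us}, Lemma 4.19), as in the proof of Proposition \ref{proptransvtimevstime}, or its timelike analogue. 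Its content is that one can slide the starting point of a transversely timelike curve along the leaf, producing a transversely timelike curve from $p$ to $q$. Alternatively, one can directly concatenate, e.g.\ $\tilde\alpha(t)$ equal to the leafwise path plus a small multiple of a transversely timelike field. Since adding vertical components does not change $g_\intercal(\dot{\tilde\alpha},\dot{\tilde\alpha})$ (it is basic and horizontal), one can reparametrize so that the vertical motion is carried out simultaneously with transverse motion along $\alpha$. Concretely, I would work in simple foliated charts along a partition of $[a,b]$. In each chart, a curve with prescribed transverse projection $\pi_\ell\circ\alpha$ can be lifted with arbitrary leafwise endpoints, since the fibers are connected plaques. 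Doing this chart by chart moves the endpoints to $p$ and $q$ while keeping $g_\intercal(\dot\beta,\dot\beta)<0$. Managing the connectedness of plaques along a whole leaf path is the main obstacle; I expect to handle it by covering the leafwise paths by finitely many plaques and distributing the leafwise displacement among the segments of $\alpha$.
\item \emph{Uniform negativity.} The resulting piecewise smooth curve $\beta:[a,b]\to M$ has compact image, and $g_\intercal(\dot\beta,\dot\beta)$, taken over the finitely many closed smooth pieces (using one-sided derivatives at the breaks), is $\le -c<0$. Likewise $h(\dot\beta^\top,\dot\beta^\top)\le C$.
\item \emph{Choose $k$.} Take $k>C/c$. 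Then $g_k(\dot\beta,\dot\beta)\le -c+C/k<0$, so $\beta$ is $g_k$-timelike. It is future-directed because its velocity lies in the future wedge. Hence $q\in I^+(p;g_k)$.
\end{enumerate}

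The expected difficulty is thus entirely in step (1): producing, from a transversely timelike curve between the leaves $L$ and $L_q$, one starting exactly at $p$ and ending exactly at $q$. Steps (2) and (3) are a routine compactness argument.
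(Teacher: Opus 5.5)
Your proposal is correct and follows essentially the same route as the paper. The Causal Waterfall Lemma (\cite{us}, Lemma 4.19) gives a transversely timelike curve with the exact endpoints; compactness then bounds $g_\intercal(\dot\beta,\dot\beta)$ away from zero and $h(\dot\beta^\top,\dot\beta^\top)$ from above, so the curve is $g_k$-timelike for large $k$, and the reverse inclusion is immediate from $g_\intercal(v,v)\le g_k(v,v)$.
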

\begin{proof}
    We only give the proof for $I^-$ since the case of $I^+$ is entirely analogous. Given $q\in I^-_\intercal(p;g_\intercal)$, the Causal Waterfall Lemma (see \cite{us}, Lemma 4.19) implies there is a future-directed transversely $g_\intercal$-timelike curve $\gamma:[0,1]\rightarrow M$ such that $\gamma(0)=q, \gamma(1)=p$. Let $$ 
s:=\max\{h(\gamma^\prime(t)^\top, \gamma^\prime(t)^\top)\, : \,  t\in[0,1]\}.
    $$ It is clear from the compactness of $[0,1]$ that we can pick $K\in\mathbb{N}$ such that $k\geq K \implies |g_\intercal(\gamma^\prime(t),\gamma^\prime(t))|>\frac{1}{k}s, \forall t\in[0,1]$. It follows that $g_k(\gamma^\prime(t),\gamma^\prime(t))<0$ for $k\geq K$, which yields one inclusion. The converse inclusion is trivial.
\end{proof}

The next theorem is not yet a perfect generalization of the standard result of existence of time functions in stably causal spacetimes, since we have only been able to prove the continuity of the transverse time function ``apart from a set of measure zero''. We present it here in case future work - either by ourselves or other researchers - might sharpen it all the way. 

\begin{teo}\label{mainstableimpliestime}
If $(M,\f,g_\intercal)$ is transversely stably causal, then it admits a function $f:M\rightarrow\mathbb{R}$ which is almost everywhere continuous and strictly increases along every future-directed transversely causal curve.
\end{teo}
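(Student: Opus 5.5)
The plan is to adapt the classical Hawking volume-function construction for stably causal spacetimes, applied to a transversely causal foliation with \emph{wider} wedges. The transverse pasts are taken with respect to the wider metric, and the measure of these pasts is used as the time function.

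\emph{Setup.} By Definition \ref{trstablecsldefi}, fix $g'_\intercal\in\mathcal{TL}(M,\f)$ with $g_\intercal<g'_\intercal$ and $(M,\f,g'_\intercal)$ transversely causal. Choose a finite Borel measure $m$ on $M$ that is smooth and positive in charts; for instance, $m=\phi\,\mathrm{dvol}_h$ for a Riemannian $h$ and a positive $\phi$ with $\int_M\phi\,\mathrm{dvol}_h=1$. For each leaf $L$ set
\[
f(x):=-\,m\big(I^+_\intercal(L_x;g'_\intercal)\big) .
\]
This is basic by construction, since it depends only on the leaf. Equivalently one may use $m(I^-_\intercal(L_x;g'_\intercal))$, or the sum of both terms, to get two-sided monotonicity. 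By Lemma \ref{independentlemma}, $I^\pm_\intercal(L;g'_\intercal)=\bigcup_k I^\pm(p;g'_k)$ is an increasing union of open sets. So these sets are open, and $m(I^\pm_\intercal(L;g'_\intercal))=\lim_k m(I^\pm(p;g'_k))$, which expresses $f$ through spacetime volume functions.

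\emph{Strict monotonicity.} Let $\alpha$ be a future-directed $g_\intercal$-transversely causal curve with $t<s$. Since $g_\intercal<g'_\intercal$, the vector $\dot\alpha$ is $g'_\intercal$-transversely timelike, so $L_{\alpha(s)}\in I^+_{M/\f}(L_{\alpha(t)})$ for $g'_\intercal$. Transitivity of the transverse chronological relation gives $I^+_\intercal(L_{\alpha(s)};g'_\intercal)\subset I^+_\intercal(L_{\alpha(t)};g'_\intercal)$, hence $f(\alpha(t))\le f(\alpha(s))$. For strictness, the open set $I^+_\intercal(L_{\alpha(t)};g'_\intercal)\cap I^-_\intercal(L_{\alpha(s)};g'_\intercal)$ is nonempty, since it contains $\alpha((t,s))$. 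It has positive measure, and it is disjoint from $I^+_\intercal(L_{\alpha(s)};g'_\intercal)$: a point in both would yield a closed $g'_\intercal$-transversely timelike loop through $L_{\alpha(s)}$, contradicting transverse causality of $g'_\intercal$. Thus the inclusion of pasts/futures is strict by a set of positive measure, and $f(\alpha(t))<f(\alpha(s))$.

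\emph{Continuity almost everywhere (main obstacle).} Openness of $I^+_\intercal$ gives lower semicontinuity of $x\mapsto m(I^+_\intercal(L_x;g'_\intercal))$. Indeed, each $y\in I^+_\intercal(L_x)$ lies in $I^+(p;g'_k)$ for some $k$, and the standard spacetime fact that $x\mapsto I^+(x;g'_k)$ is inner continuous lets one apply Fatou/monotone convergence. Consequently $f$ is upper semicontinuous, with a symmetric statement for the past version. A semicontinuous function is of Baire class one, so its points of continuity form a residual set. The delicate part is upgrading ``residual'' to ``full measure'', and I would try two routes. The first route is to show that outer discontinuities occur only at points $x$ where $m(\partial I^+_\intercal(L_x;g'_\intercal))>0$ is somehow needed, and to prove that boundaries of transverse futures have measure zero because they are locally achronal, hence Lipschitz-graph-like, in foliated charts; the obstruction is that they may accumulate non-Hausdorffly. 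The second route is to exploit monotonicity: $f$ is monotone along the integral curves of a fixed $g_\intercal$-transversely timelike vector field $T$. In a foliated flow-box chart one can therefore treat $f$ as monotone in one coordinate and use that monotone functions of one variable have countably many discontinuities. Fubini then shows that the discontinuity set of $f$ restricted to each $T$-line is null, and semicontinuity in the transverse directions, combined with the monotone structure, gives that the full discontinuity set is null. I expect this Fubini/monotonicity step — controlling joint continuity rather than just continuity along $T$-lines — to be the main difficulty, and it is presumably why only almost-everywhere continuity is claimed.
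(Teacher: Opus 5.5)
Your setup and strict-monotonicity argument are correct, and they take a different, more economical route than the paper. The paper uses Hawking's averaging, $f(p)=\int_1^2\mu(I^-_\intercal(p;g^\alpha_\intercal))\,d\alpha$, over the family $g^\alpha_\intercal$ interpolating between $g_\intercal$ and $g'_\intercal$. It approximates this $f$ by the continuous spacetime time functions $f_k$ of the bundle-like metrics $g^\alpha_k$, and gets strictness from push-up and a leaf in $I^-_{M/\f}(L')\setminus\overline{I^-_{M/\f}(L)}$. You use a single volume function of the wider metric. Your observation that $I^+_\intercal(L_{\alpha(t)};g'_\intercal)\cap I^-_\intercal(L_{\alpha(s)};g'_\intercal)$ is a nonempty open set disjoint from $I^+_\intercal(L_{\alpha(s)};g'_\intercal)$, by transverse chronology of $g'_\intercal$, gives strictness directly.

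\textbf{The gap.} The other half of the statement, almost-everywhere continuity, is never proved. Upper semicontinuity only gives a residual set of continuity points, and a residual set can have small measure. Route 1 is speculative: null boundaries of transverse futures would not by themselves rule out outer jumps of $x\mapsto m(I^+_\intercal(L_x))$. In route 2, monotonicity along $T$-lines plus Fubini only controls the set where $f$ restricted to lines is discontinuous. That is not yet the set where $f$ is discontinuous on $M$, and ``semicontinuity in the transverse directions'' does not close the difference.

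\textbf{The missing idea.} Use monotonicity along a whole open cone of directions, not a single one.
In a chart around $x_0$, choose a coordinate vector $e$, a radius $r>0$ and a convex neighbourhood $W$ such that every constant coordinate vector in the ball $B(e,r)$ is future-directed $g_\intercal$-transversely timelike at every point of $W$. This is possible because the future transverse timelike wedge is open in $TM$.
Straight segments in $W$ with direction in the cone spanned by $B(e,r)$ are then future-directed transversely timelike curves. Since $te\pm(y-x)$ lies in that cone whenever $|y-x|<rt$, you get $f(x-te)<f(y)<f(x+te)$ for all such $y$. Hence $f$ is continuous at $x$ if and only if $t\mapsto f(x+te)$ is continuous at $t=0$.
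The discontinuity set of $f$ is $F_\sigma$, hence Borel. It meets each line parallel to $e$ in a countable set, namely the discontinuities of a monotone function of one variable. By Fubini it is therefore Lebesgue-null in $W$, and second countability of $M$ finishes the argument.

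This uses only the monotonicity you already proved, so it completes your argument without any semicontinuity. It is also what the paper's version actually needs. There, monotone convergence plus $\mu(M)<\infty$ only gives, via Egorov, uniform convergence of the $f_k$ off sets of small measure. That yields continuity of restrictions of $f$ to large sets, not continuity at almost every point.
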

\begin{proof}
Since $(M,\f,g_\intercal)$ is transversely stably causal, we can fix a transverse Lorentzian metric $g_\intercal^\prime$ on $(M,\mathcal{F})$ with $g_\intercal<g^\prime_\intercal$ such that $(M,\f,g^\prime_\intercal)$ is transversely causal. By Theorem \ref{widen-the-cone-widen-the-wedge} and Corollary \ref{cortansvstableimpliesstable}, we can fix Lorentzian metrics $g \in \mathcal{BL}(g_\intercal), \, g' \in \mathcal{BL}(g'_\intercal)$, respectively, so that $g<g'$ and $(M,g')$ is a causal spacetime. We can write 
\begin{align*}
   g(X,Y) =\,  & g_\intercal(X,Y) + h(X^\top,Y^\top), \\
   g'(X,Y) = \, & g'_\intercal(X,Y) + \frac{f}{2}h(X^\top,Y^\top), \forall  X,Y \in \mathfrak{X}(M),
\end{align*}
where $h$ is a Riemannian metric on $M$, the superscript ``$\top$'' denotes the vertical part of the corresponding field according to $h$ and $f \in C^\infty(M)$ is chosen as in the proof of Theorem \ref{widen-the-cone-widen-the-wedge} to ensure that 
\begin{equation}\label{pegadinha}
    g'(v,v)\leq \frac{1}{2}g'_\intercal(v,v), \quad \forall v \in TM. 
\end{equation}

Next, define, for each $k \in \mathbb{N}$, 
\begin{align*}
   g_k(X,Y) :=\,  & g_\intercal(X,Y) + \frac{1}{k}h(X^\top,Y^\top), \\
   g'_k(X,Y) := \, & g'_\intercal(X,Y) + \frac{f}{2k}h(X^\top,Y^\top), \hspace{10pt} \forall  X,Y \in \mathfrak{X}(M).
\end{align*}
About this sequence of symmetric $(0,2)$-tensors on $M$, the following facts must be noticed. 
\begin{itemize}
    \item[1)] $g_k,g'_k$ are Lorentzian metrics on $M$, and in fact $g_k\in \mathcal{BL}(g_\intercal), g'_k\in \mathcal{BL}(g'_\intercal)$, so that in particular $(M,g_k)$ is stably causal by Corollary \ref{cortansvstableimpliesstable} and $(M,g'_k)$ is causal, $\forall k \in \mathbb{N}$. 
    \item[2)] For each $k \in \mathbb{N}$ we have $g_k<g'_k$. To see this, let $v\in TM$ be such that $g_k(v,v)\leq 0$. Then we directly check that $g_\intercal(v,v) \leq 0$, and hence $g'_\intercal(v,v)<0$ by our choice of $g'_\intercal$, which in turn ensures that $g'(v,v)<0$. However, 
    $$g'_k(v,v) = \left(1-\frac{1}{k}\right) g'_\intercal(v,v) + \frac{1}{k}g'(v,v) <0,$$
    whence the claim follows. 
\end{itemize}
Now, for each $k\in \mathbb{N}$, and each $\alpha\in[0,3]$, we define a symmetric $(0,2)$-tensor field on $M$ by
$$g^\alpha_k:=\frac{\alpha}{3} g_k^\prime+\left(1-\frac{\alpha}{3}\right)g_k.$$
Fix an admissible measure $\mu$ on $M$. According to the well-known discussion in \cite[section 6.4, pp. 199-201]{hawking-ellis}, for each $k \in \mathbb{N}$ the function 
$$f_k : p\in M \mapsto \int_1^2 v^\alpha_k(p)d\alpha \in \mathbb{R},$$ where $v^\alpha_k(p):=\mu(I^-(p;g_k^\alpha))$,
is a time function on $(M,g_k)$, and in particular it is continuous. We now set $v^\alpha_\intercal(p):=\mu(I^-_\intercal(p;g_\intercal^\alpha))$, where
$$
g_\intercal^\alpha:=\frac{\alpha}{3} g_\intercal^\prime+\left(1-\frac{\alpha}{3}\right)g_\intercal.
$$It is clear that each such $g_\intercal^\alpha$ is a transverse Lorentzian metric on $(M,\f)$ and for any $\alpha,\alpha'\in [0,3]$ such that $\alpha<\alpha'$ we have
$$g^\alpha_\intercal < g^{\alpha'}_\intercal.$$ Now, note that for each $k\in\mathbb{N}$, and each $\alpha\in[0,3]$ we have \begin{align*}
g_k^\alpha(X,X)&=\frac{\alpha}{3}g^\prime_k+\left(1-\frac{\alpha}{3}\right)g_k\\
&=\frac{\alpha}{3}\left(g^\prime_\intercal(X,X)+\frac{f}{2k}h(X^\top,X^\top)\right)+\left(1-\frac{\alpha}{3}\right)\left(g_\intercal(X,X)+\frac{1}{k}h(X^\top,X^\top)\right)\\
&=\frac{\alpha}{3} g_\intercal^\prime(X,X)+\left(1-\frac{\alpha}{3}\right)g_\intercal(X,X)+\frac{1}{k}\left(\frac{\alpha}{3}\frac{f}{2}+1-\frac{\alpha}{3}\right)h(X^\top, X^\top)\\
&=g_\intercal^\alpha(X,X)+\frac{1}{k}J(\alpha,f)h(X^\top, X^\top),
\end{align*} which means that each of them is a bundle-like metric associated to $g_\intercal^\alpha$. Therefore, we can employ Lemma (\ref{independentlemma}) to obtain that $v^\alpha_k$ is a monotone sequence converging pointwise to $v_\intercal^\alpha$ as $k\to+\infty$. Finally, we can set $f:M\rightarrow\mathbb{R}$ by $f(p):=\int_1^2v^\alpha_\intercal(p)d\alpha$ and by the Monotone Convergence Theorem, we have that $f_k$ converges almost everywhere uniformly (since $\mu(M)<+\infty$) to $f$, which means that $f$ is almost everywhere continuous. If $p, q\in M$ are on the same leaf, then for each $\alpha\in[0,3]$ we have $I^-_\intercal(p;g_\intercal^\alpha)=I^-_\intercal(q;g_\intercal^\alpha)\implies v^\alpha_\intercal(p)=v^\alpha_\intercal(q)\implies f(p)=f(q)$, which means $f$ is basic.

Finally, let $p<_\intercal q$ and denote respectively by $L$ and $L^\prime$ their respective leaves. Then, $L<_{M/\mathcal{F}}L^\prime$, and $I^-_{M/\mathcal{F}}(L)\subset I^-_{M/\mathcal{F}}(L^\prime)$, by the push-up lemma \cite[Thm. 4.23]{us}. Suppose we had $I^-_{M/\mathcal{F}}(L)= I^-_{M/\mathcal{F}}(L^\prime)$. By transverse stable causality of $g_\intercal$, we would have $L<<_{M/\mathcal{F}; g_\intercal^\prime}L^\prime$ with $g^\prime_\intercal$ transversely causal. Then, $L^\prime\in\overline{I^-_{M/\mathcal{F}}(L;g^\prime_\intercal)}$, which would imply that $L<<_{M/\mathcal{F}, g^\prime_\intercal}L^\prime$, which contradicts the transverse causality of $g^\prime_\intercal$. Then, $I^-_{M/\mathcal{F}}(L)\neq I^-_{M/\mathcal{F}}(L^\prime)$, and thus we can pick some leaf $L^{\prime\prime}\in I^-_{M/\mathcal{F}}(L^\prime) \setminus I^-_{M/\mathcal{F}}(L)$. Since $I^-_{M/\mathcal{F}}(L^\prime)$ is open, we can choose some leaf $L^*$ such that $L^{\prime\prime}<<_{M/\mathcal{F}}L^*<<_{M/\mathcal{F}}L ^\prime$. Now, if we had $L^*\in \overline{I^-_{M/\mathcal{F}}(L)}$, then we'd have $L^{\prime\prime}\in I^-_{M/\mathcal{F}}(L)$, which is a contradiction. Therefore, $I^-_{M/\mathcal{F}}(L^\prime)\setminus\overline{I^-_{M/\mathcal{F}}(L)}$ is a non empty open set, and consequently, so is $I^-_\intercal(q)\setminus\overline{I^-_\intercal(p)}^\intercal$ back in $M$. Then, the latter set has positive measure, which implies that $v^0_\intercal(p)=\mu(I^-_\intercal(p))<\mu(I^-_\intercal(q))=v^0_\intercal(q)$. The same reasoning can be employed to show that $v^\alpha_\intercal(p)<v^\alpha_\intercal(q), \forall \alpha\in[0,3]$ whenever $p<_{g^\alpha_\intercal}q$. We thus conclude that $f$ increases along future-directed transversely causal curves, since those volume functions are lower semi-continuous.
\end{proof}

\subsection{Transverse temporal functions}\label{subsec9.3}
Recall that a \textit{temporal function} on a spacetime $(M,g)$ is a smooth (i.e., $C^\infty$) function $f: M\rightarrow \mathbb{R}$ with past-directed timelike gradient $\nabla^gf$. It clear that any temporal function is also a time function. We wish to define the transverse analogues of temporal functions for the Lorentzian foliation $(M,\mathcal{F},g_\intercal)$, so that in particular these are smooth transverse time functions. Proposition \ref{proptransvtimevstime} suggests it is enough to consider ($\mathcal{F}$-)\textit{basic} smooth functions. 

\begin{definition}[Transverse temporal function]\label{defitemporalfunction}
   A \emph{transverse temporal function} on $(M,\mathcal{F},g_\intercal)$ is a smooth function $f:M\rightarrow \mathbb{R}$ such that 
   \begin{equation}\label{tempdefieq}
   df(v)>0 \text{ for any future-directed transversely causal } v\in TM.
   \end{equation}
\end{definition}
It is clear that any transverse temporal function is also a transverse time function. Indeed, we have the following smooth version of Proposition \ref{proptransvtimevstime}.
\begin{proposition}\label{proptranvtempvstemp}
    For a smooth function $f:M\rightarrow \mathbb{R}$ the following statements are equivalent. 
    \begin{itemize}
     \item[i)] $f$ is a transverse temporal function on $(M,\mathcal{F},g_\intercal)$.
     \item[ii)] $f$ is basic and for any bundle-like Lorentz metric $g$ on $(M,\mathcal{F})$ associated with $g_\intercal$, $f$ is a temporal function on $(M,g)$.
     \item[iii)] $f$ is basic and there exists a bundle-like Lorentz metric $g$ on $(M,\mathcal{F})$ associated with $g_\intercal$ such that $f$ is a temporal function on $(M,g)$. 
 \end{itemize}
 Moreover, if one (and hence all) of these statements holds, then for any bundle-like Lorentz metric $g$ on $(M,\mathcal{F})$ associated with $g_\intercal$, $\nabla^g f$ is a (past-directed, timelike) $g$-horizontal and projectable vector field. \end{proposition}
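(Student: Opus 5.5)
We argue cyclically, $(i)\Rightarrow(ii)\Rightarrow(iii)\Rightarrow(i)$, and we prove the ``moreover'' clause inside the step $(i)\Rightarrow(ii)$. Throughout, fix $g\in\mathcal{BL}(g_\intercal)$ and write $TM=T\f\oplus T\f^{\perp_g}$. Since $g$ is adapted, the splitting $g=g_\intercal+g|_{T\f}$ of \eqref{usolocal} holds. In particular, a nonzero $v\in TM$ is $g$-causal only if it is non-vertical and transversely causal, and the time-orientations are compatible by convention.

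\textbf{Step $(i)\Rightarrow(ii)$.} First we show that $f$ is basic. Take any vertical $w\in T\f_x$ and any future-directed transversely timelike $v\in T_xM$. The set of future transversely timelike vectors at $x$ is open and invariant under adding vertical vectors, because $g_\intercal(v+tw,v+tw)=g_\intercal(v,v)$. Hence $v+tw$ is future-directed transversely timelike for every $t\in\mathbb{R}$. By \eqref{tempdefieq}, $df(v)+t\,df(w)>0$ for all $t$, which forces $df(w)=0$. So $df$ annihilates $T\f$; since leaves are connected, $f$ is constant on them, i.e.\ basic.

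Next we locate $\nabla^g f$. For vertical $w$ we have $g(\nabla^g f,w)=df(w)=0$, so $\nabla^g f$ lies in $T\f^{\perp_g}$. On horizontal vectors $g$ coincides with $g_\intercal$, and $g_\intercal$ is nondegenerate on $T\f^{\perp}$. Therefore $\nabla^g f$ is the unique horizontal vector $Z$ with $g_\intercal(Z,\cdot)=df$.

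We then show $Z$ is past-directed timelike. Restricted to the Lorentzian vector space $(T\f^\perp_x,g_\intercal)$, the covector $df$ is strictly positive on the future causal cone. By the standard linear-algebra fact for Minkowski space, its $g_\intercal$-dual is then past-directed timelike. Since $g(Z,Z)=g_\intercal(Z,Z)<0$, it follows that $f$ is temporal for $(M,g)$.

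Finally we prove projectability. Let $V$ be vertical and $X$ projectable. Then
\[
g_\intercal([V,Z],X)=V\bigl(g_\intercal(Z,X)\bigr)-g_\intercal(Z,[V,X])=V\bigl(X f\bigr),
\]
using that $g_\intercal$ is basic, so $\mathcal L_V g_\intercal=0$, and that $[V,X]$ is vertical. Moreover $Xf$ is basic: $V(Xf)=[V,X]f+X(Vf)=0$. Hence $g_\intercal([V,Z],X)=0$ for all projectable $X$. Since projectable fields span $T_xM$ locally, $[V,Z]$ lies in $\ker g_\intercal=T\f$, so $Z$ is projectable. This step, controlling the Lie derivative and checking that projectable fields span locally, is the only mildly delicate part.

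\textbf{Step $(ii)\Rightarrow(iii)$.} This is immediate once $\mathcal{BL}(g_\intercal)\neq\emptyset$, which holds by the construction after Definition \ref{def: SR foliations}.

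\textbf{Step $(iii)\Rightarrow(i)$.} This is the step I expect to be the main obstacle, because $g$-timelike vectors may carry vertical components. Let $f$ be basic and temporal for some $g\in\mathcal{BL}(g_\intercal)$, and let $v$ be future-directed transversely causal. Decompose $v=v^\top+v^\perp$ with respect to $g$. Since $f$ is basic, $df(v)=df(v^\perp)$. Now $g(v^\perp,v^\perp)=g_\intercal(v,v)\le 0$ and $v^\perp\neq0$, so $v^\perp$ is $g$-causal and future-directed; here we use that the future wedge is saturated by vertical translation, so $v^\perp$ lies in the same wedge as $v$. Because $\nabla^g f$ is past-directed timelike, $df(v^\perp)=g(\nabla^g f,v^\perp)>0$. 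Hence $df(v)>0$, which is \eqref{tempdefieq}.
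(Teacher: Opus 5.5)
Your proposal is correct, and its overall structure matches the paper's. Both prove the cycle of implications, both get horizontality of $\nabla^g f$ from $g(\nabla^g f,V)=Vf=0$, both get projectability from $\mathcal{L}_V g_\intercal=0$, and both prove $(iii)\Rightarrow(i)$ by passing to the $g$-horizontal part of $v$.

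The genuine difference is how you get basicness in $(i)\Rightarrow(ii)$. The paper notes that a transverse temporal function is in particular a transverse time function and invokes Proposition~\ref{proptransvtimevstime}. That proposition's basicness argument is curve-based and rests on the Causal Waterfall Lemma. You argue pointwise instead: the future wedge is invariant under vertical translations, so $df(v)+t\,df(w)>0$ for all $t\in\mathbb{R}$, which forces $df|_{T\f}=0$. Your route is shorter, self-contained, and uses the smoothness of $f$. The paper's route reuses a result that also covers merely continuous time functions, where no differential is available.

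There are two minor divergences:
\begin{itemize}
\item To show the gradient is past-directed timelike, you first place it in $T\f^\perp_x$ and apply the Minkowski duality fact to $(T\f^\perp_x,g_\intercal)$, which disposes of the null and zero cases at once. The paper argues by contradiction in all of $(T_pM,g)$: a spacelike gradient would have a Lorentzian orthogonal complement containing a timelike vector annihilated by $df$. It leaves the null case implicit.
\item For projectability, you test $[V,\nabla^g f]$ only against projectable $X$, so you need the fact that projectable fields span $T_xM$ locally. The paper's identity $V(Xf)-[V,X]f=X(Vf)=0$ holds for arbitrary $X$ and avoids that step.
\end{itemize}
You are also right that $(ii)\Rightarrow(iii)$ needs $\mathcal{BL}(g_\intercal)\neq\emptyset$, which the paper passes over.
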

\begin{proof}
   $(i)\Rightarrow (ii)$\\
   Since we are assuming that $f$ is transverse temporal, it is also transverse time, and we already know by Prop. \ref{proptransvtimevstime} that it is basic. Given any any bundle-like Lorentz metric $g$ on $(M,\mathcal{F})$ associated with $g_\intercal$, we only need to show its is $g$-causal, because \eqref{tempdefieq} implies in that case it is past-directed $g$-timelike. The same equation evidently implies $\nabla^g f$ is everywhere nonzero. Suppose then we have $g_p(\nabla^gf(p),\nabla^gf) >0$ at some $p\in M$, i.e., $\nabla^g f(p)$ is a spacelike vector. The Lorentzian signature then implies that $\nabla^gf(p)^\perp \subset T_pM$ would be a Lorentz vector space and hence posses some timelike vector orthogonal to $\nabla^f(p)$ (see, e.g., the discussion between Lemmas 5.26 and 5.27 in \cite{oneillbook}), in violation of \eqref{tempdefieq}. We conclude that $f$ is a temporal function.\\
   $(ii)\Rightarrow (iii)$ is immediate, so we focus on $(iii)\Rightarrow (i)$. Thus, assume $f$ is basic and we have a bundle-like Lorentz metric $g$ on $(M,\mathcal{F})$ associated with $g_\intercal$ such that $\nabla^g f$ is everywhere past-directed $g$-timelike. Let $V\in \mathfrak{X}(\mathcal{F})$ be any vertical vector field. Since $f$ is basic we have
   $$0= Vf = g(\nabla^g f,V),$$
   whence we conclude that $\nabla^gf$ is $g$-horizontal. But then, given that any future-directed transversely causal vector $v\in TM$ has a $g$-causal horizontal part, we get 
   $$df(v)= g(\nabla^gf,v) = g(\nabla^gf, v_H) >0,$$
   where we have used that $\nabla^g f$ is past-directed $g$-timelike for the last inequality.

   All that remains to be seen is that $\nabla^gf$ is foliate. To that end, pick any vector field $X \in \mathfrak{X}(M)$ and any vertical vector field $V\in \mathfrak{X}(\mathcal{F})$. We now compute
   \begin{eqnarray}
    g([V,\nabla^g f ]_H,X) &=& g([V,\nabla^g f ]_H,X_H) = g_\intercal([V,\nabla^g f ],X) \nonumber \\
    &\stackrel{\mathcal{L}_Vg_\intercal =0}{=}& V(g_\intercal(\nabla^g f,X)) - g_\intercal(\nabla^gf, [V,X]) \nonumber \\
       &\stackrel{\text{$\nabla^gf$ is horizontal}}{=}& V(g(\nabla^g f,X)) - g(\nabla^gf, [V,X]) \nonumber \\
       &=& V(Xf) - [V,X]f \nonumber \\
       &=& X(Vf) \equiv 0, \nonumber
   \end{eqnarray}
   since $f$ is annihilated by $V$ for being basic. We conclude that $[V,\nabla^g f]_H\equiv 0$, and hence $[V,\nabla^g f]$ is indeed vertical, that is, $\nabla^g f$ is projectable as claimed.
\end{proof}

A natural question to be asked is whether the existence of a transverse time function implies the existence of a transverse temporal function. One could expect that the smoothing process employed in \cite{Bernal_2005} for the classical case would do. However, it is not clear for us how to carry out such a procedure without spoiling the property that the function is basic, so we defer this matter to future work. The main result of this section is the following.

\begin{teo}\label{tempfunctstable}
    If the Lorentzian foliation $(M,\mathcal{F},g_\intercal)$ admits a transverse temporal function, or equivalently, if there exists a bundle-like Lorentzian metric $g$ for $(M,\mathcal{F})$ associated with $g_\intercal$ such that $(M,g)$ admits a basic temporal function, then $(M,\mathcal{F},g_\intercal)$ is transversely stably causal.
\end{teo}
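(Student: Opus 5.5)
The plan is to widen the transverse wedges of $g_\intercal$ using the basic $1$-form $df$ itself. Then the transverse temporal function $f$ stays a transverse time function for the widened metric, and that will rule out transverse causal loops. The equivalence of the two hypotheses in the statement is exactly Proposition \ref{proptranvtempvstemp}, so I assume throughout that $f$ is a transverse temporal function.

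The first step is to build the candidate metric. Fix any $g\in\mathcal{BL}(g_\intercal)$ and write $\nabla f:=\nabla^g f$. By Proposition \ref{proptranvtempvstemp}, $\nabla f$ is past-directed timelike, $g$-horizontal and projectable. For a positive basic function $\lambda$ (to be chosen), define
$$g'_\intercal := g_\intercal - \lambda\, df\otimes df .$$
Three properties must be checked.
\begin{enumerate}
\item $g'_\intercal$ is symmetric and horizontal, since $df$ kills vertical vectors because $f$ is basic.
\item $g'_\intercal$ is holonomy-invariant. For vertical $V$ we have $\mathcal{L}_V df = d(Vf)=0$ and $V\lambda=0$, while $\mathcal{L}_V g_\intercal=0$.
\item $g'_\intercal$ has index $1$ with kernel exactly $T\f$. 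Work on the $g$-horizontal space $\mathcal{H}_p$, where $g_\intercal$ is Lorentzian and $df=g(\nabla f,\cdot)$ with $\nabla f\in\mathcal H_p$ timelike. Splitting $\mathcal H_p=\mathbb{R}\nabla f\oplus(\nabla f)^\perp$, the form $g'_\intercal$ coincides with $g_\intercal$ on $(\nabla f)^\perp$ (positive definite), the two summands remain orthogonal, and
$$g'_\intercal(\nabla f,\nabla f)=g(\nabla f,\nabla f)-\lambda\, g(\nabla f,\nabla f)^2<0 .$$
\end{enumerate}
So $g'_\intercal\in\mathcal{TL}(M,\f)$. Taking $\lambda\equiv 1$ already works, so no real choice of $\lambda$ is needed.

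The second step is $g_\intercal<g'_\intercal$. Let $v\notin T\f$ with $g_\intercal(v,v)\le 0$. Then $v$ is transversely causal and, up to sign, future-directed, so \eqref{tempdefieq} gives $df(v)\neq 0$. Hence $g'_\intercal(v,v)=g_\intercal(v,v)-\lambda\, df(v)^2<0$. By Remark \ref{rmkstablycausalimpliescausal}, the future wedges of $g'_\intercal$ are the ones containing those of $g_\intercal$.

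The third step, which I expect to be the main obstacle, is to show that $f$ is still a transverse temporal function for $g'_\intercal$. The difficulty is that widening the cones could in principle create future-directed $g'_\intercal$-causal vectors with $df(v)\le 0$. I argue by connectedness of the future $g'_\intercal$-wedge at each point. On that wedge, $df$ cannot vanish, because $df(v)=0$ forces $g'_\intercal(v,v)=g_\intercal(v,v)\le0$. That makes $v$ $g_\intercal$-causal, and then \eqref{tempdefieq} gives $df(v)\neq0$, a contradiction. Since the wedge is connected and contains the $g_\intercal$ future wedge, where $df>0$, it follows that $df>0$ on the whole $g'_\intercal$ future wedge.

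Finally, suppose $\gamma$ is a future-directed $g'_\intercal$-transversely causal curve joining a leaf $L$ to itself. Then $f\circ\gamma$ is strictly increasing. But $f$ is basic, so $f$ takes the same value at both endpoints, which is a contradiction. Therefore $(M,\f,g'_\intercal)$ is transversely causal, and $(M,\f,g_\intercal)$ is transversely stably causal.
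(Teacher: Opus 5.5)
Your proof is correct and is essentially the paper's argument: after the paper normalizes to $g(\nabla^g f,\nabla^g f)=-1$ by a conformal change, its metric $g^\lambda_\intercal$ is exactly $g_\intercal-(\lambda-1)\,df\otimes df$, which is your $g_\intercal-\lambda\,df\otimes df$ up to relabeling the positive basic factor. You streamline it in two ways. First, you work directly with the transverse tensor, so holonomy-invariance reduces to $\mathcal{L}_V df=0$, with no need for the conformal-rescaling lemmas or the bundle-like Claims 1--3. Second, you use connectedness of the widened future wedge to show explicitly that $df>0$ there, hence that $g'_\intercal$ is transversely causal, a step the paper only asserts.
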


Before proving this theorem we need a few technical lemmas.

\begin{lemma}\label{templemma1}
Let $(M,\mathcal{F},g_\intercal)$ be a Lorentzian foliation, and let $\Omega >0$ be a smooth $\mathcal{F}$-basic function. If we define 
$$\hat{g}_\intercal := \Omega^2 g_\intercal,$$
then $(M,\mathcal{F},\hat{g}_\intercal)$ is a Lorentzian foliation, which is transversely stably causal if and only if $(M,\mathcal{F},g_\intercal)$ is. 
\end{lemma}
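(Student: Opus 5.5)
The plan has two parts: first check that $\hat g_\intercal$ is a transverse Lorentzian metric, and then prove the equivalence of transverse stable causality. For the first part, $\hat g_\intercal=\Omega^2 g_\intercal$ is a smooth symmetric $(0,2)$-tensor field. It is horizontal, because $g_\intercal$ is horizontal and $\Omega^2$ is a scalar factor. It is holonomy-invariant: for any $V\in\mathfrak{X}(\f)$ we have $\mathcal{L}_V(\Omega^2 g_\intercal)=V(\Omega^2)\,g_\intercal+\Omega^2\mathcal{L}_Vg_\intercal=0$, since $\Omega$ is basic (so $V\Omega=0$) and $g_\intercal$ is basic. Since $\Omega>0$, at each point $\hat g_\intercal(x)$ has the same kernel $T\f_x$ and the same index $1$ as $g_\intercal(x)$. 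Hence $(\f,\hat g_\intercal)$ is a Lorentzian foliation. Moreover, transverse causal characters of vectors are unchanged, since $\hat g_\intercal(v,v)$ and $g_\intercal(v,v)$ have the same sign. So the causal wedges coincide, and we give $\hat g_\intercal$ the same transverse time orientation.

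For the equivalence, I would first observe that the strict order $<$ on $\mathcal{TL}(M,\f)$ is invariant under multiplication by positive basic functions. Explicitly, for positive basic $\Omega,\Omega'$ we have $g_\intercal<g'_\intercal$ if and only if $\Omega^2g_\intercal<\Omega'^2g'_\intercal$: the defining implication $g_\intercal(v,v)\le0\Rightarrow g'_\intercal(v,v)<0$ involves only signs, and these are preserved by positive factors. Next, I would note that transverse causality depends only on the transverse causal cones/wedges. Indeed, whether a piecewise smooth curve is future-directed transversely causal depends only on the future causal wedges. So $(M,\f,k_\intercal)$ is transversely causal if and only if $(M,\f,\Omega^2k_\intercal)$ is.

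With these two observations the equivalence is immediate. Suppose $(M,\f,g_\intercal)$ is transversely stably causal, witnessed by $g'_\intercal$. Then $\hat g'_\intercal:=\Omega^2 g'_\intercal$ is a transverse Lorentzian metric (by the first paragraph applied to $g'_\intercal$). It satisfies $\hat g_\intercal<\hat g'_\intercal$, and it is transversely causal. The time orientation of $\hat g'_\intercal$ follows the convention of Remark \ref{rmkstablycausalimpliescausal}, and it agrees with that of $g'_\intercal$ because the wedges coincide. For the converse, apply the same argument with the basic positive function $\Omega^{-1}$, since $g_\intercal=\Omega^{-2}\hat g_\intercal$.

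There is no real obstacle here. The only point needing care is the holonomy-invariance computation, and this is exactly where the hypothesis that $\Omega$ is basic is essential. If $\Omega$ were not basic, $\Omega^2g_\intercal$ would fail to be $\f$-basic and would not even define a transverse metric.
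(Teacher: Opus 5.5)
Your proof is correct and follows the same route as the paper: holonomy-invariance from $V\Omega=0$, preservation of kernel, index and causal signs by the positive factor, and the converse via $\Omega^{-1}$. The one small difference is that the paper keeps the original witness $h_\intercal$, since $\hat g_\intercal(v,v)\le 0\Rightarrow g_\intercal(v,v)\le 0\Rightarrow h_\intercal(v,v)<0$ already gives $\hat g_\intercal<h_\intercal$; so it never needs to rescale the witness to $\Omega^2 g'_\intercal$ or to invoke the conformal invariance of transverse causality, as you do.
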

\begin{proof}
 For the first statement, just note that since $\Omega$ is basic, for any vertical vector field $V\in \mathfrak{X}(\mathcal{F})$ we have
 $$
 \mathcal{L}_V\hat{g}_\intercal = \cancelto{0}{2V(\Omega)} g_\intercal + \Omega ^2 \cancelto{0}{\mathcal{L}_Vg_\intercal} \equiv 0,
 $$
 and checking the other clauses of the definition of a transverse Lorentz metric is straightforward. 

 For the second statement, suppose $(M,\mathcal{F},g_\intercal)$ is transversely stably causal, and fix a point $p\in M$ and some nonvertical vector $v\in T_pM$. We can then pick a transversely causal Lorentzian metric $h_\intercal$ on $(M,\mathcal{F})$ such that 
 $$g_\intercal < h_\intercal.$$
 Thus,
 $$\hat{g}_\intercal(v,v)\leq 0 \Rightarrow g_\intercal(v,v) \leq 0 \Rightarrow h_\intercal(v,v) <0,$$
 whence we conclude that $\hat{g}_\intercal < h_\intercal$. For the converse, just work with the positive basic function $1/\Omega$. 
\end{proof}
\begin{lemma}\label{templemma2}
Let $(M,\mathcal{F},g_\intercal)$ be a Lorentzian foliation, and let $\Omega >0$ be a smooth $\mathcal{F}$-basic function. If $g$ is a bundle-like Lorentzian metric associated with $g_\intercal$, then $\hat{g}:= \Omega^2 g$ is a Lorentzian bundle-like metric associated with
$\hat{g}_\intercal := \Omega^2 g_\intercal.$ 
\end{lemma}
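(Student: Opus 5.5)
The plan is to check directly the three ingredients in the definition of a bundle-like metric associated with $\hat g_\intercal$. First, $\hat g=\Omega^2 g$ is a Lorentzian metric: pointwise it is a positive rescaling of $g$, so it is smooth, nondegenerate, and of index one. Second, $\hat g$ is adapted to $\f$. Scaling by $\Omega^2(x)>0$ does not change $g$-orthogonality, so $T\f^{\perp_{\hat g}}=T\f^{\perp_g}$. This is a horizontal distribution because $g$ is adapted. In particular, the vertical and horizontal parts $X^\top, X^\perp$ are the same for $g$ and $\hat g$.

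Next I verify condition ii) of Definition \ref{def bundle-like metric}. Let $X,Y$ be projectable, $\hat g$-horizontal vector fields on an open set $\mathcal U$. By the remark above they are also $g$-horizontal, so $g(X,Y)$ is basic on $\f|_{\mathcal U}$. Since $\Omega$ is basic, for any vertical $V$ we have
$$V(\hat g(X,Y))=2\Omega V(\Omega)\,g(X,Y)+\Omega^2 V(g(X,Y))=0.$$
Hence $\hat g(X,Y)$ is basic, and $\hat g$ is bundle-like.

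Finally, I check that $\hat g$ is associated with $\hat g_\intercal$. For any $X,Y$,
$$\hat g(X^\perp,Y^\perp)=\Omega^2 g(X^\perp,Y^\perp)=\Omega^2 g_\intercal(X,Y)=\hat g_\intercal(X,Y),$$
where the horizontal parts coincide for both metrics as noted. By Lemma \ref{templemma1}, $\hat g_\intercal$ is a transverse Lorentzian metric, so this completes the argument. None of these steps is a real obstacle. The only point needing care is the observation that the orthogonal splitting is unchanged under conformal rescaling, so that "horizontal" means the same thing for $g$ and $\hat g$.
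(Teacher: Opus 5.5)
Your proposal is correct and follows essentially the same route as the paper: conformal rescaling leaves the orthogonal complement of $T\f$ unchanged, so horizontality means the same for $g$ and $\hat g$, and the Leibniz computation $V(\hat g(X,Y))=2\Omega V(\Omega)\,g(X,Y)+\Omega^2 V(g(X,Y))=0$ gives the bundle-like condition, after which the association with $\hat g_\intercal$ is immediate. Your coefficient $2\Omega V(\Omega)$ is in fact the correct one (the paper's display writes $2V(\Omega)$, which is harmless since $V(\Omega)=0$ either way).
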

\begin{proof}
 First, observe that since $g$ and $\hat{g}$ are conformally related, $\forall p\in M$ we have 
 $$(T_pL_p)^{\perp_{g}}= (T_pL_p)^{\perp_{\hat{g}}},$$
 i.e., the horizontal distributions of $g,\hat{g}$ are the same. Therefore, given $\hat{g}$-horizontal projectable vector fields $X,Y\in \mathfrak{L}(\mathcal{F})$ we have, for any $V\in \mathfrak{X}(\mathcal{F})$,
 $$V(\hat{g}(X,Y)) = 2\cancelto{0}{V(\Omega)} g(X,Y) + \Omega ^2 \cancelto{0}{V(g(X,Y))} \equiv 0,$$
 where the first terms on the right-hand side in the previous equation vanishes because $\Omega$ is basic, while the second vanishes because $g$ is bundle-like and $X,Y$ are $g$-horizontal projectable fields. 

 We conclude that $\hat{g}$ is bundle-like, and it is clearly associated with $\hat{g}_\intercal$. 
\end{proof}

\begin{proof}[Proof of Theorem \ref{tempfunctstable}]
   Let $g$ be bundle-like Lorentzian metric associated with $g_\intercal$ and let $f\in C^\infty(M)$ be an $\mathcal{F}$-basic function such that $\nabla^g f$ is past-directed timelike on $(M,g)$. 

   By Prop. \ref{proptranvtempvstemp}, $\nabla^g f$ is a $g$-horizontal and projectable vector field. Therefore, the function 
   $$\Omega:= \sqrt{-g(\nabla^g f,\nabla^g f)}$$
   is a smooth positive $\mathcal{F}$-basic function. 
Let $\hat{g}:= \Omega^2 g$, $\hat{g}_\intercal := \Omega^2 g_\intercal$. By Lemma \ref{templemma1}, it is enough to show that $(M,\mathcal{F}, \hat{g}_\intercal)$ is stably causal, and by Lemma \ref{templemma2} $\hat{g}$ is a bundle-like Lorentzian metric associated with $\hat{g}_\intercal$. In addition, 
$$g(\nabla ^g,X) = Xf = \hat{g}(\nabla^{\hat{g}}f,X) = g(\Omega^2 \nabla^{\hat{g}}f,X), \quad X\in \mathfrak{X}(M),$$
whence 
$$\nabla^{\hat{g}}f =\Omega^{-2} \nabla ^gf $$
and thus
$$\hat{g}(\nabla^{\hat{g}}f,\nabla^{\hat{g}}f) = \Omega^{-2} g(\nabla^g f,\nabla^g f) \equiv -1.$$

To summarize, we conclude there is no loss of generality in just assuming $g(\nabla^g f,\nabla^g f)=-1$ by changing $g_\intercal,g$ by $\hat{g}_\intercal,\hat{g}$ if necessary, and we do for the rest of the proof, dropping altogether the hat symbol $\, \hat{}$. 

It is henceforth convenient to adopt, for any $X \in\mathfrak{X}(M)$, the decomposition
\begin{equation}\label{decompkerdf}
   X= -df(X)\nabla ^g f + X_{\ker df}, 
\end{equation}
where $X_{\ker df}$ denotes the $g$-orthogonal projection on the smooth codimension 1 distribution $\ker df\subset TM$. We also fix a number $\lambda >1$, and define a symmetric $(0,2)$-tensor field on $M$ by
\begin{equation}\label{lambdametric}
  g_\lambda(X,Y) := -\lambda df(X)df(Y) + g(X_{\ker df },Y_{\ker df}), \quad \forall X,Y \in  \mathfrak{X}(M). 
\end{equation}
We now proceed by a sequence of technical claims. \\

\noindent $\vdash$ \textit{Claim 1:} $g_\lambda$ is a Lorentz metric on $M$, and $\nabla^{g_\lambda}f$ is $g_\lambda$-timelike on $(M,g_\lambda)$.\\
Fix any $p \in M$, and let $E_2,\ldots,E_n$ denote some $g$-orthonormal frame of vector fields defined on an open set $U\ni p$ spanning the distribution $\ker df$ on $U$, Thus, if we put 
 
$$E_1:= \frac{\nabla^g f|_U}{\sqrt{\lambda}},$$ 
then we easily check that $g_\lambda(E_i,E_j)=\delta_{ij}$ for any $i,j\in \{2,\ldots, n\}$ and $g_\lambda(E_1,E_1) =-1$. In other words, $\{E_1, \ldots, E_n\}$ is a $g_\lambda$-orthonormal frame, from whose existence around any point of $M$ means that $g_\lambda$ is indeed a Lorentzian metric. In particular, on $U$ we have  
\begin{eqnarray*}
   \nabla^{g_\lambda}f &=& -g_\lambda(\nabla^{g_\lambda}f,E_1)E_1 +\sum_{i=2}^n \cancelto{0}{g_\lambda(\nabla^{g_\lambda}f,E_i)} \\
   &=& -\frac{df(\nabla^{g}f)}{\lambda}\nabla^{g}f \\
   &=& \frac{\nabla^{g}f}{\lambda} \equiv \frac{E_1}{\sqrt{\lambda}},
\end{eqnarray*}
which shows that $\nabla^{g_\lambda}f$ is indeed $g_\lambda$-timelike. \\

\noindent $\vdash$ \textit{Claim 2:} a $X\in \mathfrak{X}(M)$ is $g_\lambda$-horizontal if and only if $X$ is $g$-horizontal. \\
Indeed, given $X \in \mathfrak{X}(M)$ and $V \in \mathfrak{X}(\mathcal{F})$, $V$ is obviously a section of $\ker df$, and in particular $V=V_{\ker df}$, and thus,
$$g_\lambda(X,V) = g(X_{\ker df},V_{\ker df}) = g(X,V),$$
whence the Claim follows. $\dashv$ \\

\noindent $\vdash$ \textit{Claim 3:} $g_\lambda$ is bundle-like on $(M,\mathcal{F})$.\\
Let $X,Y \in \mathfrak{L}(\mathcal{F})$ be projectable $g$-horizontal vor fields (so by Claim 2 they are also $g_\lambda$ horizontal). Given any $V\in \mathfrak{X}(\mathcal{F})$ we have 
\begin{eqnarray*}
    [V,X_{\ker df}] &\stackrel{\eqref{decompkerdf}}{=}& [V,df(X)\nabla ^g +X] \\
    &=& V(g(\nabla^gf,X))\nabla^gf + df(X)[V,\nabla^g f] + [V,X],
\end{eqnarray*}
which is vertical because both $X$ and $\nabla ^gf$ are projectable (conf. Prop. \ref{proptranvtempvstemp}) and $g$ is bundle-like, so in particular 
$$V(g(\nabla^gf,X)) = V(df(X)) =0,$$
We conclude that $X_{\ker df} \in \mathfrak{L}(\mathcal{F})$, and analogously $Y_{\ker df }$ is also projectable. Therefore,
$$V(g_\lambda(X,Y)) = - \lambda \cancelto{0}{V(df(X))}df(Y) -\lambda df(X)\cancelto{0}{V(df(Y))} + V(g(X_{\ker df},Y_{\ker df})) \equiv 0,$$
whence the proof of the Claim is complete. $\dashv$\\

Given Claims 1-3, it follows that the symmetric $(0,2)$-tensor field $g^\lambda _\intercal$ on $M$ defined by 
$$g^\lambda _\intercal(X,Y) := g_\lambda(X_H,Y_H), \quad \forall X,Y \in \mathfrak{X}(M),$$
where the subscript $H$ indicates both $g_\lambda$ and $g$-horizontality as per Claim 2, defines a transversely causal transverse Lorentzian metric on $(M,\mathcal{F})$. 

Finally, let $v\in TM$ be a $g$-transversely causal vector, so in particular $g(v_H,v_H) \leq 0$. We now compute 
\begin{eqnarray*}
    g^\lambda_\intercal(v,v) &=& - \lambda (df(v_H))^2 + g((v_H)_{\ker df},(v_H)_{\ker df}) \\
    &<& - (df(v_H))^2 + g((v_H)_{\ker df},(v_H)_{\ker df}) \\
    &\equiv & g(v_H,v_H) \leq 0,
\end{eqnarray*}
so we conclude that 
$$g_\intercal < g^\lambda _\intercal,$$
and and thus that $(M,\mathcal{F},g_\intercal)$ is indeed transversely stably causal. 
\end{proof}

\begin{exe}[Spatially homogeneous foliations]\label{homogeneousexe}
    {\em 
    
Let $(M,g)$ be a spacetime. Assume a Lie group $G$ has an isometric (say left) smooth action $\theta$ on $M$ so that all the orbits of the action are (maybe only immersed) submanifolds of fixed codimension. Thus, these orbits comprise a so-called \emph{homogeneous foliation} $\f$ of $M$. We further assume that each orbit is a \textit{spacelike} (again, maybe only immersed) submanifold of codimension $\geq2$, and refer to the foliation as \textit{spatially homogeneous} in this case. The fact that $\theta$ is isometric with spacelike orbits implies that the metric $g$ is bundle-like for $(M,\f)$, which has thus a natural structure of a transversely time-oriented Lorentzian foliation. Furthermore, if in addition $(M,g)$ admits a smooth temporal function $f:M\rightarrow \mathbb{R}$ which is invariant by the action $\theta$ (which is not always the case - see Remark \ref{quitecutermk}), then  by Proposition \ref{proptranvtempvstemp} $f$ is a transverse temporal function for that Lorentzian foliation, which is therefore transversely stably causal by Theorem \ref{tempfunctstable}. 
    
    }
    
\end{exe}

\begin{exe}[Foliations by suspensions]\label{suspensionexe}
    {\em 
    
Let $(S^n,h)$ be a stably causal spacetime, and fix a smooth temporal function $f:S\rightarrow \mathbb{R}$ thereon. Let $B^k$ be some smooth connected manifold and assume there is a smooth (say left) action $\alpha$ of $\pi_1(B)$ on $(S,h)$ by isometries leaving $f$ invariant. Denote by $\rho:\hat{B}\rightarrow B$ the universal covering map of $B$. Fix any Riemannian  metric $\eta$ on $B$, so that the pullback metric $\hat{\eta}:= \rho^\ast \eta$ on $\hat{B}$ is also Riemannian. The metric $\hat{g}=\hat{\eta }\oplus h$ on $\hat{M}:= \hat{B} \times S$ is naturally a time-oriented Lorentzian metric, and the smooth function $\hat{f}:\hat{M}\rightarrow \mathbb{R}$ given by 
$$\hat{f}(\hat{b},x) := f(x), \quad \forall \hat{b} \in \hat{B}, \forall x\in S$$
 is a temporal function on $(\hat{M}, \hat{g})$, which is therefore also a  stably causal spacetime. Consider the foliation $\hat{\f}$ given by the fibers of the submersion $proj_2:(\hat{b},x) \in \hat{M}\mapsto x \in S$. It is clear that $\hat{g}$ is bundle-like with respect to $\hat{\f}$ and $\hat{f}$ is $\hat{\f}$-basic, so this foliation is a transversely stably causal foliation. Now, there is a smooth, free, proper right action $\hat{\alpha}$ of $\pi_1(B)$ on $\hat{M}$ given by 
 $$(\hat{b},x) \in \hat{M}\mapsto (\hat{b}\cdot [\gamma],\alpha_{[\gamma]^{-1}}(x)) \in \hat{M}, \quad \forall [\gamma] \in \pi_1(B), $$
 where in the first slot we have used the canonical right action of $\pi_1(B)$ on $\hat{B}$ by deck transformations, so that $\hat{\alpha}$ is $\hat{g}$-isometric and takes leaves of $\hat{\f}$ into leaves. We conclude that the quotient $M:= \hat{M}/\pi_1(B)$ has the structure of a smooth manifold and inherits an induced foliation $\f$ and a Lorentz metric $g$ which remains bundle-like with respect to $\f$. Since $\hat{f}$ is invariant by $\hat{\alpha}$ it induces a smooth $\f$-basic temporal function $\overline{f}$ on $(M,g)$. By Proposition \ref{proptranvtempvstemp} it is a transverse temporal function for the associated Lorentzian foliation, which is therefore transversely stably causal by Theorem \ref{tempfunctstable}. $\f$ is said to be given by a \textit{suspension} of the action $\alpha$. (A simple particular case is obtained by fixing any isometry $\phi:S\rightarrow S$, taking $B=\mathbb{S}^1$, so that  $\pi_1(B) = \mathbb{Z}$, and defining $\alpha_m(x) := \phi^m(x), \forall x \in S, \forall m \in \mathbb{Z}$.)
    }
    
\end{exe}

\subsection{Spatial symmetries in standard stationary spacetimes}\label{stationarysubsec}

 The class of spatially homogeneous foliations in a spacetime $(M^{n+1},g)$ discussed in the Example \ref{homogeneousexe} above, to be relevant, requires the existence of a temporal function which is invariant by a Lie group of isometries with spacelike orbits. One might wonder if such functions exist in a sufficiently broad, interesting class of known spacetimes. We show in this subsection that they do in the class of \textit{standard stationary spacetimes} provided these have added, ``spatial'' symmetries. 

More precisely, we assume that there exist a connected Riemannian manifold $(M_0^n,g_0)$, a 1-form $\omega_0 \in \Omega^1(M_0)$ and a smooth positive function $\beta_0 \in C^\infty(M_0)$ such that $M=\mathbb{R}\times M_0$ and 
\begin{equation}\label{standardstationaryeq}g=  -\beta_0^2d pr_1\otimes d pr_1 +pr_2^\ast\omega_0\otimes d pr_1 + d pr_1 \otimes pr_2^\ast \omega_0 + pr_2^\ast g_0,\end{equation}
where $pr_1: \mathbb{R}\times M_0 \rightarrow \mathbb{R}$ and $pr_2 : \mathbb{R}\times M_0 \rightarrow M_0$ are the standard projections onto the first and second factor, respectively. (By a classic result of Javaloyes and Sánchez \cite{javasanchez} a spacetime $(M,g)$ is isometric to such a product with metric as in \eqref{standardstationaryeq} if it is distinguishing and admits a complete timelike Killing vector field.)

We assume in addition that $(M_0,g_0)$ carries a smooth isometric left action $\alpha$ of a Lie group $G$ which preserves $\beta$ and $\omega_0$
$$\alpha_\eta^\ast \omega_0 = \omega_0 \text{ and }\beta_0\circ \alpha_\eta = \beta_0, \quad \forall \eta \in G.$$
We assume, in addition, the orbits to have a fixed codimension $\geq 1$ in $M_0$. Then the action obviously induces an isometric action $\hat{\alpha}$ in $(M,g)$ with spacelike orbits by 
$$\hat{\alpha}_\eta (t,x) := \alpha_\eta(x), \quad \forall \eta \in G, \forall t \in \mathbb{R}, \forall x\in M_0. $$
Therefore, we get a spatially homogeneous foliation $\f$ in $M$ as in Example \ref{homogeneousexe} for which $g$ is bundle-like. We claim: 
\begin{proposition}\label{stationaryexampleprop}
 The function $f:= pr_1$ is a temporal $\f$-basic function, so $\f$ is a transversely stably causal foliation. 
\end{proposition}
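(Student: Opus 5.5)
Three things have to be checked: that $f=pr_1$ is $\f$-basic, that it is a temporal function for $g$, and then that Proposition \ref{proptranvtempvstemp} and Theorem \ref{tempfunctstable} apply (equivalently, Example \ref{homogeneousexe}).

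Basicity is immediate. The induced action is $\hat\alpha_\eta(t,x)=(t,\alpha_\eta(x))$; the displayed formula drops the first coordinate, but this is clearly what is meant. So every leaf of $\f$ is contained in a slice $\{t\}\times M_0$, and $pr_1$ is constant on leaves. Equivalently, $df=dt$ annihilates the vertical fields, which are tangent to the $M_0$ factor. Invariance of $g$ under $\hat\alpha$ follows from $\alpha_\eta^\ast g_0=g_0$, $\alpha_\eta^\ast\omega_0=\omega_0$ and $\beta_0\circ\alpha_\eta=\beta_0$. Orbits lie in the spacelike slices, hence are spacelike, and have codimension $\geq 2$ in $M$. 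Thus $g$ is bundle-like and we are in the setting of Example \ref{homogeneousexe}.

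The main step is to show that $\nabla^g t$ is timelike, i.e. $g^{-1}(dt,dt)<0$. The plan is to avoid inverting $g$ and instead use a standard linear-algebra fact. For a Lorentzian metric and a covector $\theta\neq 0$, $\theta$ is timelike if and only if $\ker\theta$ is a spacelike hyperplane. Here $\ker dt|_{(t,x)}=\{0\}\times T_xM_0$, and $g$ restricted to it is $g_0$, which is positive definite. Hence $\nabla^g t$ is timelike; equivalently $g(\nabla^g t,\nabla^g t)=-1/(\beta_0^2+|\omega_0|^2_{g_0})$ by an explicit inverse, which can serve as an optional check. Fix the time orientation so that $\partial_t$ is future-directed; this is legitimate because $g(\partial_t,\partial_t)=-\beta_0^2<0$. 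Then
$$g(\nabla^g t,\partial_t)=dt(\partial_t)=1>0,$$
so $\nabla^g t$ lies in the opposite timecone from $\partial_t$ and is past-directed. Hence $f$ is a temporal function on $(M,g)$.

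Since $f$ is basic and temporal for the bundle-like metric $g$, which is associated with its transverse metric, Proposition \ref{proptranvtempvstemp} (iii)$\Rightarrow$(i) shows that $f$ is a transverse temporal function. Theorem \ref{tempfunctstable} then gives that the foliation is transversely stably causal. The only step needing care is the timelike-gradient claim; everything else is bookkeeping. If the hyperplane argument seems too terse, I would write out $g^{-1}$ in the block form coming from \eqref{standardstationaryeq} to confirm it.
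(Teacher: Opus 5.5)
Your proposal is correct and follows essentially the paper's route: you verify that $pr_1$ is constant on the orbits, show $g(\nabla^g f,\nabla^g f)=g^{00}<0$, and conclude via Proposition~\ref{proptranvtempvstemp} and Theorem~\ref{tempfunctstable}. The paper gets $g^{00}<0$ from the cofactor identity $g^{00}=\det[(g_0)_{ij}]/\det[g_{ab}]$, which is the determinant form of your observation that $\ker df$ is $g_0$-spacelike. Your check that $\nabla^g f$ is past-directed, via $g(\nabla^g f,\partial_t)=1$, makes explicit a point the paper leaves implicit.
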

\begin{proof}
  Since $f$ is constant on the orbits of $\hat{\alpha}$ by construction, all that remains to be shown is that $\nabla ^gf$ is indeed timelike. But this is clear, because if we introduce local coordinates $(x^1, \ldots,x^n)$ on $M_0$, so that together with $f=:x^0$ they define a local coordinate system for $M$, we have, via the $(00)$-cofactor, 
  $$g(\nabla^g f,\nabla^g f) \equiv  g^{00} = \frac{\det [(g_0)_{ij}]}{\det [g_{ab}]} <0,$$
  since $g_0$ is positive-definite while $g$ has Lorentz signature and hence a negative determinant. 
  
\end{proof}
Finally, we observe that the assumptions in this subsection do hold, e.g., in the class of \textit{stationary and axisymmetric} models describing the all-important Kerr-Newman family of black holes in General Relativity (see \cite{wald_general_1984} for more details). 

\section{Transverse $K$-causality}\label{sec:k-causal}

Among the different equivalent formulations of stable causality, so far we have dealt with the ones with a more evident geometric aspect (items 1-4 in Figure (\ref{equivalences})). Nonetheless, there exists an extensive literature (see \textit{e.g.} \cite{Minguzzi_2007a}, \cite{Minguzzi_2007}, \cite{Minguzzi_2009}) on a relation-theoretic (items 5-6 in the Introduction) approach to stable causality. We give a number of partial transverse analogs here.

Fix a transversely time-oriented Lorentzian foliation $(M,\f,g_\intercal)$. The analogues of Seifert's and $K^+$ relations on $M/\f$ is straightforward. Indeed, $$J^+_{S\intercal}:=\bigcap_{g_\intercal^\prime>g_\intercal}J^+_{M/\mathcal{F},g^\prime_\intercal}$$ is called the \textit{transverse Seifert relation}, whereas the \textit{transverse $K^+$ relation} on $(M, \mathcal{F}, g_\intercal)$ is defined as the smallest closed and transitive relation on $M/\mathcal{F}$ which contains the transverse causal relation $J^+_{M/\mathcal{F}}$. 

The contrast with the usual theory arises when one tries to define a transverse analogue of $K$-causality. In fact, we shall \textit{not} define $(M,\mathcal{F},g_\intercal)$ to be transversely $K$-causal when the $K^+_\intercal$ relation is antisymmetric. Indeed, \textit{doing so would immediately make it false that transverse stable causality implies $K$-causality}, as the next simple example shows.

\begin{exe}\label{exem-antisym-fails}
    {\em Consider the foliation $\f$ on $M:=\mathbb{R}^3\setminus\{(0,0,0)\}$ given by the connected components of the fibers of the submersion $(t,x,y)\mapsto(t,x)$, and let $z_1=(0,0,-1), z_2=(0,0,1)$. If we endow the foliation $\f$ with the transverse Lorentzian metric $g_\intercal=-dt^2+dx^2$, it is clear that the flat Minkowski metric $\eta$ restricted to $M$ is bundle-like, and $f\in C^\infty(M)$ given by $f(t,x,y)=t$ is $g$-temporal and $\f$-basic. Hence $(M,\f,g_\intercal)$ is transversely stably causal. However, as Lemma \ref{lookatthat} below will show, both $(\hat{z}_1,\hat{z}_2)$ and $(\hat{z}_2,\hat{z}_1)$ are contained in $K^+_\intercal$, where $\hat{z}_i$ is the leaf containing $z_i$. 
    }
\end{exe}

\medskip

Note that saying that a relation $R\subset X\times X$ is antisymmetric means that $$\{(x,y),(y,x)\}\subset R\implies(x,y)\in \Delta_X,$$ where $\Delta_X$ is the diagonal of $X\times X$. To deal with the issue at hand, it is clear that we must switch $\Delta_{M/\f}$ to a larger relation on $M/\f$. Now, recall that a topological space $(X,\tau)$ is Hausdorff if and only if $\Delta_X$ is closed in $X\times X$ (with the product topology). Its closure $\overline{\Delta_X}$ in $X\times X$ is a reflexive and symmetric, but not necessarily transitive relation. Also, if $\tau_z:=\{\mathcal{U}\in\tau: z\in\mathcal{U}\}$, it is easy to check that we can write
$$
\overline{\Delta_X}=\{(x, y)\in X\times X:\forall \mathcal{U}\in \tau_x, \forall \mathcal{V}\in \tau_y,\ \mathcal{U}\cap\mathcal{V}\neq\emptyset\}.
$$Note that the property on the right-hand side of the previous equality is possessed by $\hat{z}_1$ and $\hat{z}_2$ in Example \ref{exem-antisym-fails}. Hence $(\hat{z}_1,\hat{z}_2),(\hat{z}_2,\hat{z}_1) \in \overline{\Delta_{M/\f}}$ there. 

Hence, a possible suitable definition for \textit{transversely K-causality} is requiring that whenever $(\hat{x},\hat{y})$ and $(\hat{y},\hat{x})$ are in $K^+_\intercal$ we have that $(\hat{x},\hat{y})\in \overline{\Delta_{M/\f}}$. 

We also say that $(M,\f,g_\intercal)$ satisfies the \textit{transverse Seifert condition} if $(\hat{x},\hat{y})\in J^+_{S\intercal}$ and $(\hat{y},\hat{x})\in J^+_{S\intercal}\implies(\hat{x},\hat{y})\in \overline{\Delta_{M/\f}}$. 

\begin{proposition}\label{stablemeansseifert}
The following implications apply.
    \begin{itemize}
    \item[i)] If the transverse Seifert condition holds on $(M,\f,g_\intercal)$ and the transverse Seifert relation is closed, then $(M,\f,g_\intercal)$ is transversely $K$-causal. 
        \item[ii)] If $(M,\f,g_\intercal)$ is transversely stably causal, then the transverse Seifert relation is antisymmetric. In particular, the transverse Seifert condition holds. 
    \end{itemize} 
\end{proposition}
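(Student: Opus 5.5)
For part (i), I will show that the transverse Seifert relation $J^+_{S\intercal}$ is a closed, transitive relation on $M/\f$ containing $J^+_{M/\f}$. Minimality of $K^+_\intercal$ then gives $K^+_\intercal\subset J^+_{S\intercal}$, and the transverse Seifert condition transfers directly to $K^+_\intercal$.

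\begin{itemize}
\item \emph{Closedness} is given as a hypothesis.
\item \emph{Containment of $J^+_{M/\f}$.} Let $g_\intercal<g'_\intercal$. Any future-directed $g_\intercal$-transversely causal curve is $g'_\intercal$-transversely causal, by the time-orientation convention of Remark \ref{rmkstablycausalimpliescausal}. Hence $J^+_{M/\f}\subset J^+_{M/\f,g'_\intercal}$ for every such $g'_\intercal$, and therefore $J^+_{M/\f}$ lies in the intersection.
\item \emph{Transitivity.} Each $J^+_{M/\f,g'_\intercal}$ is transitive by concatenation of transversely causal curves. The existence of such a curve from leaf $L$ to leaf $L'$ does not depend on the chosen points on these leaves; this is the Causal Waterfall Lemma, or simply concatenation with vertical segments, which the relation tolerates. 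An intersection of transitive relations is transitive.
\end{itemize}

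Consequently, if $(\hat x,\hat y),(\hat y,\hat x)\in K^+_\intercal$, both pairs lie in $J^+_{S\intercal}$, and the transverse Seifert condition yields $(\hat x,\hat y)\in\overline{\Delta_{M/\f}}$.

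For part (ii), fix $g'_\intercal>g_\intercal$ with $(M,\f,g'_\intercal)$ transversely causal, as in Definition \ref{trstablecsldefi}. Suppose $(L,L')$ and $(L',L)$ both lie in $J^+_{S\intercal}$ with $L\neq L'$.

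\begin{itemize}
\item Choose an intermediate metric $g''_\intercal:=\frac12(g_\intercal+g'_\intercal)$. This is a transverse Lorentzian metric: it is basic, and it is Lorentzian by the convexity property of intervals stated earlier (now applied to $\nu\f$ via Proposition \ref{smallprime}). It satisfies $g_\intercal<g''_\intercal<g'_\intercal$.
\item By the definition of $J^+_{S\intercal}$, we have $L\le_{M/\f,g''_\intercal}L'$ and $L'\le_{M/\f,g''_\intercal}L$.
\item Concatenating the two curves gives a future-directed $g''_\intercal$-transversely causal curve from $L$ back to $L$. Since $g''_\intercal<g'_\intercal$, this curve is $g'_\intercal$-transversely timelike, and it is nontrivial because $L\neq L'$ forces it to be non-vertical somewhere.
\item This gives $L\ll_{M/\f,g'_\intercal}L$, contradicting transverse causality of $g'_\intercal$.
\end{itemize}

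Hence $L=L'$, so $J^+_{S\intercal}$ is antisymmetric. Since $\Delta_{M/\f}\subset\overline{\Delta_{M/\f}}$, the transverse Seifert condition follows.

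The main obstacle is the step in (ii) that produces a genuine causal loop. If $L\neq L'$, the concatenated loop cannot be entirely vertical, but I must check that "transversely causal curve" as defined requires non-vertical tangents, so that the definition of transverse causality, $L\nless_{M/\f}L$, is actually violated. A related point is confirming that the opened metric $g''_\intercal$ is really transverse, that is, basic with kernel equal to $T\f$. For a convex combination of basic tensors with common kernel this should be routine.
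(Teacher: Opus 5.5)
Your proposal is correct and follows the paper's proof. In (i) the paper likewise observes that $J^+_{S\intercal}$ is transitive, contains $J^+_{M/\f}$ and is closed by hypothesis, hence contains $K^+_\intercal$. In (ii) it simply notes $J^+_{S\intercal}\subset J^+_{M/\f,g'_\intercal}$, the latter being antisymmetric by transverse causality of $g'_\intercal$. Your intermediate metric $g''_\intercal$ is therefore unnecessary, since a $g'_\intercal$-causal loop already violates $L\nless_{M/\f,g'_\intercal}L$. Your nontriviality worry also disappears: transversely causal curves have non-vertical tangents by definition, which is also why concatenation must go through the Causal Waterfall Lemma rather than vertical segments.
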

\begin{proof}
    $(i)$\\
    Since The Seifert relation is clearly transitive and contains $J^+_{M/\f, g_\intercal}$, if it is in addition closed, then it contains $K^+_\intercal$ by the definition of the latter. \\
    $(ii)$\\
    Let $(\hat{x},\hat{y}),(\hat{y},\hat{x})\in J^+_{S\intercal}$. By definition of transverse stable causality we can pick a transverse Lorentz metric $g'_\intercal >g_\intercal$ such that $(M,\f,g'_\intercal)$ is transversely causal. Since $$J^+_{S\intercal}\subset J^+_{M/\f,g'_\intercal},$$
    and the latter relation antisymmetric due to transverse causality, we conclude that $\hat{x}=\hat{y}$, thus proving that the transverse Seifert relation is indeed antisymmetric.
\end{proof}
\begin{corollary}\label{massamassa}
 If $(M,\f,g_\intercal)$ is transversely stably causal and the transverse Seifert relation is closed, then $(M,\f,g_\intercal)$ is transversely $K$-causal.   
\end{corollary}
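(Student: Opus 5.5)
The corollary follows by chaining the two items of Proposition \ref{stablemeansseifert}; no new construction is needed. Assume $(M,\f,g_\intercal)$ is transversely stably causal and that $J^+_{S\intercal}$ is closed in $M/\f\times M/\f$.

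First apply item (ii). Transverse stable causality gives that $J^+_{S\intercal}$ is antisymmetric. Since $\Delta_{M/\f}\subset\overline{\Delta_{M/\f}}$, antisymmetry implies the weaker transverse Seifert condition: $(\hat x,\hat y),(\hat y,\hat x)\in J^+_{S\intercal}$ forces $\hat x=\hat y$, so $(\hat x,\hat y)\in\overline{\Delta_{M/\f}}$.

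Next apply item (i), using the closedness hypothesis. For completeness I would recheck the inclusion $K^+_\intercal\subset J^+_{S\intercal}$ that item (i) rests on. Transitivity of $J^+_{S\intercal}$ holds because an intersection of transitive relations is transitive, and each $J^+_{M/\f,g'_\intercal}$ is transitive by concatenating transversely causal curves through a common leaf. The inclusion $J^+_{M/\f,g_\intercal}\subset J^+_{S\intercal}$ holds because $g_\intercal<g'_\intercal$ makes every future-directed $g_\intercal$-transversely causal curve $g'_\intercal$-transversely timelike. This uses the time-orientation convention of Remark \ref{rmkstablycausalimpliescausal}. So $J^+_{S\intercal}$ is a closed transitive relation containing $J^+_{M/\f}$, and minimality of $K^+_\intercal$ gives $K^+_\intercal\subset J^+_{S\intercal}$.

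Finally, take $(\hat x,\hat y),(\hat y,\hat x)\in K^+_\intercal$. Both pairs then lie in $J^+_{S\intercal}$, and by the first step $\hat x=\hat y$. Hence $(\hat x,\hat y)\in\overline{\Delta_{M/\f}}$, which is exactly transverse $K$-causality. We in fact get the stronger conclusion that $K^+_\intercal$ is genuinely antisymmetric under these hypotheses. This is consistent with Example \ref{exem-antisym-fails}, since there the Seifert relation cannot be closed.

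There is no real obstacle here. The only point needing care is that the intersection defining $J^+_{S\intercal}$ is taken over a nonempty family, i.e. that some $g'_\intercal>g_\intercal$ exists. Transverse stable causality supplies one directly.
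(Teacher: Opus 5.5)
Your argument is correct and matches the paper's: the corollary is recorded there without separate proof as the combination of items (ii) and (i) of Proposition~\ref{stablemeansseifert}, chained in the same order you use, and your remark that $K^+_\intercal$ is then genuinely antisymmetric is also right. One point in your side check of transitivity should be made precise: two transversely causal curves that share a leaf generally meet it at different points, so they cannot simply be concatenated. You must first slide one of them along the leaf using the Causal Waterfall Lemma, which is what justifies the paper's statement that the Seifert relation is ``clearly transitive''.
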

\qcd

\begin{rmk}
    {\em It is well-known that in spacetimes the Seifert relation is always closed. However, the extant proofs (see, e.g. \cite[Thm. 4.94]{minguzzicausality}) rely very heavily on the existence of causal limit curve theorems and normal convex neighborhoods, and it is not obvious to us whether, or how, these could be either adapted or bypassed in the present context. However, if we impose a kind of ``local causal regularity'' by hand, then the desired closure of the transverse Seifert relation holds. This motivates the next definition.}
\end{rmk}

\begin{definition}[Causal leaf-regularity]\label{regulardefi}
  Given the Lorentzian foliation $(M,\f, g_\intercal)$, we say that a leaf $L\in \f$  is \emph{causally regular} if there exists a saturated neighborhood $\mathcal{B}\supset L$ of $L$ in $M$ with the following property: Given nets $(L'_i)_{i\in I}$ and $(L''_i)_{i\in I}$ of leaves contained in $\mathcal{B}$ such that $(i)$ $L'_i \stackrel{M/\f}{\longrightarrow} L' \subset \mathcal{B}$ and $L''_i \stackrel{M/\f}{\longrightarrow} L'' \subset \mathcal{B}$, and $(ii)$ for each $i\in I$, either $L'_i=L''_i$ or there exists a future-directed transversely causal curve segment starting at $L'_i$, ending at $L''_i$ and entirely contained in $\mathcal{B}$, then either $L'=L''$ or else there exists a future-directed transversely causal curve segment in $\mathcal{B}$ from $L'$ to $L''$. We say that $(M,\f, g_\intercal)$ is \emph{causally leaf-regular} if each $L\in \f$ is causally regular.
\end{definition}

Although causal leaf-regularity property may seem an \textit{ad hoc} property at first, we shall see below (cf. Proposition \ref{simplesubexe}) that in foliations given by connected components of certain submersions, convex normal neighborhoods of the base can be pulled back to $M$ to yield regular neighborhoods, so that property does hold in these cases. 

The usefulness of the notion of causal leaf-regularity now becomes apparent:
\begin{teo}\label{theone}
    If $(M,\f, g_\intercal)$ is causally leaf-regular and its leaf space $M/\f$ is Hausdorff, then its transverse Seifert relation is closed. In particular, in this case, if $(M,\f,g_\intercal)$ is transversely stably causal, then it is transversely $K$-causal.  
\end{teo}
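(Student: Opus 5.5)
The plan is to show that the complement of $J^+_{S\intercal}$ in $M/\f\times M/\f$ is open. The second sentence of the theorem then follows at once from Corollary \ref{massamassa}. Since $J^+_{S\intercal}=\bigcap_{g'_\intercal>g_\intercal}J^+_{M/\f,g'_\intercal}$, a pair $(L',L'')\notin J^+_{S\intercal}$ comes with some $g'_\intercal>g_\intercal$ such that $L''\notin J^+_{M/\f,g'_\intercal}(L')$. I would choose an intermediate transverse metric $g_\intercal<g''_\intercal<g'_\intercal$, for instance $g''_\intercal=\tfrac12(g_\intercal+g'_\intercal)$, which is possible by the convexity of intervals stated before Theorem \ref{teo:c0equalinterval}. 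The goal is to find neighbourhoods $\mathcal U\ni L'$ and $\mathcal V\ni L''$ in $M/\f$ such that no pair in $\mathcal U\times\mathcal V$ lies in $J^+_{M/\f,g''_\intercal}$. That gives an open neighbourhood of $(L',L'')$ disjoint from $J^+_{S\intercal}$.

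I would argue by contradiction using nets, since $M/\f$ need not be first countable. Suppose there are nets $L'_i\to L'$ and $L''_i\to L''$ with $L'_i\le_{M/\f,g''_\intercal}L''_i$. Take a future-directed $g''_\intercal$-causal curve $\gamma_i$ from $L'_i$ to $L''_i$. Every $g''_\intercal$-causal vector is $g'_\intercal$-timelike, so each $\gamma_i$ is $g'_\intercal$-timelike. The wider cones of $g'_\intercal$ should give slack near the endpoints: a small $g'_\intercal$-causal piece inside the causally regular neighbourhood $\mathcal B'$ of $L'$ should connect $L'$ to $L'_i$ for $i$ large. Concretely, I would first pick a $g'_\intercal$-timelike piece from $L'$ into $I^+_\intercal(L';g')$. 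Since $I^+$ is open, $L'_i$ eventually lies in $I^-$ of a point just to the future of $L'$. Symmetrically, $L''_i$ is eventually reached from some leaf just before $L''$. Concatenating these pieces with $\gamma_i$ and using push-up (\cite[Thm.~4.23]{us}) yields $L'\ll_{g'_\intercal}\cdots\le L''$. This would contradict the choice of $g'_\intercal$, provided the opening of $I^\pm$ yields curves actually connecting $L'$ to $L'_i$ rather than only to nearby leaves.

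The main obstacle is exactly this endpoint-matching step. Openness alone gives $L'_i\in I^+_{M/\f}(\tilde L)$ for some $\tilde L$ near $L'$, but we still need $L'\le\tilde L$. This is where causal leaf-regularity and Hausdorffness come in. Inside the regular neighbourhood $\mathcal B'$ of $L'$ I would use the regularity property, for $g'_\intercal$ or for $g''_\intercal$, applied to nets of short curves: take a leaf $\tilde L_i\in I^-_{M/\f}(L'_i)$ inside $\mathcal B'$ and pass to the limit. Hausdorffness of $M/\f$ guarantees that the limits of these nets are unique, so the limit leaf is genuinely $L'$ and not a non-separated companion, as in Example \ref{exem-antisym-fails}. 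Regularity then turns the limiting chain into an honest causal curve from $L'$ to a leaf that reaches $L'_i$.

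A cleaner alternative I would try first is to apply Definition \ref{regulardefi} directly, working locally. This means covering by regular neighbourhoods and writing the relation as a local closed relation composed with the open chronological relation of the wider metric $g'_\intercal$. The composition of a locally closed causal relation with an open one is what yields closedness of the Seifert intersection.
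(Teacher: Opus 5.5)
Your reduction is correct: since $J^+_{S\intercal}\subset J^+_{M/\f,g''_\intercal}$, it suffices to show $\overline{J^+_{M/\f,g''_\intercal}}\subset J^+_{M/\f,g'_\intercal}$ for $g_\intercal<g''_\intercal<g'_\intercal$. The gap is in the step meant to prove this. Every version of your endpoint matching aims at a chain from $L'$ that passes through $L'_i$, i.e.\ at $L'\le_{g'_\intercal}L'_i$ for large $i$. That is false in general. Take $\pi(t,x,y)=(t,x)$ on $\mathbb{R}^3$ with $g_\intercal=\pi^\ast(-dt^2+dx^2)<g'_\intercal=\pi^\ast(-dt^2+\tfrac14 dx^2)$. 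The leaves over $(0,1/i)$ converge to the leaf over $(0,0)$, yet none of them is $g'_\intercal$-causally related to it.

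Nor do your specific repairs produce such a chain. The openness variant needs $\tilde L\ll L'$ for $I^+_{M/\f}(\tilde L)$ to be a neighbourhood of $L'$. Combined with the required $L'\le\tilde L$, this forces $L'\ll L'$. The regularity variant fails structurally: Definition~\ref{regulardefi} only relates the \emph{limits} of two nets. Applied to $\tilde L_i\ll L'_i$ it gives at best $\tilde L\le L'$, and openness then yields $\tilde L\ll_{g'_\intercal}L'_i$, a chain issuing from the past of $L'$, not from $L'$. Hausdorffness does not help here. The ``cleaner alternative'' remains a slogan until you say to which pieces of the curves $\gamma_i$ regularity is applied.

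The missing idea, which is the paper's, is to match not at the endpoints but where $\gamma_i$ \emph{leaves} a neighbourhood with compact boundary. Take regular neighbourhoods $\mathcal B'\supset L'$ and $\mathcal B''\supset L''$, disjoint by Hausdorffness of $M/\f$. Choose relatively compact $U\ni x\in L'$ and $V\ni y\in L''$ with $\overline U\subset\mathcal B'$ and $\overline V\subset\mathcal B''$, and use the Causal Waterfall Lemma to arrange $\gamma_i(0)\to x$, $\gamma_i(1)\to y$. Let $c_i$ be the first exit point of $\gamma_i$ from $U$ and $c'_i$ the last entry point into $V$. Pass to a subnet with $c_i\to c\in\partial U$ and $c'_i\to c'\in\partial V$.

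The initial and final segments lie in the regular neighbourhoods, so regularity gives $L'\le L_c$ and $L_{c'}\le L''$. These become $\ll_{g'_\intercal}$. Openness of $I^+_{M/\f,g'_\intercal}(L')$ and $I^-_{M/\f,g'_\intercal}(L'')$ then gives $L'\ll L_{c_i}\le L_{c'_i}\ll L''$ for large $i$, with the middle relation coming from $\gamma_i$ itself. (To guarantee $L_c\neq L'$, it is safer to take the neighbourhood in $M/\f$, as in the proof of Theorem~\ref{leafregular}.)

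Finally, you may not invoke regularity ``for $g'_\intercal$ or $g''_\intercal$''. It is assumed for $g_\intercal$ only, and the paper applies it to $g_\intercal$-causal curves joining the approximating pairs. But pairs in $J^+_{S\intercal}$ need not be joined by $g_\intercal$-causal curves, only by $g''_\intercal$-causal ones for each $g''_\intercal>g_\intercal$. So completing your reduction requires the regularity property for $g''_\intercal$ as well. This is automatic for simple foliations, where every transverse metric is a pullback and Proposition~\ref{simplesubexe} applies.
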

\begin{proof}
    Let $(L,L') \in \overline{J^+_{S\intercal}}\setminus \Delta_{M/\f}$. Fix causally regular neighborhoods $\mathcal{B}\supset L$ and $\mathcal{B}'\supset L'$. By the Hausdorffness of the leaf space, we can assume without loss of generality that $\mathcal{B}\cap \mathcal{B}'=\emptyset$. Then, there exist nets $(L_i)_{i\in I}$ and $(L'_i)_{i\in I}$ of leaves such that $(i)$ $L_i \stackrel{M/\f}{\longrightarrow} L \in \mathcal{B}$ and $L'_i \stackrel{M/\f}{\longrightarrow} L' \in \mathcal{B}'$, and $(ii)$ for each $i\in I$ there exists a future-directed $g_\intercal$-transversely causal curve segment $\gamma_i:[0,1] \rightarrow M$ starting at $L_i$ and ending at $L'_i$. Fix $x\in L$ and $y \in L'$. Consider relatively compact open sets $U\ni x$ and $V\ni y$ in $M$ so that $\overline{U}\subset \mathcal{B}$ and $\overline{V}\subset \mathcal{B}'$. By virtue of the \textit{Causal Waterfall Lemma} (see \cite{us}, Lemma 4.19) we can assume that $\gamma_i(0) \rightarrow x$, $\gamma_i(1) \rightarrow y$ so we eventually have $0<t_i<s_i<1$, $\gamma_i(t_i)\in \partial U$, $\gamma(s_i) \in \partial V$, $\gamma_i[0,t_i) \subset U$, $\gamma_i(s_i,1] \subset V$. By compactness of $\partial U,\partial V$ we must have, up to passing to a subnet, that $\gamma_i(t_i) \rightarrow c \in \partial U$ and $\gamma_i(s_i)\rightarrow c'\in \partial V$. But then, causal regularity gives $(L,L_c),(L_{c'},L') \in J^+_{M/\f,g_\intercal}$, and $(L_c,L_{c'})\in  \overline{J^+_{M/\f, g_\intercal}}$. Given any transverse Lorentz metric $g'_\intercal >g_\intercal$ we thus have $(L,L_c),(L_{c'},L') \in I^+_{M/\f,g'_\intercal}$, and $(L_c,L_{c'})\in  \overline{J^+_{M/\f, g'_\intercal}}$, and thus $(L,L') \in I^+_{M/\f, g'_{\intercal}}$ by the openness of the transverse chronology (see \cite{us}, Corollary 4.27). Since $g'_\intercal$ has been taken arbitrarily, we conclude that $(L,L') \in J^+_{S\intercal}$, and the proof is complete.
\end{proof}
When the leaf space $M/\mathcal{F}$ is Hausdorff, then $\Delta_{M/\f}$ is closed in $M/\f\times M/\f$, whence $\overline{\Delta_{M/\f}}=\Delta_{M/\f}$ and thus transverse $K$-causality and the transverse Seifert condition reduce to usual anti-symmetry requirement on the  corresponding relation in this case.

An investigation by Minguzzi in \cite{mingutility} brought from mathematical economy the notion of an \textit{utility function} on a set with a preorder $(X, R)$ (that is, $R$ is reflexive and transitive), by which one means a function $f:X\rightarrow\mathbb{R}$ such that $(i)$ $(x,y)\in R\implies f(x)\leq f(y)$, and $(ii)$ if $(x,y) \in R$ but $(y,x)\notin R$, then $f(x)<f(y)$. In \cite{levin} the author discusses sufficient conditions for the existence of \textit{continuous} utility functions.

\begin{teo}[Levin]
If $X$ is a second countable, locally compact Hausdorff topological space, and $R$ is a closed preorder on $X$, then there exists a continuous utility function on $(X, R)$. Moreover, denoting with $\mathfrak{U}$ the set of continuous utilities we have that the
preorder $R$ can be recovered from the continuous utility functions, namely  $$
(x,y)\in R\iff \forall u\in\mathfrak{U}, u(x)\leq u(y).
$$
\end{teo}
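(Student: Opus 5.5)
The plan is to reduce Levin's theorem to a Urysohn-type separation lemma for closed preorders (Nachbin's theory of ordered topological spaces). We then assemble a countable family of separating functions into a single series, using second countability.

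\textbf{Step 1: monotone hulls.} For $A\subset X$ write $i(A)=\{y:\exists a\in A,\,(a,y)\in R\}$ and $d(A)=\{y:\exists a\in A,\,(y,a)\in R\}$. Since $R$ is closed in $X\times X$, for compact $K$ both $i(K)$ and $d(K)$ are closed: if $y_\lambda\to y$ with $(a_\lambda,y_\lambda)\in R$ and $a_\lambda\in K$, pass to a subnet $a_\lambda\to a\in K$ and use closedness of $R$. Transitivity makes $i(K)$ increasing and $d(K)$ decreasing.

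\textbf{Step 2: a Nachbin--Urysohn lemma.} We will prove: if $A$ is a compact-generated closed decreasing set, $B$ is a closed increasing set, and $A\cap B=\emptyset$, then there is a continuous isotone $f:X\to[0,1]$ with $f|_A=0$ and $f|_B=1$. Here isotone means $(x,y)\in R\Rightarrow f(x)\le f(y)$. The route is to show that $(X,R)$ is a \emph{normally preordered} space: disjoint closed decreasing and increasing sets have disjoint decreasing and increasing open neighbourhoods. One then runs the usual dyadic-rational construction of Urysohn's lemma with monotone open sets.

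Normality is where local compactness and $\sigma$-compactness (from second countability) enter. We exhaust $X$ by compacts $K_1\subset \mathrm{int}K_2\subset\cdots$ and separate inductively on each $K_n$, using Step 1 to keep all hulls closed. Alternatively, we pass to the one-point compactification, where compact Hausdorff spaces with closed preorders are known to be normally preordered. I expect this step to be the main obstacle. The delicate point is keeping the neighbourhoods monotone while controlling behaviour at infinity; if the compactification route fails, I would fall back on the exhaustion argument.

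\textbf{Step 3: countable separating family.} Fix a countable base $\mathcal{B}$ of relatively compact open sets. For each pair $U,V\in\mathcal{B}$ with $i(\overline V)\cap d(\overline U)=\emptyset$, Step 2 gives an isotone $f_{U,V}$ with $f_{U,V}=0$ on $d(\overline U)$ and $f_{U,V}=1$ on $i(\overline V)$. Enumerate these as $f_n$ and set $u=\sum_n 2^{-n}f_n$. Then $u$ is continuous by uniform convergence and isotone.

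Now suppose $(y,x)\notin R$. Closedness of $R$ and local compactness give $U\ni x$ and $V\ni y$ in $\mathcal{B}$ with $(\overline V\times\overline U)\cap R=\emptyset$. Transitivity then yields $i(\overline V)\cap d(\overline U)=\emptyset$, so some $f_n$ satisfies $f_n(x)=0<1=f_n(y)$. If moreover $(x,y)\in R$, isotonicity of the other terms gives $u(x)<u(y)$, so $u$ is a utility.

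\textbf{Step 4: recovering $R$.} For the characterisation of $R$, the forward implication is immediate from isotonicity. Conversely, suppose $(x,y)\notin R$. Step 3, with the roles of $x$ and $y$ exchanged, gives an isotone $f$ with $f(y)<f(x)$. Then $u+Cf$ is again a continuous utility for every $C>0$, since adding an isotone function preserves both utility conditions. For $C$ large we get $(u+Cf)(x)>(u+Cf)(y)$, so some $w\in\mathfrak U$ has $w(x)>w(y)$, which proves the reverse implication.
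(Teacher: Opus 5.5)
The paper does not prove this theorem; it is quoted from \cite{levin}, whose $\sigma$-compact metrizable hypothesis covers second countable locally compact Hausdorff spaces. Your argument therefore has to stand on its own.

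Steps 1, 3 and 4 are fine. Hulls of compacta are closed. The choice of $U,V\in\mathcal B$ with $(\overline V\times\overline U)\cap R=\emptyset$ is correct, and so is the transitivity argument giving $i(\overline V)\cap d(\overline U)=\emptyset$. The series $\sum_n 2^{-n}f_n$ is a continuous utility, and the perturbation $u+Cf$ does recover $R$.

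The gap is Step 2, which carries essentially all the content of Levin's theorem and which you only announce. Your first route cannot work: on the one-point compactification $X^\ast$ there is in general no closed preorder restricting to $R$. For $X=\mathbb R$ with its usual order, $(x,n)\to(x,\infty)$ and $(-n,y)\to(\infty,y)$ in $X^\ast\times X^\ast$. So any closed transitive relation containing $R$ contains $(x,\infty)$ and $(\infty,y)$, hence $(x,y)$, for all $x,y\in\mathbb R$.

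The real difficulty is that for a non-compact closed $F$ the hulls $i(F)$ and $d(F)$ need not be closed. The compact-case trick of shrinking an open $W$ to the open decreasing set $X\setminus i(X\setminus W)$ is therefore unavailable. An exhaustion that merely ``separates on each $K_n$'' must also explain why the pieces glue to a function isotone for all of $R$, not only for $R\cap(K_n\times K_n)$. Your sketch leaves this open.

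You can close the gap along your exhaustion route using Nachbin's extension theorem on compacta. It says that on a compact Hausdorff space with a closed preorder, every continuous isotone $[0,1]$-valued function on a closed subset extends to a continuous isotone function on the whole space. For preorders, pass to the quotient by $R\cap R^{-1}$, which is a compact space with a closed partial order.

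Let $A$ be closed decreasing and $B$ closed increasing with $A\cap B=\emptyset$. Take compacta $K_0=\emptyset$, $K_n\subset\mathrm{int}\,K_{n+1}$, $\bigcup_n K_n=X$. Suppose $f_n:K_n\to[0,1]$ is continuous, isotone for $R\cap(K_n\times K_n)$, and equal to $0$ on $A\cap K_n$ and $1$ on $B\cap K_n$. Define a function on the closed set $K_n\cup(A\cap K_{n+1})\cup(B\cap K_{n+1})$ to be $f_n$, $0$ and $1$ on the three pieces. It is well defined and continuous. It is isotone precisely because $A$ is decreasing, $B$ is increasing and $A\cap B=\emptyset$; for instance, $(p,q)\in R$ with $q\in A$ forces $p\in A$, and $(p,q)\in R$ with $p\in B$, $q\in A$ is impossible. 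Extending it to $K_{n+1}$ and truncating to $[0,1]$ gives $f_{n+1}$.

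The limit function is continuous since $K_n\subset\mathrm{int}\,K_{n+1}$, and isotone since every pair in $R$ lies in some $K_n\times K_n$. This proves Step 2 for arbitrary disjoint closed decreasing and increasing sets, with no compact generation needed. The rest of your proof then goes through as written.
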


Observe that the leaf space is always second countable and locally compact, but usually not Hausdorff. If it Hausdorff, however, then Levin's theorem has the immediate consequence: 
\begin{teo}\label{theoKimpliestimefunc}
    If $M/\mathcal{F}$ is Hausdorff, then transverse $K$-causality implies the existence of a transverse time function.
\end{teo}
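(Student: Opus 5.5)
We apply Levin's theorem on the leaf space $X := M/\f$ with the preorder $R := K^+_\intercal$, and then pull the resulting utility function back to $M$.

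\emph{Step 1: hypotheses of Levin's theorem.} As observed just before the statement, $M/\f$ is second countable and locally compact, and it is Hausdorff by assumption. The transverse $K^+$ relation is closed and transitive by definition. It is reflexive because it contains $J^+_{M/\f}$, which is reflexive (constant curves, or equivalently the convention $L\leq_{M/\f}L$). So $K^+_\intercal$ is a closed preorder, and Levin's theorem gives a continuous utility $u:M/\f\rightarrow\mathbb{R}$.

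\emph{Step 2: pull back.} Set $f:=u\circ\pi_\f$. This is continuous and constant on leaves.

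\emph{Step 3: strict increase.} Let $\alpha:[a,b]\rightarrow M$ be a future-directed transversely causal curve and $t<s$. Put $L=L_{\alpha(t)}$ and $L'=L_{\alpha(s)}$. Then $(L,L')\in J^+_{M/\f}\subset K^+_\intercal$, so $f(\alpha(t))\leq f(\alpha(s))$.

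For strictness it is enough to show $(L',L)\notin K^+_\intercal$. Suppose instead that $(L',L)\in K^+_\intercal$. Transverse $K$-causality then gives $(L,L')\in\overline{\Delta_{M/\f}}$, and since $M/\f$ is Hausdorff this means $L=L'$. But $\alpha|_{[t,s]}$ is then a future-directed transversely causal curve starting and ending on the same leaf.

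\emph{Main obstacle: ruling out $L=L'$.} We need transverse causality to exclude this loop. I would argue that transverse $K$-causality implies transverse causality. Suppose a transversely causal curve leaves $L$ and returns to it. Pick an intermediate parameter $r\in(t,s)$ such that $L'':=L_{\alpha(r)}\neq L$. Such an $r$ exists because a future-directed transversely causal curve has non-vertical tangent, so it cannot remain inside a single leaf; locally, in a simple chart, its projection is a nonconstant causal curve.

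The loop then gives both $(L,L'')\in J^+_{M/\f}$ and $(L'',L)\in J^+_{M/\f}$, hence both pairs lie in $K^+_\intercal$. Transverse $K$-causality together with Hausdorffness forces $L''=L$, a contradiction. Therefore $L\neq L'$, so $(L',L)\notin K^+_\intercal$, and the utility property gives $f(\alpha(t))<f(\alpha(s))$.

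By Definition \ref{transvtimedefi}, $f$ is a transverse time function.
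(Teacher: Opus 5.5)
Your argument follows the paper's route: apply Levin's theorem to the closed preorder $K^+_\intercal$ on the Hausdorff leaf space, then take $f=u\circ\pi_\f$. It is actually more complete than the paper's proof. The paper only compares distinct leaves $\hat x\neq\hat y$ and never rules out a future-directed transversely causal curve that returns to its initial leaf, which would force $f$ to take equal values at its endpoints. Your intermediate-leaf argument closes this by showing that transverse $K$-causality with Hausdorff $M/\f$ implies transverse causality.

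The one step that needs a line more of justification is the existence of $r$ with $L_{\alpha(r)}\neq L$. A nonconstant local projection only shows that $\alpha$ leaves its initial plaque, and $L$ may re-enter the chart through other plaques. You should add one of two standard facts. Either a leaf meets a simple open set in at most countably many plaques, whereas the projected arc takes uncountably many values. Or leaves are weakly embedded, so a curve lying entirely in $L$ would have vertical tangent.
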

\begin{proof}
By Levin's theorem, there exists a continuous utility function $t:M/\mathcal{F}\rightarrow \mathbb{R}$ for the transverse $K^+-$relation, that is,  $(\hat{x},\hat{y})\in K^+_\intercal \implies t(\hat{x})\leq t(\hat{y})$ and $(\hat{x},\hat{y})\in K^+_\intercal$ and $(\hat{y},\hat{x})\notin K^+_\intercal)\implies t(\hat{x})<t(\hat{y})$. Then, if $\hat{x}$ and $\hat{y}$ are distinct points in $M/\mathcal{F}$ with $\hat{x}<_{M/\mathcal{F}}\hat{y}$, then $(\hat{x},\hat{y})\in K^+_\intercal\setminus \Delta$, where $\Delta\subset M/\mathcal{F}\times M/\mathcal{F}$ is the diagonal. Since the $K^+_\intercal-$relation is antisymmetric it follows that $(\hat{y},\hat{x})\notin K^+_\intercal$, which implies that $t(\hat{x})<t(\hat{y})$. Thus, given the projection $\pi_{\f}:M\rightarrow M/\f$,  $f:=t\circ\pi_\mathcal{F}:M\rightarrow\mathbb{R}$ is a transverse time function.
\end{proof}
This result together with Theorem \ref{theone} yields
\begin{corollary}\label{tehonecor}
    If $(M,\f, g_\intercal)$ is transversely stably causal, causally leaf-regular and its leaf space $M/\f$ is Hausdorff, then it admits a transverse time function. 
\end{corollary}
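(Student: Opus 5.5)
The plan is to chain Theorem \ref{theone} with Theorem \ref{theoKimpliestimefunc}.

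First, since $(M,\f,g_\intercal)$ is causally leaf-regular and $M/\f$ is Hausdorff, Theorem \ref{theone} tells us that the transverse Seifert relation $J^+_{S\intercal}$ is closed. Because the foliation is also transversely stably causal, Proposition \ref{stablemeansseifert}(ii) makes $J^+_{S\intercal}$ antisymmetric. Corollary \ref{massamassa} then gives transverse $K$-causality, with $K^+_\intercal\subset J^+_{S\intercal}$. Since $M/\f$ is Hausdorff, $\overline{\Delta_{M/\f}}=\Delta_{M/\f}$, so transverse $K$-causality here is literally antisymmetry of $K^+_\intercal$.

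Next, Theorem \ref{theoKimpliestimefunc} applies directly under the Hausdorff hypothesis. It takes a continuous Levin utility $t$ on $(M/\f,K^+_\intercal)$ and sets $f=t\circ\pi_\f$. This $f$ is continuous, because $\pi_\f$ is continuous, and it is basic by construction.

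The one point I would check carefully is the strict increase of $f$ along a future-directed transversely causal curve $\alpha$ with $t<s$. The issue is that $\alpha(t)$ and $\alpha(s)$ must lie on \emph{distinct} leaves, which is needed to obtain $(\hat y,\hat x)\notin K^+_\intercal$. This holds because, by Remark \ref{rmkstablycausalimpliescausal}, transverse stable causality implies transverse causality, so a causal segment cannot return to its initial leaf. Antisymmetry of $K^+_\intercal$ then gives $t(\hat x)<t(\hat y)$.

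I expect no real obstacle. The substantive work has already been done in Theorem \ref{theone} and in Levin's theorem, whose hypotheses of second countability and local compactness of the leaf space are noted in the text.
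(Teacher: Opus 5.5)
Your proposal is correct and is the paper's own argument: the corollary is obtained by combining Theorem \ref{theone} (the transverse Seifert relation is closed, so transverse stable causality yields transverse $K$-causality) with Theorem \ref{theoKimpliestimefunc} (Levin's utility theorem on the Hausdorff leaf space). Your extra check, that transverse causality forces $\alpha(t)$ and $\alpha(s)$ onto distinct leaves, usefully makes explicit a step the paper leaves implicit in the proof of Theorem \ref{theoKimpliestimefunc}.
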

\qcd
An important class of examples where Hausdorff leaf spaces appear is that involving \textit{Lorentzian orbifolds} (see \cite{us} for a more detailed discussion of these objects). As is well known (see, \textit{e.g.}, {\cite[Proposition 2.23]{mrcun}}), every orbifold - which is Hausdorff by definition - can be seen as the leaf space of a regular homogeneous foliation with compact leaves. So if an orbifold $\mathcal{O}\cong M/\f$ is endowed with a Lorentzian orbifold metric $g_\mathcal{O}$, one can study its geometry via the transverse geometry of $(M, \f, g_\intercal(=\pi_\mathcal{F}^*g_\mathcal{O}))$, where $\pi_\mathcal{F}: M\rightarrow\mathcal{O}$. It is straightforward to check (as is discussed in the final remarks of \cite{us}) that the transverse stable causality of $(M, \f, g_\intercal(=\pi_\mathcal{F}^*g_\mathcal{O}))$ is the same as ``opening up'' the cones on $\mathcal{O}$ arising from $g_\mathcal{O}$. Therefore, the meaning of a Lorentzian orbifold being time-oriented and stably causal is clear. What is not clear to us, however, is whether that implies $K$-causality, say, via the closure of the analogue of the Seifert relation, and in particular whether causal leaf-regularity holds in this case (we suspect it does).  Thus we again defer an appropriate discussion of these issues to future work.

\section{Transverse stable causality in simple foliations}\label{sec:submersions}

Taking the equivalences laid out in Figure \ref{equivalences} as a plan for our generalization work, so far we have been able to establish some corresponding implications for a general foliation, as is shown in Figure \ref{tequivalences}.

\begin{figure}[h]
\centering{
\scalebox{0.8}{
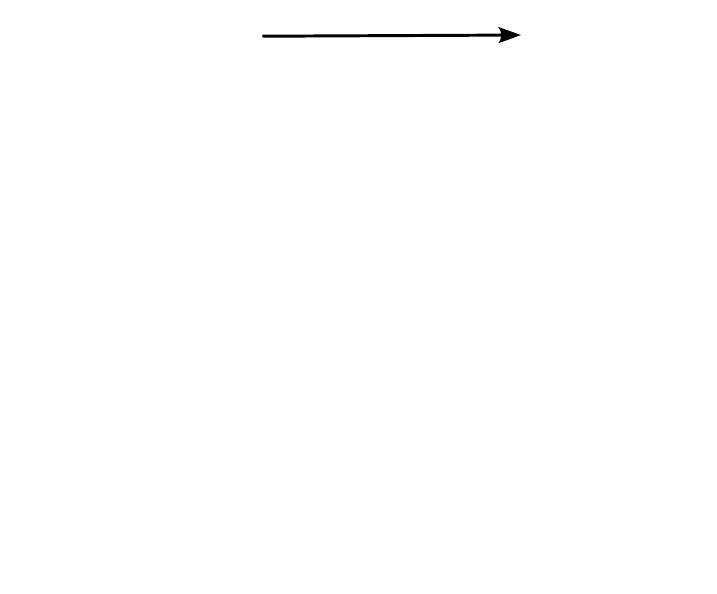}
\caption{Diagram of the logical implications established so far for a general foliation.}
\label{tequivalences}}
\end{figure}

In this section we deal with the special case where $\mathcal{F}$ is given by an onto submersion $\pi:M\rightarrow B$. If the fibers of $\pi$ are connected, we have simply that $M/\mathcal{F}\cong B$ and the transverse causal structure of $(M,\mathcal{F})$ is identical with the causal structure of $B$. However, if the fibers of $\pi$ are not connected, then the leaves of $\mathcal{F}$ are taken as the connected components of them, and the leaf space is a $T_1$ but not $T_2$ manifold (see \cite[Prop. 2.8]{us}), and there is a natural continuous surjection $\hat{\pi}:M/\f\rightarrow B$ such that $\hat{\pi}\circ\pi_\f=\pi$. In either case a transverse Lorentzian metric $g_\intercal$ on $(M,\mathcal{F})$ can be easily projected to yield a Lorentzian metric $h$ on $B$. We also implicitly assume that compatible time-orientations were chosen for $(M,\mathcal{F}, g_\intercal)$ and $(B,h)$.

\begin{proposition}\label{propussub}
    Let $\pi:M\rightarrow B$ be an onto submersion, and consider the foliation $\f$ whose leaves are the connected components of the fibers. Let $h$ be a Lorentzian metric on $B$ so that $(B,h)$ is time-oriented, and consider the Lorentzian foliation $(M,\f, g_\intercal:= \pi^\ast h)$ with the induced time-orientation. Then, $\forall x, y\in M$ we have that
 $$\f_x\ll_{M/\f} \f_y \text{ [resp. $\f_x<_{M/\f} \f_y$]} \implies \pi(x)\ll_h \pi(y) \text{ [resp. $\pi(x)<_h \pi(y)$]}.$$   
    
    If furthermore it holds that for all $b\in B$, there is an open neighborhood $\mathcal{U}\ni b$ such that every fiber of $\pi$ contained in $\pi^{-1}(\mathcal{U})\setminus\pi^{-1}(b)$ is connected, then the converse holds.
\end{proposition}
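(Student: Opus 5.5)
The forward implication is a direct projection argument. Suppose $\f_x\ll_{M/\f}\f_y$, so there is a future-directed transversely timelike curve $\gamma:[0,1]\to M$ with $\gamma(0)\in\f_x$ and $\gamma(1)\in\f_y$. Since $g_\intercal=\pi^\ast h$, for each $t$ we have $h(d\pi(\dot\gamma),d\pi(\dot\gamma))=g_\intercal(\dot\gamma,\dot\gamma)<0$. By the choice of induced time orientation, $d\pi$ maps future wedges into future cones, so $\pi\circ\gamma$ is a future-directed $h$-timelike curve. Because $\pi$ is constant on fibers, and hence on their connected components, it runs from $\pi(x)$ to $\pi(y)$. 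The causal case is identical, using $g_\intercal(\dot\gamma,\dot\gamma)\le 0$ with $\dot\gamma$ nonvertical, so that $d\pi(\dot\gamma)\neq 0$ is $h$-causal. Piecewise smoothness is preserved at the breaks.

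For the converse, take a future-directed $h$-timelike (resp. causal) curve $\sigma:[0,1]\to B$ from $\pi(x)$ to $\pi(y)$. The plan is to lift $\sigma$ to a curve in $M$ starting on $\f_x$ and ending on $\f_y$. Fix a complete Ehresmann-type horizontal distribution, e.g. the $h'$-orthogonal complement of $T\f$ for an auxiliary Riemannian metric $h'$ on $M$. In general the horizontal lift may exist only on a subinterval, since $\pi$ need not be a fibration. So I would instead lift locally: cover $\sigma[0,1]$ by finitely many open sets $\mathcal U_b$ given by the hypothesis, with $b=\sigma(s_\ell)$ for a partition $0=s_0<\dots<s_k=1$. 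On each piece, local sections of the submersion, obtained by the local normal form of $\pi$ near a point of $M$, give lifts $\tilde\sigma_\ell$ of $\sigma|_{[s_{\ell-1},s_\ell]}$, after refining the partition. Each lift is transversely timelike (resp. causal) and future-directed because $g_\intercal=\pi^\ast h$.

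The key point, and the main obstacle, is gluing. Consecutive lifts end and start on the same fiber $\pi^{-1}(\sigma(s_\ell))$, but possibly on different connected components, i.e. different leaves. Transverse curves cannot be concatenated through vertical segments when the leaves differ. This is where the hypothesis enters. I would choose partition points $s_\ell$ that are not the distinguished centers $b$, arranging that each $\sigma(s_\ell)\in\mathcal U_b\setminus\{b\}$. Then the fiber over $\sigma(s_\ell)$ is connected, hence a single leaf, and consecutive lifts can be joined by a vertical segment within that leaf. Equivalently, by the Causal Waterfall Lemma (\cite{us}, Lemma 4.19), a transversely causal curve can be started from any point of the leaf.

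The endpoints need separate care. The initial lift should start on $\f_x$, which we arrange by lifting through $x$ itself via a local section at $x$. Likewise, the final lift should be a local section through $y$, run backward from $y$. Its other end lies over an interior partition point whose fiber is connected, so it joins the previous lift. To guarantee that interior partition points avoid the bad points, I would use a Lebesgue number argument on the cover $\{\mathcal U_b\}$. Within each $\mathcal U_b$, every point other than $b$ has a connected fiber, and the curve can be subdivided so that the breakpoints avoid the finitely many centers $b$ used. Finally, concatenating and applying the waterfall lemma at each junction yields a future-directed transversely timelike (resp. causal) curve from $\f_x$ to $\f_y$, as required.
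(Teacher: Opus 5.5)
Your proposal is correct and follows essentially the paper's route: lift $\sigma$ piecewise through local sections of $\pi$, place the junction parameters over points with connected fibers, and glue consecutive lifts with the Causal Waterfall Lemma. The paper places the junctions by slightly ``tilting'' each $t_i$; you use a Lebesgue-number subdivision avoiding the finitely many centers, which works because a causal curve has nonvanishing velocity. Your choice of the final local section through $y$ is a small improvement: the paper's construction only guarantees that the last lift ends in $\pi^{-1}(\pi(y))$, which need not be the leaf $\f_y$ when that fiber is disconnected.
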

\begin{proof}
($\implies$) Trivial.\newline
($\impliedby$) Start by fixing a future-directed timelike curve $\alpha:[0,1]\rightarrow B$ with $\alpha(0)=\pi(x)$ and $\alpha(1)=\pi(y)$. Our goal is to construct a suitable lift of $\alpha$ starting at $x$ and ending at $y$. To that end, begin by covering $\alpha([0,1])\subset B$ by a finite collection $\mathcal{U}_1,\ldots, \mathcal{U}_k$ of open sets of $B$ which are domains of local sections $\sigma_i: \mathcal{U}_i\subset B \rightarrow M$ of $\pi$. We can choose $\sigma_1$ to satisfy $\sigma_1(\pi(x))=x$. Now, let $\{t_0=0< t_1< \cdots, t_k=1\}$ be a partition of $[0,1]$ with $\alpha([t_{i-1},t_i])\subset \mathcal{U}_i$ for each $i\in \{1,\ldots, k\}$. For each such $i$ one can define $$\widehat{\alpha}_i: t\in [t_{i-1},t_i] \mapsto (\sigma_i\circ \alpha) (t) \in M.$$
We easily check that $\widehat{\alpha}_i$ is a lift of $\alpha|_{[t_{i-1},t_i]}$ through $\pi$ and that it is a transversely timelike curve segment with respect to $(M,\f, g_\intercal)$. In particular, $\widehat{\alpha}_{1}(0)=x$, $\widehat{\alpha}_{k}(1)\in \pi^{-1}(\pi(y))$ and $\widehat{\alpha}_{i}(t_i)$ and $\widehat{\alpha}_{i+1}(t_i)$ will be on the same fiber of $\pi$. However, since not all fibers are connected, we cannot \textit{a priori} apply the Waterfall construction to connect the curves. To circumvent this issue, suppose that for some $i$ we have that $\pi^{-1}(\alpha(t_i))$ is not connected. By hypothesis, there exists an open neighborhood $\alpha(t_i)\subset\mathcal{U}\subset B$ such that all the fibers inside $\pi^{-1}(\mathcal{U})$, except for $\pi^{-1}(\alpha(t_i))$ are connected. Then, we can tilt $t_i$ slightly to $t_i+\varepsilon$ such that $\alpha(t_i+\varepsilon)$ is still in $\mathcal{U}_{i}\cap\mathcal{U}_{i+1}$ and $\pi^{-1}(\alpha(t_i+\varepsilon))$ is connected. Doing this as many times as needed, we ensure that the Waterfall construction can be applied $k-1$ times and we obtain a single piecewise smooth future-directed transversely timelike curve $\widehat{\alpha}:[0,1]\rightarrow M$ with $\widehat{\alpha}(0)=x$ and $\widehat{\beta}(1)=y$ as desired; we conclude that $L^x\ll_{M/\f} L^y$. The causal case is entirely analogous.
\end{proof}

The previous result naturally motivates the following definition. 
\begin{definition}[Simple foliations]\label{simpledefi}
   A Lorentzian foliation $(M,\f,g_\intercal)$ is said to be \emph{simple} if there exist a spacetime $(B,h)$ and 
   an onto submersion $\pi:M\rightarrow B$ be an onto submersion so that 
   \begin{itemize}
       \item the leaves of the foliation $\f$ are the connected components of the fibers of $\pi$;
       \item $g_\intercal = \pi^\ast h$ and the transverse time-orientation on $(M,\f,g_\intercal)$ is the one induced by the time-orientation on $(B,h)$;
       \item for all $b\in B$, there is an open neighborhood $\mathcal{U}\ni b$ such that every fiber of $\pi$ contained in $\pi^{-1}(\mathcal{U})\setminus\pi^{-1}(b)$ is connected.
       \end{itemize}
\end{definition}

\begin{exe}
{\em
Consider the submersion $\pi:\mathbb{R}^3\setminus\mathbb{Z}^3\rightarrow \mathbb{R}^2$ given by $\pi(t,x,y):= (t,x)$. Note that whenever $(t,x)\in\mathbb{Z}^2$ we have that $\pi^{-1}(t,x)$ has a countably infinite number of connected components. However, it is easily seen that the foliation $\f$ given by the connected components of the fibers of $\pi$ satisfies the condition of Definition \ref{simpledefi}, by taking, for example, $\mathcal{U}$ as the open ball of radius 1 centered on $(t,x)$, and choosing $h = -dt^2 + dx^2$ on the codomain and $g_\intercal=-dt^2 + dx^2$ on the domain. Thus, $(M,\f,g_\intercal)$ is indeed simple, and we can verify that transverse stable causality holds thereon. The leaf space of $\f$ satisfies almost all the conditions making it a smooth $2$-manifold, except for its topology: it is second countable and locally homemorphic to $\mathbb{R}^2$, but it is $T_1$ instead of Hausdorff, with each point with integer coordinates having a countably infinite number of branched copies, similar to the well-known example of the ``double-headed snake'' \cite[p. 43]{tti}.
}
\end{exe}

We shall for the remainder of this section, fix a simple foliation $(M,\f,g_\intercal)$, a spacetime $(B,h)$ and the submersion $\pi:M\rightarrow B$ satisfying the conditions in Definition \ref{simpledefi}. 

\begin{proposition}\label{simplesubexe}
   Every simple Lorentzian foliation is causally leaf-regular.
\end{proposition}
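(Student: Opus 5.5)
We need to show that every leaf $L$ of a simple foliation is causally regular. The plan is to pull back a convex normal neighborhood of $b:=\pi(L)$ in $(B,h)$ and to transfer the closedness of the local causal relation in $B$ up to $M$ via Proposition \ref{propussub}.

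\textbf{Step 1 (choice of $\mathcal{B}$).} Let $b=\pi(L)$ and let $\mathcal{U}\ni b$ be the neighborhood given by the third clause of Definition \ref{simpledefi}. Inside $\mathcal{U}$ choose a convex normal neighborhood $\mathcal{C}\ni b$ of $(B,h)$. In $\mathcal{C}$ the local causal relation
$$J^+_{\mathcal{C}}=\{(p,p') : \exists \text{ a future-directed causal curve in } \mathcal{C} \text{ from } p \text{ to } p'\}$$
is closed in $\mathcal{C}\times\mathcal{C}$; it is realized by the unique geodesic segment in $\mathcal{C}$, which depends continuously on its endpoints. Take $\mathcal{B}$ to be the union of $L$ and of all leaves lying in $\pi^{-1}(\mathcal{C})\setminus\pi^{-1}(b)$. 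The set $\mathcal{B}$ is saturated. It is open because $\pi^{-1}(\mathcal{C})\setminus\pi^{-1}(b)$ is open and $L$ is open in $\pi^{-1}(b)$. For the latter, use local product charts of the submersion: the leaves near $L$ in such a chart are slices, so a small chart around a point of $L$ meets only $L$ in the fiber over $b$. The key feature is that $\pi|_{\mathcal{B}}$ induces an injective, continuous and open map from the leaves of $\mathcal{B}$ onto $\mathcal{C}$, because every fiber over $\mathcal{C}\setminus\{b\}$ is connected.

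\textbf{Step 2 (projection).} Take nets $L'_i\to L'$ and $L''_i\to L''$ as in Definition \ref{regulardefi}. Apply $\pi$. By continuity of $\hat\pi$ we get $\pi(L'_i)\to\pi(L')$ and $\pi(L''_i)\to\pi(L'')$ in $\mathcal{C}$. A transversely causal curve in $\mathcal{B}$ projects to an $h$-causal curve in $\mathcal{C}$, by the computation in Example \ref{submersionsexe}. Hence $(\pi(L'_i),\pi(L''_i))\in J^+_{\mathcal{C}}$ for every $i$. By closedness, $(\pi(L'),\pi(L''))\in J^+_{\mathcal{C}}$.

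\textbf{Step 3 (lifting back).} If $\pi(L')=\pi(L'')$, then injectivity of $\pi$ on the leaves of $\mathcal{B}$ gives $L'=L''$. Otherwise take a causal curve $\alpha$ in $\mathcal{C}$ from $\pi(L')$ to $\pi(L'')$. Lift it through local sections of $\pi$ exactly as in the proof of Proposition \ref{propussub}. Adjust the junction parameters so that they avoid $b$ where needed, and glue the pieces using the Causal Waterfall Lemma \cite[Lemma 4.19]{us} within connected fibers. The lift stays in $\pi^{-1}(\mathcal{C})$. Every fiber it meets over $\mathcal{C}\setminus\{b\}$ is a single leaf of $\mathcal{B}$. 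Its endpoints can be chosen on $L'$ and $L''$, because all pieces are glued inside fibers that are single leaves of $\mathcal{B}$.

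\textbf{Main obstacle.} The hard part is the case where $\alpha$ passes through $b$ itself, whose fiber may be disconnected. A lifted curve through $\pi^{-1}(b)$ might leave $\mathcal{B}$ by hitting a component other than $L$. To handle this, first perturb the causal curve slightly inside $\mathcal{C}$ so that it meets $b$ only possibly at an endpoint; I would try a small deformation of the geodesic segment within $\mathcal{C}$. If an endpoint is $b$, begin the lift at a point of $L$. This uses the hypothesis $L'\subset\mathcal{B}$, so that $L'=L$ whenever $\pi(L')=b$. In the degenerate case where $\alpha$ is a null geodesic through $b$ at an interior parameter, I would instead use the local product chart at $b$, in which the lift near $b$ can be kept inside the slice belonging to $L$.
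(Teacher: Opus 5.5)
Your proof is correct. Its skeleton matches the paper's: pull back a convex normal neighbourhood of $b=\pi(L)$, project the nets to $B$, use closedness of the local causal relation there, and lift back as in Proposition~\ref{propussub}.

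The genuine difference is your choice of $\mathcal{B}$, and it matters. The paper takes $\mathcal{B}=\pi^{-1}(\mathcal{N})$, which contains every component of $\pi^{-1}(b)$, and concludes directly from Proposition~\ref{propussub}. That overlooks the case $\pi(L')=\pi(L'')=b$ with $L'\neq L''$, which does occur because $M/\f$ is non-Hausdorff at such leaves. Take $\pi:\mathbb{R}^3\setminus\mathbb{Z}^3\to\mathbb{R}^2$, $\pi(t,x,y)=(t,x)$, and $b=(0,0)$. The nets $L'_i=L''_i=\pi^{-1}(c_i)$, with $c_i\to b$ and $c_i\notin\mathbb{Z}^2$, converge simultaneously to the two distinct leaves $\{0\}\times\{0\}\times(0,1)$ and $\{0\}\times\{0\}\times(1,2)$. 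No transversely causal curve in $\pi^{-1}(\mathcal{N})$ joins these two leaves, since it would project to a closed causal curve in $\mathcal{N}$. So $\pi^{-1}(\mathcal{N})$ is not a regular neighbourhood in the sense of Definition~\ref{regulardefi}.

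By keeping only the component $L$ over $b$, you make $\hat\pi$ a homeomorphism from $\pi_\f(\mathcal{B})$ onto $\mathcal{C}$. Injectivity then settles that case, and your lifting argument keeps the curve inside this smaller $\mathcal{B}$. Your version is the one that actually proves the statement.

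Step 3 can be tightened. The perturbation detour is unnecessary, and it is impossible when the endpoints are only null-related: then the null geodesic is the only causal curve in $\mathcal{C}$ joining them. Your last fix alone handles every case:
\begin{enumerate}
\item Since $\mathcal{C}$ contains no closed causal curves, $\alpha$ passes through $b$ for at most one parameter $t_b$.
\item Choose the partition so that $t_b$ lies in the interior of a single subinterval, or is an endpoint of $[0,1]$.
\item Take the corresponding local section with $\sigma(b)\in L$.
\end{enumerate}
All junctions then lie over $\mathcal{C}\setminus\{b\}$, where fibers are single leaves of $\mathcal{B}$. You should still check that the Waterfall gluing only slides points along their own leaves, as the foliated-chart construction does. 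Given that, saturation of $\mathcal{B}$ keeps the glued curve in $\mathcal{B}$ and its point over $b$ on $L$.
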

\begin{proof}
  As indicated above, we fix a spacetime $(B,h)$ and the submersion $\pi:M\rightarrow B$ satisfying the conditions in Definition \ref{simpledefi} associated with the simple foliation $(M,\f,g_\intercal)$. Let $L \subset \pi^{-1}(b)$ be a leaf, with $b\in B$. Let $b\in \mathcal{N}\subset B$ be a convex normal neighborhood in $(B,h)$, and $\mathcal{U}:= \pi^{-1}(\mathcal{N})$. By the third bullet in Def. \ref{simpledefi} we can assume, without loss of generality, that every fiber of $\pi$ contained in $\mathcal U\setminus\pi^{-1}(b)$ is connected. 

  Consider nets $(L'_i)_{i\in I}$ and $(L''_i)_{i\in I}$ of leaves contained in $\mathcal{U}$ such that $(i)$ $L'_i \stackrel{M/\f}{\longrightarrow} L' \subset \mathcal{U}$ and $L''_i \stackrel{M/\f}{\longrightarrow} L'' \subset \mathcal{U}$, and $(ii)$ for each $i\in I$, either $L'_i=L''_i$ or there exists a future-directed transversely causal curve segment $\gamma_i:[0,1] \rightarrow \mathcal{U}$ starting at $L'_i$ and ending at $L''_i$. 

  Now, we have, for each such $\gamma_i$,
  \begin{eqnarray}
      h_{\pi\circ \gamma_i(t)}((\pi\circ \gamma_i)'(t),(\pi \circ \gamma_i)'(t)) &=& h_{\pi(\gamma_i(t))}(d\pi_{\gamma_i(t)}(\gamma_i'(t)),d\pi _{\gamma_i(t)}(\gamma_i'(t))) \nonumber \\
      &=& (\pi^\ast h)_{\gamma_i(t)}(\gamma_i'(t),\gamma_i'(t)) \equiv (g_\intercal)_{\gamma_i(t)}(\gamma_i'(t),\gamma_i'(t))\leq 0,\nonumber  \\
      &\text{ and }& \gamma_i'(t) \notin \ker d\pi_{\gamma_i(t)} \Rightarrow (\pi\circ \gamma)'(t)\neq 0, \quad \forall t \in [0,1], \nonumber
  \end{eqnarray}
  whence we conclude that $\pi \circ \gamma_i$ is a (future-directed) $h$-causal curve in $\mathcal{N}$, and thus that 
  $$\pi(L'_i):=b'_i\leq _{\mathcal{N}}b''_i:= \pi(L_i'').$$
  But we have $b_i'\rightarrow b':= \pi(L')$ and $b_i''\rightarrow \pi(L'')$ in $B$. Since the $h$-causal relation within the normal convex open set $\mathcal{N}$ is closed (see, e.g., \cite[Lemma 14.2]{oneillbook}) we conclude by Proposition \ref{propussub} that $L'\leq_{M/\f} L''$.
\end{proof}

We can relate transverse stable causal with the ordinary stable causality of $(B,h)$ in this context as follows. 
\begin{corollary}\label{submersionstable}
  For the simple foliation $(M,\f,g_\intercal)$,
  $$(M,\f, g_\intercal) \text{ is transversely stably causal if and only if $ (B,h)$ is stably causal}.$$
\end{corollary}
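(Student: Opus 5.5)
The plan is to prove the two implications separately. The backward one is essentially Example \ref{submersionsexe}. The forward one needs $g'_\intercal$ to descend to $B$, and that descent is the main obstacle.

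($\Leftarrow$) Suppose $(B,h)$ is stably causal. Pick $h'$ with $h<h'$ and $(B,h')$ causal, and put $g'_\intercal:=\pi^\ast h'$. The computation in Example \ref{submersionsexe} gives $g_\intercal<g'_\intercal$. For transverse causality, take a future-directed $g'_\intercal$-transversely causal curve from a leaf $L$ back to $L$. Its tangent vectors are non-vertical, so $\pi\circ\gamma$ is a genuine future-directed $h'$-causal curve. It is closed, because both endpoints lie in $L\subset\pi^{-1}(b)$. This contradicts the causality of $(B,h')$. Nothing about simplicity is used here.

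($\Rightarrow$) Let $g'_\intercal>g_\intercal$ be such that $(M,\f,g'_\intercal)$ is transversely causal. First I will construct a Lorentzian metric $h'$ on $B$ with $\pi^\ast h'=g'_\intercal$; this is the main obstacle. Over an open set $\mathcal U\subset B$ with a local section $\sigma$, set $h'_{\mathcal U}:=\sigma^\ast g'_\intercal$.

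Next I check that this is independent of the section. Let $b$ be a point whose fiber is connected, and let $x,y\in\pi^{-1}(b)$. Then $x,y$ lie on the same leaf. Since $g'_\intercal$ is basic, it is horizontal and holonomy-invariant. Hence the forms $v\mapsto g'_\intercal(\tilde v,\tilde v)$, with $\tilde v$ any lift of $v\in T_bB$ at $x$ or at $y$, coincide. One sees this by transporting along the leaf with local flows of vertical fields, or through the Haefliger cocycle used in Proposition \ref{proptransvtimevstime}. So the local definitions agree at every $b$ with connected fiber.

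By the third bullet of Definition \ref{simpledefi}, the points with disconnected fiber form a closed discrete set $D$. Its complement is open and dense, and on it the various $h'_{\mathcal U}$ patch to a well-defined smooth tensor. At $b\in D$, any two candidate values $\sigma_1^\ast g'_\intercal(b)$ and $\sigma_2^\ast g'_\intercal(b)$ are limits of the common values at nearby points of $B\setminus D$. Therefore they agree, and $h'$ is a globally defined smooth symmetric tensor with $\pi^\ast h'=g'_\intercal$. It is Lorentzian because $d\pi_x$ induces an isomorphism $\nu\f_x\cong T_{\pi(x)}B$. It inherits a time orientation from $g'_\intercal$, compatible with that of $h$.

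Since $d\pi$ is onto, $h(w,w)\le 0$ with $w\neq 0$ lifts to a non-vertical $g_\intercal$-causal $v$. Then $h'(w,w)=g'_\intercal(v,v)<0$, so $h<h'$.

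It remains to show $(B,h')$ is causal. The foliation $(M,\f,g'_\intercal)$ with $g'_\intercal=\pi^\ast h'$ satisfies the hypotheses of the converse part of Proposition \ref{propussub}. Suppose there were a closed future-directed $h'$-causal curve through $b=\pi(x)$, so that $\pi(x)<_{h'}\pi(x)$. Then Proposition \ref{propussub} gives $\f_x<_{M/\f}\f_x$ for $g'_\intercal$, contradicting its transverse causality. Hence $(B,h)$ is stably causal.
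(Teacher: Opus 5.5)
Your proposal is correct and follows the same route as the paper. Both arguments descend $g'_\intercal$ to a Lorentzian metric $h'$ on $B$ with $\pi^\ast h'=g'_\intercal$, transfer $g_\intercal<g'_\intercal$ to $h<h'$, and use Proposition \ref{propussub} to match transverse causality of $g'_\intercal$ with causality of $(B,h')$. The paper only asserts that $h'$ exists; your argument supplies that step, using holonomy-invariance on connected fibers and the density of $B\setminus D$, where $D$ is closed and discrete by the third bullet of Definition \ref{simpledefi}.
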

\begin{proof}
  Just observe that given any transverse Lorentzian metric $g'_\intercal$ on $(M,\f)$, there is a unique Lorentz metric $h'$ on $B$ such that $g'_\intercal=\pi^\ast h'$. It is then easy to see that 
  $$g_\intercal < g'_\intercal \Longleftrightarrow h<h',$$
  and by Prop. (\ref{propussub})
  $$(M,\f, g'_\intercal) \text{ is transversely causal if and only if $ (B,h')$ is causal},$$
  whence the conclusion follows. 
\end{proof}
The proof of the previous corollary and the Corollary \ref{massamassa} also easily establish:
\begin{corollary}\label{neat}
    The transverse Seifert relation of a simple foliation $(M,\f,g_\intercal)$ is closed. Hence, any transversely stably causal simple foliation is transversely $K$-causal.
\end{corollary}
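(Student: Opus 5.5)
The plan is to transfer everything to the base spacetime $(B,h)$ through the simple submersion $\pi$, using the correspondence in the proof of Corollary \ref{submersionstable}. Every transverse Lorentzian metric $g'_\intercal$ on $(M,\f)$ equals $\pi^\ast h'$ for a unique Lorentz metric $h'$ on $B$. Moreover, $g_\intercal<g'_\intercal$ if and only if $h<h'$. By Proposition \ref{propussub}, which applies to the base metric $h'$ because the third bullet of Definition \ref{simpledefi} concerns only $\pi$, we have $L'\le_{M/\f,g'_\intercal}L''$ if and only if $\pi(L')\le_{h'}\pi(L'')$ (with equality of leaves counted as causal precedence). Intersecting over all $g'_\intercal>g_\intercal$ then gives
\[
(L',L'')\in J^+_{S\intercal}\iff (\pi(L'),\pi(L''))\in J^+_S(h),
\]
where $J^+_S(h)$ is the ordinary Seifert relation of $(B,h)$. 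The one point needing care is when $\pi(L')=\pi(L'')$ but $L'\neq L''$. Even then both leaves are connected by a transversely causal curve for every wider metric, since $\{b\}\times\{b\}$ lies in $J^+_{h'}$ and Proposition \ref{propussub} produces the lift. So the equivalence still holds, as $J^+_{S\intercal}=(\hat\pi\times\hat\pi)^{-1}(J^+_S(h))$.

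Closedness then follows from continuity. The map $\hat\pi:M/\f\to B$ is continuous, so $\hat\pi\times\hat\pi$ is continuous on $M/\f\times M/\f$. The Seifert relation $J^+_S(h)$ of a spacetime is closed in $B\times B$ (the standard fact quoted in the remark before Definition \ref{regulardefi}, e.g.\ \cite[Thm. 4.94]{minguzzicausality}). Hence $J^+_{S\intercal}$, being a preimage of a closed set, is closed. This avoids Theorem \ref{theone}, which would require Hausdorffness of $M/\f$ and fails here in general.

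For the second assertion I would invoke Corollary \ref{massamassa} directly. Transverse stable causality makes the transverse Seifert relation antisymmetric by Proposition \ref{stablemeansseifert}(ii). Together with the closedness just proved, Proposition \ref{stablemeansseifert}(i) gives transverse $K$-causality, since $K^+_\intercal\subset J^+_{S\intercal}$.

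The main obstacle I expect is making the Seifert correspondence rigorous at the non-connected fibers. There one must check that Proposition \ref{propussub}'s converse direction yields precedence between arbitrary components over related base points. If causal precedence between distinct leaves over the same base point fails, I would instead use that $J^+_{S}(h)$ is reflexive and only argue via the inclusion $J^+_{S\intercal}\subset(\hat\pi\times\hat\pi)^{-1}J^+_S(h)$ together with a limit argument. That limit argument would lift base causal curves, using Proposition \ref{simplesubexe}'s causal leaf-regularity near the limit leaves.
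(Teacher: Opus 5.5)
\textbf{There is a genuine gap.} Your identification $J^+_{S\intercal}=(\hat\pi\times\hat\pi)^{-1}(J^+_S(h))$ fails exactly at the point you flag, and the closedness it is meant to deliver is false in general.

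Let $L'\neq L''$ be two components of one fiber $\pi^{-1}(b)$. The relation $b\le_{h'}b$ is witnessed only by the constant curve, whose lifts never leave a leaf. Proposition \ref{propussub} needs a genuine causal curve, and any transversely $\pi^\ast h'$-causal curve from $L'$ to $L''$ projects to a nonconstant closed $h'$-causal curve through $b$. When $(B,h)$ is stably causal you may take a causal $h'>h$, so $(L',L'')\notin J^+_{S\intercal}$ although $(b,b)\in J^+_S(h)$. Only the inclusion $J^+_{S\intercal}\subseteq(\hat\pi\times\hat\pi)^{-1}(J^+_S(h))$ survives.

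No limit argument can upgrade this inclusion to closedness. A closed $J^+_{S\intercal}$ contains $J^+_{M/\f}$, hence contains $\overline{\Delta_{M/\f}}$ by Lemma \ref{lookatthat}, and $\overline{\Delta_{M/\f}}$ is symmetric. But Proposition \ref{stablemeansseifert}(ii) makes $J^+_{S\intercal}$ antisymmetric under transverse stable causality. So closedness together with transverse stable causality forces $M/\f$ to be Hausdorff.

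Concretely, take Example \ref{exem-antisym-fails}, which is a simple foliation, and put $h'=-dt^2+\tfrac12 dx^2>h$. The coordinate $t$ strictly increases along future-directed $\pi^\ast h'$-transversely causal curves, and both $\hat z_1,\hat z_2$ lie in $\{t=0\}$. Hence $(\hat z_1,\hat z_2)\notin J^+_{S\intercal}$, while $(\hat z_1,\hat z_2)\in\overline{\Delta_{M/\f}}\subseteq\overline{J^+_{S\intercal}}$.

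So the first sentence of the statement fails for every transversely stably causal simple foliation with a disconnected fiber. Your argument is correct when all fibers are connected, since then $\hat\pi$ is a homeomorphism. It follows that Corollary \ref{massamassa} cannot be invoked. The paper's one-line appeal to the proof of Corollary \ref{submersionstable} breaks at the same spot.

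\textbf{The second sentence is nevertheless true,} and your surviving inclusion is what proves it, bypassing Corollary \ref{massamassa}. Set $R:=(\hat\pi\times\hat\pi)^{-1}(J^+_S(h))$.
\begin{itemize}
\item $R$ is closed, because $\hat\pi$ is continuous and the spacetime Seifert relation is closed.
\item $R$ is transitive, because $J^+_S(h)$ is.
\item $R$ contains $J^+_{M/\f}$, because projections of transversely causal curves are $h$-causal and $J^+_S(h)$ is reflexive.
\end{itemize}
By minimality, $K^+_\intercal\subseteq R$.

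Now suppose $(M,\f,g_\intercal)$ is transversely stably causal. Then $(B,h)$ is stably causal by Corollary \ref{submersionstable}, so $J^+_S(h)\subseteq J^+_{h'}$ for some causal $h'>h$, and $J^+_S(h)$ is antisymmetric. Therefore $(\hat x,\hat y),(\hat y,\hat x)\in K^+_\intercal$ forces $\hat\pi(\hat x)=\hat\pi(\hat y)$. Lemma \ref{a-necessary-lemma} then gives $(\hat x,\hat y)\in\overline{\Delta_{M/\f}}$, which is exactly transverse $K$-causality.
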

\qcd
The next result follows immediately from Corollary \ref{submersionstable} and \cite[Thm. 3.43]{beem}.
\begin{corollary}\label{submersionstablespecific}
    With the notation and assumptions as in Prop. \ref{propussub}, if $B$ is homeomorphic to $\mathbb{R}^2$, then $(M,\f, g_\intercal)$ is transversely stably causal.  
\end{corollary}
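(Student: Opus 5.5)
The plan is to reduce the statement to a known fact about the base spacetime and then lift it through Corollary \ref{submersionstable}. The hypotheses of Proposition \ref{propussub} give an onto submersion $\pi:M\rightarrow B$, a time-oriented Lorentzian metric $h$ on $B$, and $g_\intercal=\pi^\ast h$. I would first check that the implication ``$(B,h)$ stably causal $\Rightarrow$ $(M,\f,g_\intercal)$ transversely stably causal'' needs only the argument of Example \ref{submersionsexe}, and not the full simple-foliation hypothesis. Indeed, given $h'>h$ with $(B,h')$ causal, put $g'_\intercal:=\pi^\ast h'$. Then $g_\intercal<g'_\intercal$ follows directly from $h<h'$. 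Moreover, any closed-leaf transversely causal curve for $g'_\intercal$ projects to a closed $h'$-causal curve in $B$: its endpoints lie on one leaf, hence in one fiber of $\pi$. The projected curve is nonconstant because transversely causal tangent vectors are not in $\ker d\pi$. This contradicts the causality of $(B,h')$. So only the direction ``$\Leftarrow$'' of Corollary \ref{submersionstable} is needed, and it holds for every onto submersion, whether or not the fibers are connected.

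The argument therefore reduces to showing that every time-orientable Lorentzian metric $h$ on a manifold $B$ homeomorphic to $\mathbb{R}^2$ is stably causal. This is the content of \cite[Thm. 3.43]{beem}, which I would invoke directly. Should a self-contained justification be wanted, I would argue as follows. On a time-oriented Lorentzian surface the two null directions give two line fields, which on $\mathbb{R}^2$ integrate to two transverse foliations by lines with no closed leaves. A slightly wider metric $h'>h$ can be chosen to be time-orientable, for instance $h'=h+\epsilon\,\omega\otimes\omega$ for a suitable choice, or simply any metric whose cones contain those of $h$. The key point is that a closed causal curve of $h'$ in the plane would force, via a Poincaré--Bendixson type argument, a singularity of a timelike vector field inside the bounded region. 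That is impossible, since a global timelike field is nonvanishing.

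The main obstacle would only appear if one insisted on proving \cite[Thm. 3.43]{beem} from scratch, namely the planar topological step that rules out closed causal curves. Since the paper explicitly relies on that reference, I would cite it. The remaining checks are routine: that the induced time orientations match, and that the base $B$ is connected because it is a continuous image of the connected manifold $M$.
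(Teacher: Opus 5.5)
Your proof is correct and follows the paper's route: the base $(B,h)$ is stably causal by \cite[Thm. 3.43]{beem}, and this lifts to transverse stable causality of $(M,\f,g_\intercal)$; you also note that only the ``if'' direction of Corollary \ref{submersionstable} is needed, which is exactly the argument of Example \ref{submersionsexe} and holds for any onto submersion, so you can drop the simplicity hypothesis under which that corollary is stated. Your optional self-contained sketch is loose: adding $+\epsilon\,\omega\otimes\omega$ with $\epsilon>0$ narrows rather than widens the cones, and the index argument should use a spacelike (resp.\ null) field, which is transverse to every causal (resp.\ timelike) curve, rather than a timelike one. Since you rely on the citation, as the paper does, this does not affect the proof.
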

\qcd
\begin{exe}
In Corollary \ref{submersionstable} the assumption of connectedness of the fibers cannot be dropped in the `only if' part.
\emph{
   We know (conf. Example \ref{submersionsexe}) that for any onto submersion the stable causality of $(B,h)$ does still imply the transverse stable causality of $(M,\f,g_\intercal)$. But the converse fails in the general case. To see this, let $\pi: (t,x,y) \in \mathbb{R}^3 \mapsto (t,x) \in \mathbb{R}^2$. Consider on $\mathbb{R}^2$ the Minkowski (flat) metric 
   $$\eta = -dt^2 + dx^2$$
   with the standard time-orientation, so that if $\f$ is the foliation on $\mathbb{R}^3$ given by the fibers of $\pi$, which are all connected, and $g_\intercal:=\pi^\ast \eta $. Under these conditions, $(\mathbb{R}^3,\f, g_\intercal)$ is transversely stably causal by Corollary \ref{submersionstablespecific}. Now, let $G$ be the subgroup of isometries of $(\mathbb{R}^2,\eta)$ generated by the isometry 
   $$(t,x) \mapsto (t+1,x).$$
   $G$ obviously acts freely and properly on $\mathbb{R}^2$, so the quotient $B:=\mathbb{R}^2/G$ is a smooth manifold diffeomorphic to a $2d$ cylinder, and the quotient map $\varphi: \mathbb{R}^2 \rightarrow B$ is a covering map. Moreover, $\eta$ induces on $B$ a flat Lorentzian metric $h$ with closed timelike curves that ``are circles that go around the axis of the cylinder'', so that in particular $(B,h)$ is not even chronological\footnote{Indeed, this is a well-known example of a totally vicious spacetime, i.e., there is a closed timelike curve through each point.}, let alone (stably) causal. However, if we now consider the submersion $\hat{\pi}:=\varphi\circ \pi$, then the foliation $\hat{\f}$ is given by the connected components of its fibers, so we have still have $g_\intercal = \hat{\pi}^\ast h $, and $\hat{\f}=\f$. Nevertheless, this does not contradict Corollary \ref{submersionstable} because each fiber has infinitely many connected components, and there are transversely timelike curves connecting any two of these components projecting onto closed timelike curves on $(B,h)$. 
   }
\end{exe}

\medskip

\begin{proposition}\label{sub3-4}
If the simple foliation $(M,\mathcal{F}, g_\intercal)$ admits a transverse time function, then it admits a transverse temporal function. In particular, $(M,\mathcal{F}, g_\intercal)$ is transversely stably causal. 
\end{proposition}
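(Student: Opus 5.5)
The plan is to push the transverse time function down to the base spacetime $(B,h)$, use the classical smoothing result there, and pull the resulting temporal function back via $\pi$. Let $f:M\rightarrow\mathbb{R}$ be a transverse time function. By Proposition \ref{proptransvtimevstime}, $f$ is basic, i.e.\ constant on the leaves, which are the connected components of the fibers of $\pi$.

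\textbf{Step 1: produce a time function on $(B,h)$.} If the fibers of $\pi$ are connected, $f$ descends to a function $\tilde f:B\rightarrow\mathbb{R}$ with $f=\tilde f\circ\pi$. Continuity of $\tilde f$ follows from local sections: near $b$, $\tilde f=f\circ\sigma$. Strict monotonicity along future-directed $h$-causal curves follows from Proposition \ref{propussub}. Indeed, $\pi(x)<_h\pi(y)$ implies $\f_x<_{M/\f}\f_y$, and hence $f(x)<f(y)$; equivalently, one lifts a causal curve piecewise by local sections and joins the pieces via the Waterfall Lemma. The general simple case, with disconnected fibers at isolated base points, is the main obstacle, because $f$ may take different values on different components of $\pi^{-1}(b)$. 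I would avoid defining $\tilde f$ directly and instead argue that $(B,h)$ is causal. A closed future-directed $h$-causal curve $\alpha$ in $B$ can be perturbed so that its base point has a connected fiber, using the third bullet of Definition \ref{simpledefi}. Proposition \ref{propussub} then gives $L<_{M/\f}L$ for a leaf $L$, contradicting the strict increase of $f$.

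This yields causality, but stable causality is what we need. To get it, I would show that the values of $f$ agree on the components of a disconnected fiber $\pi^{-1}(b)$. Pick a small neighborhood $\mathcal{U}\ni b$ as in Definition \ref{simpledefi}, and take points $b^-\ll_h b\ll_h b^+$ in $\mathcal{U}$ with connected fibers. Proposition \ref{propussub}, applied to curves through $b$, gives $f(\pi^{-1}(b^-))<f(L)<f(\pi^{-1}(b^+))$ for every component $L$ of $\pi^{-1}(b)$. Letting $b^\pm\rightarrow b$ and using continuity of $f$ forces all such $f(L)$ to equal the common limit. Hence $f$ is constant on full fibers and descends to a continuous time function $\tilde f$ on $(B,h)$.

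\textbf{Step 2: smooth on the base.} By the classical theory (Hawking; Bernal--Sánchez \cite{Bernal_2005}), a spacetime admitting a time function is stably causal and admits a smooth temporal function $\tau:B\rightarrow\mathbb{R}$. Set $F:=\tau\circ\pi$. Then $F$ is smooth and basic. For any future-directed transversely causal $v\in T_xM$, the vector $d\pi_x(v)$ is nonzero, future-directed and $h$-causal, so
\[
dF(v)=d\tau(d\pi_x(v))>0 .
\]
Thus $F$ is a transverse temporal function in the sense of Definition \ref{defitemporalfunction}.

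\textbf{Step 3: conclude.} Transverse stable causality follows from Theorem \ref{tempfunctstable}, or directly from Corollary \ref{submersionstable}, since $(B,h)$ is stably causal.
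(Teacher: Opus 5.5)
Your proof is correct and follows the same route as the paper: you descend the basic transverse time function to a time function on $(B,h)$, smooth it there via Bernal--S\'anchez, and pull the resulting temporal function back by $\pi$. Your squeeze argument with $b^-\ll_h b\ll_h b^+$ and a local section usefully fills in the step the paper only calls ``easy to check'', namely that $f$ takes the same value on every component of a disconnected fiber; the detour through causality of $(B,h)$ is unnecessary.
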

\begin{proof}
    Let $f:M\rightarrow\mathbb{R}$ be a transverse time function. Since $f$ is constant on the leaves of $\mathcal{F}$, and given that $\pi$ satisfies the connectivity condition mentioned above, it is easy to check that $f$ is also constant along the fibers of $\pi$ and therefore there exists a continuous function $\hat{f}:B\rightarrow \mathbb{R}$ such that $\hat{f}\circ\pi=f$. Let $\gamma:[a,b]\rightarrow B$ be a smooth future-directed causal curve, and denote by $\hat{\gamma}$ some lift of $\gamma$ to $M$. Then, $g_{\intercal\hat{\gamma}(t)}(\hat{\gamma}^\prime(t),\hat{\gamma}^\prime(t))=h_{\gamma(t)}(\gamma^\prime(t),\gamma^\prime(t))\leq 0$, since $\pi$ is, up to completing the vertical part of $g_\intercal$, a Lorentzian submersion. This means that $\hat{\gamma}$ is transversely causal and therefore $f$ increases along it. We thus conclude that $\hat{f}$ is a time-function. By \cite{Bernal_2005}, there exists a smooth temporal function $\tilde{f}: B\rightarrow \mathbb{R}$. Then, $\tilde{f}\circ\pi$ is a transverse temporal function on $M$.
\end{proof}

\begin{proposition}\label{sub1-3}
If a simple foliation $(M,\mathcal{F}, g_\intercal)$ is transversely stably causal, then it admits a transverse time function.
\end{proposition}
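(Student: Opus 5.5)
The plan is to push everything down to the base spacetime $(B,h)$ and use the classical theory there. By Corollary \ref{submersionstable}, the transverse stable causality of the simple foliation $(M,\f,g_\intercal)$ is equivalent to the stable causality of $(B,h)$. By the classical results (Hawking's averaged volume functions, or Bernal--Sánchez \cite{Bernal_2005}), $(B,h)$ then admits a time function. We may take it to be a smooth temporal function $\tilde f:B\rightarrow\mathbb{R}$, i.e.\ $d\tilde f(w)>0$ for every future-directed $h$-causal $w\in TB$.

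The candidate is $f:=\tilde f\circ\pi:M\rightarrow\mathbb{R}$. It is smooth, and it is constant on fibers of $\pi$, hence on leaves, so it is basic. Let $v\in T_xM$ be a future-directed transversely causal vector. Since $g_\intercal=\pi^\ast h$, we have
\[
h(d\pi_x v,d\pi_x v)=g_\intercal(v,v)\leq 0.
\]
Moreover $v$ is nonvertical, so $d\pi_x v\neq 0$ because $\ker d\pi_x=T_x\f$. Also, $d\pi_x v$ is future-directed, because the transverse time orientation of a simple foliation is the one induced from $(B,h)$ (Definition \ref{simpledefi}). Therefore
\[
df_x(v)=d\tilde f_{\pi(x)}(d\pi_x v)>0,
\]
so $f$ is a transverse temporal function in the sense of Definition \ref{defitemporalfunction}.

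As remarked right after that definition, any transverse temporal function is a transverse time function. This can be seen directly: along a future-directed transversely causal curve $\alpha$, the composition $f\circ\alpha$ is piecewise smooth with positive derivative on each piece, hence strictly increasing. This gives the desired transverse time function; in fact it gives the stronger temporal version.

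The only substantive point is the identification of the fiber/leaf structure. Corollary \ref{submersionstable} rests on Proposition \ref{propussub}, which needs the third bullet of Definition \ref{simpledefi} so that transverse causality of $g'_\intercal=\pi^\ast h'$ matches causality of $h'$. Everything else is a direct computation. I would therefore be careful to cite Corollary \ref{submersionstable} rather than Example \ref{submersionsexe}, which only proves the converse direction. If one prefers to avoid invoking the smoothing theorem, Hawking's continuous time function on $B$ composed with $\pi$ works equally well: any future-directed transversely causal curve in $M$ projects to a future-directed $h$-causal curve in $B$, along which that function strictly increases.
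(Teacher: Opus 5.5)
Your proof is correct and follows the paper's route: Corollary \ref{submersionstable} gives stable causality of $(B,h)$, and composing a classical time function on $B$ with $\pi$ yields the required transverse time function. The only difference is that you take a Bernal--S\'anchez temporal function on $B$ rather than a continuous one. This upgrades the conclusion to a transverse temporal function, which the paper obtains separately in Proposition \ref{sub3-4}.
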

\begin{proof}
By Corollary \ref{submersionstable}, $(B,h)$ is stably causal. Then $(B,h)$ admits a continuous time function $f$, from which we obtain a transverse time function $f\circ \pi$ on $(M,\mathcal{F},g_\intercal)$.
\end{proof}



\begin{lemma}\label{a-necessary-lemma}
    Let $\hat{x}, \hat{y}\in M/\mathcal{F}$. Then, $\hat\pi(\hat x)=\hat\pi(\hat y)$, if and only if $(\hat{x},\hat{y})\in\overline{\Delta_{M/\f}}$.
\end{lemma}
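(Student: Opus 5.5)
We prove the two implications separately, using the description of the closure of the diagonal recalled above: $(\hat x,\hat y)\in\overline{\Delta_{M/\f}}$ iff every open neighbourhood of $\hat x$ meets every open neighbourhood of $\hat y$ in $M/\f$. Recall that $\pi_\f:M\to M/\f$ is open, since saturations of open sets are open, and that $\hat\pi:M/\f\to B$ is continuous with $\hat\pi\circ\pi_\f=\pi$.

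The direction ($\Leftarrow$) only needs $B$ to be Hausdorff. Suppose $\hat\pi(\hat x)\neq\hat\pi(\hat y)$. Choose disjoint open sets $W\ni\hat\pi(\hat x)$ and $W'\ni\hat\pi(\hat y)$ in $B$. Then $\hat\pi^{-1}(W)$ and $\hat\pi^{-1}(W')$ are disjoint open neighbourhoods of $\hat x$ and $\hat y$. Hence $(\hat x,\hat y)\notin\overline{\Delta_{M/\f}}$.

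The direction ($\Rightarrow$) is where simplicity is used. Let $b:=\hat\pi(\hat x)=\hat\pi(\hat y)$. Pick $x\in\hat x$, $y\in\hat y$, and arbitrary open neighbourhoods $\mathcal{U}\ni\hat x$, $\mathcal{V}\ni\hat y$ in $M/\f$. We must show $\mathcal{U}\cap\mathcal{V}\neq\emptyset$.
\begin{enumerate}
\item Since $\pi$ is a submersion, it is an open map. So $\pi(\pi_\f^{-1}(\mathcal{U}))$ and $\pi(\pi_\f^{-1}(\mathcal{V}))$ are open sets in $B$ containing $b$.
\item By the third bullet of Definition \ref{simpledefi}, choose an open $\mathcal{N}\ni b$ such that every fiber of $\pi$ over $\mathcal{N}\setminus\{b\}$ is connected. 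Intersect the two open sets of step 1 with $\mathcal{N}$ to get an open neighbourhood $W$ of $b$.
\item Since $\dim B\geq 2$, $W\setminus\{b\}\neq\emptyset$; pick $b'$ in it. Then there are $x'\in\pi_\f^{-1}(\mathcal{U})$ and $y'\in\pi_\f^{-1}(\mathcal{V})$ with $\pi(x')=\pi(y')=b'$.
\item The fiber $\pi^{-1}(b')$ is connected, so it is a single leaf $L$. By saturation of $\pi_\f^{-1}(\mathcal{U})$ and $\pi_\f^{-1}(\mathcal{V})$, the point $L$ of the leaf space lies in $\mathcal{U}\cap\mathcal{V}$.
\end{enumerate}
Hence $(\hat x,\hat y)\in\overline{\Delta_{M/\f}}$.

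The main obstacle is step 3: we need points of $W$ other than $b$ that are still covered by both projected neighbourhoods. This is handled by openness of $\pi$ together with the fact that $B$ has no isolated points, since $\dim B = q\geq 2$. Without the third bullet of Definition \ref{simpledefi} the argument fails, because nearby fibers could be disconnected and the two lifts $x'$, $y'$ could lie on different leaves.
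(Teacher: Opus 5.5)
Your proof is correct and follows the same route as the paper. For ($\Leftarrow$) you separate $\hat\pi(\hat x)\neq\hat\pi(\hat y)$ in the Hausdorff space $B$ and pull the separating sets back. For ($\Rightarrow$) you use the local fiber-connectedness condition of Definition \ref{simpledefi}, filling in the details the paper leaves as ``follows easily'': openness of the submersion $\pi$, the absence of isolated points in $B$, and saturation of $\pi_\f^{-1}(\mathcal{U})$ and $\pi_\f^{-1}(\mathcal{V})$.
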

\begin{proof}
$(\implies)$ It follows easily from the special property which is being required for the submersion $\pi$ that every neighborhood of $\hat{x}$ and $\hat{y}$ must intersect.\\
$(\impliedby)$ If $\hat\pi(\hat x)\neq\hat\pi(\hat y)$, $B$ being Hausdorff we can construct $\pi-$saturated disjoint neighborhoods $\mathcal{U}, \mathcal{V}\subset M$ respectively containing the leaves of $\hat x$ and $\hat{y}$. The result follows by contradiction once one notes that any set which is $\pi-$saturated  is also $\f-$saturated.
\end{proof}

\begin{lemma}\label{lookatthat}
Let $R_\intercal$ be any closed relation on $M/\f$ containing the relation $J^+_{M/\f}$. Then, $\overline{\Delta_{M/\f}}\subset R_\intercal$. In particular, $\overline{\Delta_{M/\f}}\subset K^+_\intercal$.
\end{lemma}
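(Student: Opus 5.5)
Given $(\hat x,\hat y)\in\overline{\Delta_{M/\f}}$, the plan is to exhibit it as a limit of pairs lying in $J^+_{M/\f}$; the closedness of $R_\intercal$ then gives $(\hat x,\hat y)\in R_\intercal$. We work in the simple-foliation setting of this section, with $\pi:M\to B$ and $\hat\pi$ as above.

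First, Lemma \ref{a-necessary-lemma} gives $\hat\pi(\hat x)=\hat\pi(\hat y)=:b$. Fix $x\in\hat x$ and $y\in\hat y$. If $\hat x=\hat y$, the pair is in $J^+_{M/\f}$ by reflexivity (the constant pair; or note the diagonal is a limit of pairs $(\hat x,L_n)$ with $L_n$ just to the future of $\hat x$). So assume $\hat x\neq\hat y$, in which case the fiber $\pi^{-1}(b)$ is disconnected. By the third bullet of Definition \ref{simpledefi}, there is a neighbourhood $\mathcal U\ni b$ such that every fiber over $\mathcal U\setminus\{b\}$ is connected. Pick a future-directed timelike curve $\alpha:[0,1]\to\mathcal U$ with $\alpha(0)=b$, and set $b_n:=\alpha(1/n)\to b$, so that $b\ll_h b_n$.

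Next, build converging nets. Using local sections $\sigma,\tau$ of $\pi$ near $b$ with $\sigma(b)=x$ and $\tau(b)=y$, put $x_n:=\sigma(b_n)\to x$ and $y_n:=\tau(b_n)\to y$. These points lie on the same fiber $\pi^{-1}(b_n)$, which is connected, so $L_{x_n}=L_{y_n}=:L_n$. Since $\pi_\f$ is continuous, $L_n\to\hat x$ and also $L_n\to\hat y$ in $M/\f$. Hence $(\hat x,\hat y)$ is a limit of the pairs $(\hat x,L_n)$ and $(L_n,\hat y)$.

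To land in $J^+_{M/\f}$ we need a single sequence of pairs in that relation converging to $(\hat x,\hat y)$. The candidate is $(\hat x,L_n)$, which converges to $(\hat x,\hat y)$ in the product topology because $L_n\to\hat y$. It lies in $J^+_{M/\f}$: the curve $\sigma\circ\alpha|_{[0,1/n]}$ is a future-directed transversely timelike curve from $x$ to $x_n\in L_n$ (this is the lifting argument of Proposition \ref{propussub}). Therefore $(\hat x,\hat y)\in\overline{J^+_{M/\f}}\subset R_\intercal$.

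The particular case follows from the definition: $K^+_\intercal$ is closed and contains $J^+_{M/\f}$.

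The main obstacle is the non-Hausdorff topology of $M/\f$. We must verify that convergence of leaves is correctly inherited from convergence of points, which holds because $\pi_\f$ is continuous and open. We must also note that a sequence suffices here, since $M/\f$ is second countable. Finally, the argument depends on the connectedness of nearby fibers, which is exactly what makes $L_n$ a single leaf converging to both $\hat x$ and $\hat y$.
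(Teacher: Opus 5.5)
Your argument is correct in the simple-foliation setting, but it takes a much longer route than the paper. The paper's proof is purely topological and uses nothing about $\pi$. Since $\leq_{M/\f}$ is reflexive, $\Delta_{M/\f}\subset J^+_{M/\f}\subset R_\intercal$. Closure is monotone, so $\overline{\Delta_{M/\f}}\subset\overline{J^+_{M/\f}}\subset\overline{R_\intercal}=R_\intercal$.

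Your construction uses Lemma \ref{a-necessary-lemma}, the third bullet of Definition \ref{simpledefi}, local sections $\sigma,\tau$, and a lifted short timelike curve. In effect it re-derives by hand the inclusion $\overline{\Delta_{M/\f}}\subset\overline{J^+_{M/\f}}$, which already follows from $\Delta_{M/\f}\subset J^+_{M/\f}$ alone.

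What your route buys is a concrete picture of the branching. Each off-diagonal pair in $\overline{\Delta_{M/\f}}$ is the limit of the chronologically related pairs $(\hat x,L_n)$, where a single leaf $L_n$ converges to both branches $\hat x$ and $\hat y$. Even the resulting inclusion $\overline{\Delta_{M/\f}}\subset\overline{I^+_{M/\f}}$ can be obtained abstractly, though, since every leaf is a limit of leaves in its own chronological future.

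What it costs is generality. The lemma holds verbatim for an arbitrary Lorentzian foliation, whereas your proof depends on the submersion and on the connectedness of nearby fibers.

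Two small remarks:
\begin{enumerate}
\item You do not need second countability. The limit of a sequence lying in a set belongs to the closure of that set in any topological space; second countability only matters for the converse.
\item Only continuity of $\pi_\f$ is used, not openness.
\end{enumerate}
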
\begin{proof}
\begin{align*}
    \Delta_{M/\f}&\subset J^+_{M/\f}\subset R_\intercal\\
    &\implies\\
    \overline{\Delta_{M/\f}}&\subset \overline{J^+_{M/\f}}\subset \overline{R_\intercal}=R_\intercal
\end{align*}
\end{proof}

\begin{proposition}\label{sub6-1}
If a simple foliation $(M, \mathcal{F}, g_\intercal)$ is transversely $K$-causal, then it is transversely stably causal.
\end{proposition}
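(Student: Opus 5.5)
The strategy is to push everything down to the base spacetime $(B,h)$ and invoke the classical result there: in spacetimes, $K$-causality is equivalent to stable causality (Minguzzi), and Corollary \ref{submersionstable} then lifts stable causality of $(B,h)$ back to transverse stable causality of $(M,\f,g_\intercal)$. So it suffices to show that $(B,h)$ is $K$-causal, i.e., that the spacetime relation $K^+_h$ on $B$ is antisymmetric.

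The key step is to compare $K^+_h$ with $K^+_\intercal$ via the continuous surjection $\hat\pi:M/\f\rightarrow B$. Consider
$$R_\intercal:=\{(\hat x,\hat y)\in M/\f\times M/\f \,:\, (\hat\pi(\hat x),\hat\pi(\hat y))\in K^+_h\}=(\hat\pi\times\hat\pi)^{-1}(K^+_h).$$
It is closed because $K^+_h$ is closed and $\hat\pi\times\hat\pi$ is continuous, and transitive because $K^+_h$ is. It contains $J^+_{M/\f}$ by the forward implication of Proposition \ref{propussub}, since $\hat\pi$ maps transversely causal relations to $h$-causal ones and reflexivity gives the diagonal. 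By minimality of $K^+_\intercal$ we only get $K^+_\intercal\subset R_\intercal$, which is the wrong direction. What is needed is the reverse: if $(b,b')\in K^+_h$, then for lifts $\hat x\in\hat\pi^{-1}(b)$ and $\hat y\in\hat\pi^{-1}(b')$ we have $(\hat x,\hat y)\in K^+_\intercal$. For this, define
$$S:=\{(b,b')\in B\times B \,:\, (\hat x,\hat y)\in K^+_\intercal \text{ for all } \hat x\in\hat\pi^{-1}(b),\ \hat y\in\hat\pi^{-1}(b')\}.$$
I would show that $S$ contains $J^+_h$, is transitive, and is closed; minimality of $K^+_h$ then gives $K^+_h\subset S$.

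Each of these properties should follow from earlier results. Containment of $J^+_h$ comes from the converse part of Proposition \ref{propussub}, which is available because the foliation is simple, together with Lemma \ref{lookatthat}, which handles different lifts of the same base point: any two leaves over the same $b$ lie in $\overline{\Delta_{M/\f}}\subset K^+_\intercal$ by Lemma \ref{a-necessary-lemma}. Transitivity uses transitivity of $K^+_\intercal$, composing through any lift of the middle point. Closedness is where I expect the main obstacle. Given $(b_n,b'_n)\rightarrow(b,b')$ in $S$ and arbitrary lifts $\hat x,\hat y$, I would use local sections of $\pi$ through points of $\hat x$ and $\hat y$. These give lifts $\hat x_n\rightarrow\hat x$ and $\hat y_n\rightarrow\hat y$ in $M/\f$ because $\pi_\f$ is open. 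Then $(\hat x_n,\hat y_n)\in K^+_\intercal$, and closedness of $K^+_\intercal$ yields $(\hat x,\hat y)\in K^+_\intercal$. Since $B$ is a manifold, it is first countable, so sequences suffice.

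With this in hand, suppose $(b,b'),(b',b)\in K^+_h$ and pick lifts $\hat x,\hat y$. Then $(\hat x,\hat y),(\hat y,\hat x)\in K^+_\intercal$, so transverse $K$-causality gives $(\hat x,\hat y)\in\overline{\Delta_{M/\f}}$. Lemma \ref{a-necessary-lemma} then gives $b=\hat\pi(\hat x)=\hat\pi(\hat y)=b'$. Hence $(B,h)$ is $K$-causal, therefore stably causal by the classical theorem, and Corollary \ref{submersionstable} concludes the proof.
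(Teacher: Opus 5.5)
Your proposal is correct and is essentially the paper's proof. The paper shows that the pushforward $\hat\pi_*K^+_\intercal\subset B\times B$ is closed, transitive and contains $J^+(h)$, hence contains $K^+(h)$, and then obtains antisymmetry of $K^+(h)$ via Lemmas \ref{a-necessary-lemma} and \ref{lookatthat}, Minguzzi's theorem and Corollary \ref{submersionstable}, exactly as you do. Your ``for all lifts'' relation $S$ actually coincides with $\hat\pi_*K^+_\intercal$: any two lifts of a base point lie in $\overline{\Delta_{M/\f}}\subset K^+_\intercal$, and transitivity does the rest. So your formulation only absorbs the lift-swapping that the paper performs in its closedness and antisymmetry steps. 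One small slip: in your closedness step it is the continuity of $\pi_\f$, not its openness, that gives $\hat x_n\to\hat x$.
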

\begin{proof}

It suffices to see that $K^+(h)$ is antisymmetric, because then $(B,h)$ is stably causal and the result follows from Corollary \ref{submersionstable}. To that end, we will consider the pushforward relation $\hat{\pi}_*K^+_\intercal\subset B\times B$, and we will check that $\hat{\pi}_*K^+_\intercal$ is closed, transitive and contains $J^+(h)$.
\begin{itemize}
    \item Let $(\hat{\pi}(\hat{a}_k),\hat{\pi}(\hat{b}_k))_{k\in\mathbb{N}}\subset \hat{\pi}_*K^+_\intercal$ converge to $(\tilde{x},\tilde y)\in B\times B$. For each $k\in\mathbb{N}$ we have $(\hat{a}_k,\hat{b}_k)\in K^+_\intercal$. Choose any $(\hat{a},\hat{b})\in M/\f\times M/\f$ such that $\hat{\pi}(\hat{a})=\tilde x$ and $\hat{\pi}(\hat b)=\tilde y$. Fix an open neighborhood $\tilde x\subset\mathcal{U}$ such that all the fibers of $\pi$ inside $\pi^{-1}(\mathcal{U})\setminus\pi^{-1}(\tilde x)$ are connected. Shrinking $\mathcal{U}$, if necessary, we can assume there is a section $s:\mathcal{U}\rightarrow M/\f$ of $\hat{\pi}$ such that $s(\tilde x)=\hat{a}$. For each $k\in\mathbb{N}$ sufficiently large so that $\hat{\pi}(\hat{a}_k)$ enters $\mathcal{U}$ and no longer leaves it, define $\hat\alpha_k:=(s\circ\hat{\pi})(\hat{a}_k)$. It is clear that $\hat{\alpha}_k\to\hat a$ and $(\hat{\alpha}_k,\hat a_k)\in\overline{\Delta_{M/\f}}, \forall k$. Analogously, we obtain a sequence $\hat{\beta}_k$ which converges to $\hat b$ and $(\hat{\beta}_k,\hat b_k)\in\overline{\Delta_{M/\f}}, \forall k$. Then, for each $k$, it follows that both $(\hat{\alpha}_k,\hat a_k)$ and $(\hat b_k, \hat{\beta}_k)$ are in $K^+_\intercal$. Applying the transitivity of $K^+_\intercal$ twice, it yields $(\hat{\alpha}_k,\hat\beta_k)\in K^+_\intercal, \forall k$. Since $K^+_\intercal$ is closed, it follows that $(\hat a,\hat b)\in K^+_\intercal$, whence $(\tilde x, \tilde y)=(\hat{\pi}(\hat{a}), \hat{\pi}(\hat{b}))\in \hat{\pi}_*K^+_\intercal$, establishing that $\hat{\pi}_*K^+_\intercal$ is closed.
    \item Suppose $(\hat{\pi}(a),\hat{\pi}(b))$ and $ (\hat{\pi}(b),\hat{\pi}(c))$ are in $\hat{\pi}_*K^+_\intercal$. Then, $(a, b), (b^{\prime},c)\in K^+_\intercal$, where $\hat{\pi}(b)=\hat{\pi}(b^\prime)$. By Lemma \ref{lookatthat}, we have that $(b,b^\prime)\in K^+_\intercal$ and then $(a,c)\in R_\intercal$, whence $(\hat{\pi}(a),\hat{\pi}(c))\in\hat{\pi}_*K^+_\intercal$.
    \item Let $(a,b)\in J^+(h)$. Then, there exists a future-directed $h$-causal curve $\alpha:[0,1]\rightarrow B$ such that $\alpha(0)=a$ and $\alpha(1)=b$. We can easily obtain a $\hat{\pi}$-lift of $\alpha$ to $M/\f$ (lift it first to $M$, then project with $\pi_\f$), which yields that $(\hat{a},\hat{b})\in J^+_{M/\f}$, for some $\hat{a}\in\hat{\pi}^{-1}(a)$ and some $\hat{b}\in\hat{\pi}^{-1}(b)$. Then, $(\hat{a},\hat{b})\in K^+_\intercal$, whence $(a,b)=(\hat{\pi}(\hat{a}), \hat{\pi}(\hat{b}))\in\hat{\pi}_*K^+_\intercal$. We conclude that $\hat{\pi}_*K^+_\intercal\supset J^+(h)$.
\end{itemize}
Then $\hat{\pi}_*K^+_\intercal\supset K^+(h)$ and therefore, given $x,y \in B$ such that $(x,y),(y,x)\in K^+(h)$, we have that $(x,y),(y,x)\in \hat{\pi}_*K^+_\intercal$. That means that there are $\hat{x}_0, \hat{y}_0, \hat{x}_1, \hat{y}_1\in M/\f$ such that $\hat{\pi}(\hat{x}_0)=x=\hat{\pi}(\hat{x}_1), \hat{\pi}(\hat{y}_0)=y=\hat{\pi}(\hat{y}_1)$ and $(\hat{x}_0,\hat{y}_0),(\hat{y}_1,\hat{x}_1)\in K^+_\intercal$. By Lemmas (\ref{a-necessary-lemma}) and (\ref{lookatthat}) we have that $(\hat{x}_1,\hat{x}_0)$ and $(\hat{y}_0, \hat{y}_1)$ are in $K^+_\intercal$. Employing the transitivity of $K^+_\intercal$ twice we obtain that $(\hat{y}_0,\hat{x}_0)\in K^+_\intercal$, and since $(M,\f, g_\intercal)$ was assumed to be transversely $K$-causal, we have that $(\hat{x}_0,\hat{y}_0)\in\overline{\Delta_{M/\f}}$. By Lemma (\ref{a-necessary-lemma}), it follows that $x=\hat{\pi}(\hat{x}_0)=\hat{\pi}(\hat{y}_0)=y$, which establishes the antisymmetry of $K^+(h)$.
\end{proof}

\begin{proposition}\label{sub2-1}
    Let $(M, \f, g_\intercal)$ be a simple foliation. If $\pi$ is proper and there is a $C^0$ neighborhood $\mathcal{U}$ of $g_\intercal$ in $\mathcal{TL}(M,\mathcal{F})$ such that for all $g^\prime_\intercal\in\mathcal{U}$ we have that $(M, \mathcal{F}, g^\prime_\intercal)$ is transversely causal, then $(M, \f, g_\intercal)$ is transversely stably causal.
\end{proposition}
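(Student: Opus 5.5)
The idea is to push the hypothesis down to the base $(B,h)$, use the classical characterization of stable causality there, and return with Corollary \ref{submersionstable}. By that corollary it suffices to show $(B,h)$ is stably causal. By the spacetime result recalled at the start of Section \ref{sec:whitneytopology}, this follows once we find a $C^0$ neighborhood $\mathcal{V}$ of $h$ in $\mathrm{Lor}(B)$ such that $(B,h')$ is causal for every $h'\in\mathcal{V}$. So we take $\mathcal{V}:=\{h'\in \mathrm{Lor}(B): \pi^\ast h'\in\mathcal{U}\}$ and prove two things: that $\mathcal{V}$ is $C^0$-open in $\mathrm{Lor}(B)$, and that each $h'\in\mathcal{V}$ is causal.

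The second point is easy. If $h'\in\mathcal{V}$, then $g'_\intercal=\pi^\ast h'$ is transversely causal. Suppose $\alpha$ were a closed future-directed $h'$-causal curve in $B$. Lift it piecewise through local sections of $\pi$ exactly as in the proof of Proposition \ref{propussub}, perturbing the break points off the finitely many disconnected fibers as allowed by the third bullet of Definition \ref{simpledefi}. Glue the pieces with the Waterfall construction. This gives a future-directed transversely causal curve from a leaf $L$ to a leaf $L'$ in the same fiber $\pi^{-1}(\alpha(0))$.

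If that fiber is connected, then $L=L'$, contradicting transverse causality of $g'_\intercal$. If it is not connected, we first reparametrize $\alpha$ so that its base point $\alpha(0)$ lies over a connected fiber. By the third bullet, points with disconnected fibers are isolated, so some point of $\alpha$ has a connected fiber. (If $\alpha$ were constant there is nothing to prove, since causal curves are regular.) In either case we contradict the transverse causality of $g'_\intercal$, so $(B,h')$ is causal.

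The main obstacle is the openness of $\mathcal{V}$, i.e. continuity of the map $\pi^\ast:\mathrm{Lor}(B)\to\mathcal{TL}(M,\f)$ in the strong $C^0$ topologies. This is where properness of $\pi$ enters. A basic $C^0$ neighborhood of $\pi^\ast h$ is given by a positive continuous function $\varepsilon$ on $M$, through the condition $|\pi^\ast h'-\pi^\ast h|<\varepsilon$, measured with a fixed background Riemannian metric $\rho$ on $M$. Working through the homeomorphism of Proposition \ref{smallprime}, we may equivalently measure in $\nu\f$. Fix a Riemannian metric $\rho_B$ on $B$. On each compact set $\pi^{-1}(K)$, for $K\subset B$ compact (compact precisely because $\pi$ is proper), we have:
\begin{itemize}
\item the function $\varepsilon$ has a positive minimum;
\item the norm of $d\pi$ with respect to $\rho$ and $\rho_B$ is bounded.
\end{itemize}
Hence there is a positive continuous function $\delta$ on $B$ such that $|h'-h|_{\rho_B}<\delta$ implies $|\pi^\ast h'-\pi^\ast h|_\rho<\varepsilon$ on all of $M$. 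One can take, for instance, $\delta(b)\le \min_{\pi^{-1}(b)}\varepsilon/\max_{\pi^{-1}(b)}\|d\pi\|^2$, which is lower semicontinuous by properness, and then replace it by a smaller continuous positive function. Concretely, I would build this $\delta$ with a locally finite cover of $B$ by precompact open sets and a partition of unity, in the spirit of the construction of $f$ in the proof of Theorem \ref{widen-the-cone-widen-the-wedge}.

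Thus $\pi^\ast$ maps a $C^0$ neighborhood of $h$ into $\mathcal{U}$, and $\mathcal{V}$ contains a $C^0$-open neighborhood of $h$. Then $(B,h)$ is stably causal, and Corollary \ref{submersionstable} concludes the proof.
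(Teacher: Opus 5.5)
Your proposal is correct and follows essentially the same route as the paper. The paper also uses properness to get continuity of $\Psi=\pi^\ast:\mathrm{Lor}(B)\to\mathcal{TL}(M,\mathcal{F})$, takes $\mathcal{V}=\Psi^{-1}(\mathcal{U})$, shows each $h'\in\mathcal{V}$ is causal by lifting a closed causal curve, and then invokes the classical result on $B$. It pulls back the wider metric $h^*$ directly, which is exactly the content of Corollary \ref{submersionstable}. Your explicit construction of $\delta$ and your choice of a base point over a connected fiber make precise two steps that the paper states without detail.
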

\begin{proof}
Consider the map $\Psi: \mathrm{Lor}(B)\rightarrow \mathcal{TL}(M,\mathcal{F})$ given by $\Psi(l)=\pi^*l$. It is clearly a bijection with inverse given by a natural projection. Since $\pi$ is proper, we have that $\Psi$ is continuous, since $\Psi$ maps $l$ to its pullback with $\pi$, and such can be written as a composition of $l$ with $\pi$ and $d\pi,$  and such compositions are continuous with respect to the strong Whitney topologies iff $\pi$ is proper. Then, $\mathcal{V:=}\Psi^{-1}(\mathcal{U})$ is a open neighborhood of $h$. Pick any $h^\prime\in \mathcal{V}$. If there was any $h^\prime$-causal curve $\gamma:[a,b]\rightarrow B$ such that $\gamma(a)=\gamma(b)$, then, any lift of $\gamma$ to $M$ would yield a $\Psi(h^\prime)$-transversely causal curve starting and ending at the same leaf. This cannot happen, since 
$\Psi(h^\prime)$ is in $\mathcal{U}$. Then, $(B, h^\prime)$ is causal for every $h^\prime\in \mathcal{V}$. The classical theory then implies the existence of a Lorentzian metric $h^*$ on $B$ with $h<h^*$ and $(B, h^*)$ causal. If we set $g_\intercal^*:=\Psi(h^*)$, then it is clear that $g_\intercal<g^*_\intercal$ and $(M, \mathcal{F}, g^*_\intercal)$ is transversely causal.
\end{proof}

\begin{figure}[h]
\centering{
\scalebox{0.8}{
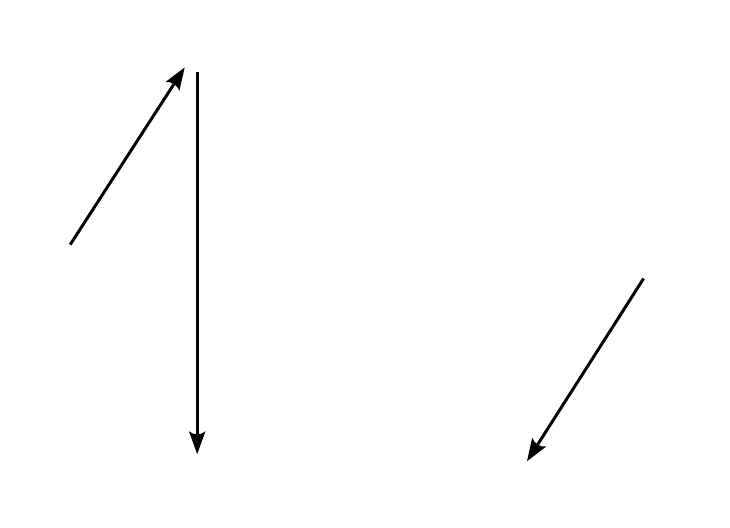}
\caption{Diagram of the logical implications established for simple foliations.}
\label{subequivalences}}
\end{figure}

\section{Remarks on a transverse causal hierarchy}\label{sec:causalhierq}

We end this paper with brief remarks on how the novel concepts discussed so far fit in the ``transverse causal hierarchy'' of Lorentzian foliations as initially and partially developed in \cite{us}. Of course, a major goal would be to prove some version of\begin{align*}
\text{transversely} &\text{ causally continuous}\\ &\Downarrow \\\text{transversely}&\text{ stably causal}\\&\Downarrow \\\text{transversely}&\text{ strongly causal},
\end{align*}exactly as we have in the spacetime case. However, at the moment the notion of transverse causal continuity has not yet been developed, and the argument for the spacetime version of the second implication relies rather heavily on the existence of normal convex neighborhoods, a transverse analogue of which is not yet at our disposal.

Nevertheless, upon the assumption of either causal leaf-regularity as in Definition \ref{regulardefi} or for simple foliations we can give partial results as we presently discuss. Recall (see \cite{us} for further details) that a Lorentzian foliation $(M,\f, g_\intercal)$ satisfies the \textit{transverse strongly causality condition} at a leaf $L\in \f$ if given any saturated open set $L\subset \mathcal{U} \subset M$ we can find another saturated open set $L\subset \mathcal{V}\subset \mathcal{U}$ in $M$ such that any transversely causal curve segment in $M$ with endpoints in $\mathcal{V}$ is entirely contained in $\mathcal{U}$. We then say $(M,\f,g_\intercal)$ itself is \textit{transversely strongly causal} if the transverse strong causality holds at each leaf. 
We now have:
\begin{teo}\label{leafregular}
Suppose the Lorentzian foliation $(M,\f, g_\intercal)$ is causally leaf-regular and its leaf space $M/\f$ is Hausdorff. If $(M,\f, g_\intercal)$ is transversely stably causal, then it is also transversely strongly causal. 
\end{teo}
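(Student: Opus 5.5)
We argue by contradiction, following the classical proof that stable causality implies strong causality, with causal leaf-regularity replacing the limit curve theorems. Suppose transverse strong causality fails at some leaf $L$. Then there is a saturated open $\mathcal{U}\supset L$ such that every saturated open $\mathcal{V}$ with $L\subset\mathcal{V}\subset\mathcal{U}$ contains the endpoints of some future-directed transversely causal curve leaving $\mathcal{U}$. We first shrink $\mathcal{U}$ so that it lies inside a causally regular neighborhood $\mathcal{B}\supset L$ (Definition \ref{regulardefi}). Next, fixing $x\in L$ and a relatively compact coordinate ball $U\ni x$ with $\overline{U}\subset\mathcal{U}$, we use a countable decreasing basis of saturations $\pi_\f^{-1}\pi_\f(U_k)$ of balls $U_k\to x$. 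This yields curves $\gamma_k$ with endpoints on leaves $L_k, L'_k\to L$ in $M/\f$ and with each $\gamma_k$ leaving $\mathcal{U}$, hence crossing $\partial U$.

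By the Causal Waterfall Lemma (\cite{us}, Lemma 4.19) we may assume $\gamma_k(0)\to x$. We then take the first exit parameter $t_k$ with $\gamma_k(t_k)\in\partial U$ and $\gamma_k[0,t_k)\subset U$, and by compactness of $\partial U$ pass to a subsequence with $\gamma_k(t_k)\to c\in\partial U$. Causal regularity applied to the pieces $\gamma_k|_{[0,t_k]}$, which lie in $\mathcal{B}$, gives $L\le_{M/\f} L_c$ (or $L=L_c$). For the return leg $\gamma_k|_{[t_k,1]}$, which ends near $L$, the cleanest route is to work with the widened metric directly. Fix $g'_\intercal>g_\intercal$ transversely causal. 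Pushing up (\cite{us}, Thm. 4.23) turns the first piece into $L\ll_{M/\f,g'_\intercal}L_c$. The return legs show $(L_c,L)\in\overline{J^+_{M/\f,g_\intercal}}$, since they run from leaves converging to $L_c$ to leaves converging to $L$. Openness of transverse chronology (\cite{us}, Cor. 4.27) for $g'_\intercal$ then upgrades this to $L_c\ll_{M/\f,g'_\intercal} L$, exactly as in the proof of Theorem \ref{theone}. Concatenating gives $L\ll_{M/\f,g'_\intercal}L$, contradicting transverse causality of $g'_\intercal$, unless $L_c=L$.

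The remaining case $L_c=L$ is where Hausdorffness enters. Since $c\in\partial U$ and $x\in L$, $c$ lies on $L$, so we must rule out a leaf returning to the boundary of a plaque. We avoid this by choosing $U$ to be a plaque-adapted (foliated chart) neighborhood whose saturation separates: using Hausdorffness of $M/\f$ together with the local structure, we choose instead of $U$ a saturated neighborhood $\mathcal{W}$ of $L$ and a compact "shell" $K$ of leaves outside $\mathcal{W}$ but inside $\mathcal{U}$. Concretely, we pick in $M/\f$ (locally compact, Hausdorff) a compact neighborhood $C$ of $L$ inside $\pi_\f(\mathcal{U})$, and look at exit points on the leaf-space boundary $\partial C$. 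Every $\gamma_k$ must meet $\pi_\f^{-1}(\partial C)$. Choosing representatives in a fixed compact transversal (possible by second countability and local compactness of the leaf space), we extract convergent first-hit points whose leaf $L_c\in\partial C$ is automatically $\neq L$.

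The main obstacle is precisely this compactness extraction. Leaf-space convergence must be lifted to convergence of actual points on curves so that causal regularity can be applied to the subarcs. I would handle it by covering $\pi_\f^{-1}(\partial C)$ by finitely many relatively compact transversals, which is possible since $\partial C$ is compact and $\pi_\f$ is open, and using the Waterfall Lemma to slide curve endpoints onto them.
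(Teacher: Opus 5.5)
Once you replace the coordinate ball by a compact neighbourhood $C$ of $\pi_\f(L)$ inside $\pi_\f(\mathcal{U}\cap\mathcal{B})$ in the Hausdorff, locally compact leaf space, and cut at the first exit through $\partial C$ (so that $L_c\neq L$ automatically), your argument is essentially the paper's proof. The paper takes a relatively compact open $\mathcal{O}$, cuts each curve at both its first exit from and last entry into $\partial\mathcal{O}$, applies causal leaf-regularity to both end pieces, and closes the loop with Theorem \ref{theone}. Your ``main obstacle'' does not actually arise: Definition \ref{regulardefi} only involves convergence of leaves in $M/\f$, so compactness of $\partial C$ in $M/\f$ is enough, and no transversals or Waterfall sliding are needed.

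The one step that needs repair is the upgrade of $(L_c,L)\in\overline{J^+_{M/\f,g_\intercal}}$ to $L_c\ll_{M/\f,g'_\intercal}L$. Openness of chronology alone does not give this. You have two ways to fix it.
\begin{enumerate}
\item Also cut the return leg at its last entry into $C$. Causal regularity then gives $L_{c'}<_{M/\f}L$ with $L_{c'}\in\partial C$. Openness of $I^{\pm}_{M/\f}(L)$ for $g'_\intercal$ then yields $L\ll_{M/\f,g'_\intercal}L$ directly.
\item Invoke Theorem \ref{theone}, as the paper does.
\end{enumerate}
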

\begin{proof}
Suppose the transverse strong causality condition fails at some leaf $L\in \mathcal F$. Then there is a saturated open set $L\subset \mathcal U$ in $M$ and a net $(\gamma_i:[0,1] \rightarrow M)_{i\in I}$ of future-directed, $g_\intercal$-transversely causal curves such that $L_{\gamma_i(0)},L_{\gamma_i(1)}\rightarrow L$ in $M/\f$, and for each $i\in I$ there is a $0<t_i<1$ so that $\gamma_i(t_i) \notin \mathcal{U}$. Let $\mathcal{B}$ denote an open saturated neighborhood of $L$ in $M$ with ``closed transverse causal relation'' as warranted by causal leaf-regularity.

Observe that the leaf space is locally compact: given a leaf $L'$, one can pick a compact set $K$ such that $(int _M\, K)\cap L'\neq \emptyset$, and since the projection $\pi_\f:M\rightarrow M/\f$ is an open, continuous map, we have that 
$$L' \in \pi_\f(int_M\, K)\subset \pi_\f(K),$$
thus proving the claim. Since we are also assuming that $M/\f$ is Hausdorff, we can pick a relatively compact open set $\mathcal{O}\ni \pi_\f(L)$ in $M/\f$ such that $\overline{\mathcal O}^{M/\f}\subset \pi_\f(\mathcal{B}\cap\mathcal{U})$. We eventually have $\gamma_i(0),\gamma_i(1) \in \pi_\f^{-1}(\mathcal{O})\subset \mathcal{U}$, and we can assume by passing to a subnet if necessary that this is always the case.

Now, for each $i\in I$, the fact that $\gamma_i(t_i) \notin \mathcal{U}\equiv \pi_\f^{-1}(\pi_\f(\mathcal{U))}$ means that $\pi_\f(\gamma_i(t_i)) \notin \pi_\f(\mathcal{U})$, and hence the continuous curve $\pi_\f\circ \gamma_i:[0,1] \rightarrow M/\f$ leaves $\mathcal{O}$, that is, for each $i\in I$ we have numbers $0<s_i<t_i<s'_i<1$ such that $(1)$ $\pi_\f(\gamma_i(s_i)),\pi_\f(\gamma_i(s'_i)) \in \partial_{M/\f} \mathcal{O}$ and $(2)$ $\pi_\f\circ \gamma_i[0,s_i),\pi_\f \circ\gamma_i(s'_i,1] \subset \mathcal{O}$. By the compactness of $\partial _{M/\f}\mathcal{O}$ we again can assume, again up to passing to a subnet, that $\pi_\f(\gamma_i(s_i))\rightarrow \hat{x},\pi_\f(\gamma_i(s'_i)\rightarrow \hat{y}$ in $M/\f$ with $\hat{x},\hat{y}\in \partial_{M/\f}\mathcal{O}$. 

However, the fact that $\gamma_i[0,s_i], \gamma_i[s'_i,1] \subset \pi_\f^{-1}(\mathcal{O})\subset \mathcal{B}$ means - by causal leaf-regularity - that, passing to the limit, 
\begin{equation}\label{eqforladder1}
L<_{M/\f}L':= \pi_\f^{-1}(\hat{x}) \text{ and } \pi_\f^{-1}(\hat{y}) =: L''<_{M/\f}L. \end{equation}

Observe next that given any transverse Lorentz metric $g_\intercal <\tilde g_\intercal$ we must have that the curve $\gamma_i|_{[s_i,s_i']}$ is $\tilde g_\intercal$-transversely timelike; thus, on the one hand, we can conclude from the arbitrariness of $\tilde g_\intercal$ that 
$$(L_{\gamma_i(s_i)},L_{\gamma_i(s'_i)}) \in J^+_{S\intercal}, \forall i \in I,$$
and since the transverse Seifert relation is closed by Thm. \ref{theone}, we also have 
$$(L',L'') \in J^+_{S\intercal};$$
on the other hand, by \eqref{eqforladder1} we must have 
$$L\ll_{\tilde g_\intercal}L'\leq_{\tilde g_\intercal} L\ll_{\tilde g_\intercal} L.$$ 
But since $(M,\f, g_\intercal)$ is transversely stably causal, we can pick $\tilde{g}_\intercal$ in the previous relations so that $(M,\f, \tilde g_\intercal)$ is transversely causal, in contradiction.
\end{proof}

We also recall \cite[Def. 5.3]{us} that $(M,\f,g_\intercal)$ is \textit{transversely globally hyperbolic} if it is transversely strongly causal and the \textit{causal prisms}
$$J_\intercal(L, L'):= J^+_\intercal(L)\cap J^-_\intercal(L')$$
are transversely compact in $M$ for every $L,L'\in \f$. We thus have:
\begin{teo}
   If $(M, \f,g_\intercal)$ is simple, then 
   \begin{align*}
\text{transversely} &\text{ globally hyperbolic}\\ &\Downarrow \\\text{transversely}&\text{ stably causal}\\&\Downarrow \\\text{transversely}&\text{ strongly causal}.
\end{align*}
 
\end{teo}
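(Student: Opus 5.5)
The strategy is to transport everything to the base spacetime $(B,h)$ and use the classical causal ladder there, with Corollary \ref{submersionstable} as the bridge. Proposition \ref{propussub} shows that, for a simple foliation, the transverse relations $\ll_{M/\f}$ and $\leq_{M/\f}$ correspond exactly to $\ll_h$ and $\leq_h$ via $\pi$. So each transverse rung should reduce to the matching rung for $(B,h)$.

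\emph{Transversely globally hyperbolic $\Rightarrow$ transversely stably causal.} The goal is to show that $(B,h)$ is globally hyperbolic.
\begin{enumerate}
\item For causality of $(B,h)$: a closed causal curve in $B$ lifts, via local sections as in the proof of Proposition \ref{propussub}, to a transversely causal curve between leaves of a single fiber. Tilting the breakpoints to connected fibers makes these leaves coincide, except possibly at the fiber over the base point. Near that point, strong causality of $(M,\f,g_\intercal)$ prevents a loop at a leaf. In fact it suffices to get $(B,h)$ causal; strong causality of $(B,h)$ is not needed.
\item For the diamonds: by Proposition \ref{propussub}, $J^+_\intercal(L)\cap J^-_\intercal(L')$ projects under $\pi$ onto $J^+_h(\pi(L))\cap J^-_h(\pi(L'))$, at least up to the degenerate fibers. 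Transverse compactness then gives compactness of the image under the continuous map $\hat\pi: M/\f\to B$.
\item By the Bernal--Sánchez characterization (causal plus compact diamonds), $(B,h)$ is globally hyperbolic, hence stably causal. Corollary \ref{submersionstable} then gives transverse stable causality.
\end{enumerate}
The delicate point is handling the non-connected fibers when lifting, both for loops and for the surjectivity of the prism projection. For this I would use the third condition in Definition \ref{simpledefi} exactly as in the proof of Proposition \ref{propussub}.

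\emph{Transversely stably causal $\Rightarrow$ transversely strongly causal.} By Corollary \ref{submersionstable}, $(B,h)$ is stably causal, hence strongly causal by the classical ladder. Fix a leaf $L\subset\pi^{-1}(b)$ and a saturated open $\mathcal U\supset L$. Since the fibers near $b$ (other than over $b$) are connected, one can shrink $\mathcal U$ to a set of the form $\pi^{-1}(\mathcal W)$ intersected with a saturated piece containing $L$. Take a strongly causal neighborhood $\mathcal N\subset\mathcal W$ of $b$, and let $\mathcal V$ be the saturated open set lying over $\mathcal N$ inside $\mathcal U$. A transversely causal curve with endpoints in $\mathcal V$ projects to an $h$-causal curve with endpoints in $\mathcal N$, so it stays in $\pi^{-1}(\mathcal W)$. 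It remains to see that the curve stays in the component containing $L$ rather than jumping to another component of $\pi^{-1}(b)$; the curve is connected and $\mathcal U$ is already saturated, so this should follow.

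Alternatively, combine Proposition \ref{simplesubexe} (causal leaf-regularity) with Theorem \ref{leafregular}. That route needs a Hausdorff leaf space, however, which fails when fibers are disconnected. So I would commit to the direct projection argument above.

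The main obstacle I expect is this bookkeeping at the exceptional fibers $\pi^{-1}(b)$ with several components. Choosing $\mathcal V$ small, using the connectedness of nearby fibers, should suffice there.
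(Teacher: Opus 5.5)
\textbf{The gap.} It is in the last step of your second implication: ``the curve is connected and $\mathcal U$ is already saturated, so this should follow.'' This does not follow, and it cannot be repaired. A transversely causal curve whose endpoints lie over points $b'\neq b$ near $b$ can cross a component of $\pi^{-1}(b)$ other than $L$, and that component need not lie in $\mathcal U$.

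Take Example \ref{exem-antisym-fails}: $M=\mathbb R^3\setminus\{0\}$ with $\pi(t,x,y)=(t,x)$. It is simple, and it is transversely stably causal by Corollary \ref{submersionstable}. The fibre over $(0,0)$ has two components $L_\pm=\{(0,0,y):\pm y>0\}$. Since $L_-$ is closed in $M$, $\mathcal U:=M\setminus L_-$ is a saturated open neighbourhood of $L_+$. Any open $\mathcal V$ with $L_+\subset\mathcal V\subset\mathcal U$ contains $(\pm\epsilon,0,1)$ for small $\epsilon>0$. Now consider the piecewise smooth curve $s\mapsto(s,0,-1+2|s|/\epsilon)$, $s\in[-\epsilon,\epsilon]$. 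Its velocity projects to $\partial_t$, so it is future-directed transversely timelike. It joins $(-\epsilon,0,1)$ to $(\epsilon,0,1)$ and passes through $(0,0,-1)\in L_-$.

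So transverse strong causality fails at $L_+$. The implication ``transversely stably causal $\Rightarrow$ transversely strongly causal'' is false for simple foliations with a disconnected fibre, and no choice of $\mathcal V$ can rescue your argument. The paper gives no proof of this implication either; it dismisses it as ``analogous but much simpler''. Your projection argument is valid exactly when all fibres of $\pi$ are connected. In that case it is immediate, because $\hat\pi:M/\f\to B$ is a continuous open bijection, hence a homeomorphism.

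\textbf{The first implication.} This part is sound, and it takes a different route from the paper. The paper proves \emph{strong} causality of $(B,h)$ directly: it lifts an $h$-causal curve with endpoints in $\pi(\mathcal V)$, uses the Causal Waterfall Lemma to make the lift start and end in $\mathcal V$, and then extracts compact diamonds from transversely compact prisms by a subsequence argument. You only need causality of $(B,h)$ and let the Bernal--S\'anchez theorem (causal plus compact diamonds) do the rest. This trades endpoint-matched lifts for a heavier citation. Your compactness step, that $\pi$ maps the prism onto the diamond and $\hat\pi$ is continuous, is cleaner than the paper's.

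Your worries about degenerate fibres are moot here, for the following reason. Let $L_1,L_2$ be two components of some fibre $\pi^{-1}(b)$, and run the construction above with $\mathcal U=M\setminus L_2$. Lift a short timelike curve through $b$ by a local section passing through a point of $L_2$. Its endpoints lie on the connected nearby fibres, hence in any saturated open $\mathcal V\supset L_1$, yet the curve meets $L_2$. So transverse strong causality already forces every fibre of $\pi$ to be connected. Under transverse global hyperbolicity, $M/\f\cong B$, and each of your steps transfers verbatim.
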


\begin{proof}
We adopt here the notation and conventions in section \ref{sec:submersions}. By Corollary \ref{submersionstable}, to show the first implication it suffices to show that $(B,h)$ is stably causal. We do this by showing it is globally hyperbolic.

If an open set $\mathcal{U}\subset B$ is given, and $x\in \mathcal{U}$, then $\pi^{-1}(\mathcal{U})$ is an $\f$-saturated open set in $M$ containing $L_x$, and by the transverse strong causality condition at $L_x$ we have an $\f$-saturated open set $L_x\subset \mathcal{V}\subset \pi^{-1}(\mathcal{U})$ such that any transverse causal curve with endpoints in $\mathcal{V}$ lies entirely in $\pi^{-1}(\mathcal{U})$. Suppose $\gamma:[a,b]\rightarrow B$ is $h-$causal with $\gamma(a),\gamma(b)\in\pi(\mathcal{V})$. Now, fix some $p\in\pi^{-1}(\gamma(a))\cap\mathcal{V}$ and some $q\in\pi^{-1}(\gamma(b))\cap\mathcal{V}$ and consider a lift $\hat\gamma:[a,b]\rightarrow M$ of $\gamma$ starting at $p$, which is transversely timelike. Since the submersion $\pi$ is as in section \ref{sec:submersions}, by a repeated application of the Causal Waterfall lemma, we can deform $\hat\gamma$ in the vertical directions in order to ensure that $\hat\gamma(b)=q$. Then, $\hat{\gamma}$ lies entirely inside of $\pi^{-1}(\mathcal{U})$ and we conclude that $\gamma$ is contained in $\mathcal{U}$, thus establishing that $(B,h)$ satisfies strongly causality at $x$, and since the latter is arbitrary, everywhere.

Now, we show that the causal diamonds of $(B,h)$ are compact. Indeed, let $p, q\in B$ with $p<_hq$ and denote $J(p,q)=J^+(p)\cap J^-(q)$. Suppose $(x_k)_{k\in\mathbb{N}}\subset J(p,q)$ is any sequence. Let $\hat{p}\in\pi^{-1}(p)$ and $\hat q\in \pi^{-1}(q)$, and for each $k\in\mathbb{N}$ choose some $\hat{x}_k\in\pi^{-1}(x_k)$. A simple lift argument shows that each $\hat{x}_k$ is contained in the causal prism $J_\intercal(\hat{p}, \hat{q})$, which is transversely compact by hypothesis. Then, up to passing to a subsequence, we can assume $(\hat{x}_k)_{k\in\mathbb{N}}$ converges in $M/\f$ to a leaf $L\in J_\intercal(\hat{p}, \hat{q})$, and by the continuity of $\pi$ we conclude that some subsequence of $(x_k)_{k\in\mathbb{N}}$ converges to some $x\in J(p,q) \equiv \pi(J_\intercal(\hat{p}, \hat{q}))$.

The proof of the second implication is analogous but much simpler, thus we omit it. 
\end{proof}

\section*{Acknowledgments}
This study was supported by the Fundacao de Amparo a Pesquisa e Inovacao do Estado de Santa Catarina (FAPESC) process 985/2025.

\appendix

\section{The $C^r$ topology for $\mathcal{TL}^r(M,\mathcal{F})$ } \label{appendixA}
In this appendix we expand on the technical details of the $C^r$ topology over $\mathcal{TL}^r(M,\mathcal{F}).$

First, note  that $\mathcal{TL}^r(M,\mathcal{F})$ is a subset of the broader space of symmetric $(0,2)$-type tensor fields of class $C^r$ on $M$. We denote as $\mathrm{Sym}^{(0,2)}(M)$ the vector bundle of such tensors. Our aim here is to identify $\mathcal{TL}^r(M,\mathcal{F})$ with a subspace of sections of some ``well behaved'' subbundle of $\mathrm{Sym}^{(0,2)}(M).$ 

Given any smooth fiber bundle $\pi: E\to M$, we denote it simply by its total space $E$ if there is no risk of confusion, and by $\Gamma^r(E)$ we denote the space of the $C^r$ sections of $E$. Consider the subset  $\mathcal S\subset \mathrm{Sym}^{(0,2)}(M)$ given as follows: $g \in \mathcal S$ iff for all $x \in M,$  
$$T_x \mathcal F \subseteq \ker \, g_x:= \{ v \in T_xM \, : \, g_x(v,w) = 0, \forall w \in T_xM\}.$$

Given a foliated chart $(\mathcal{U}, \varphi=(x^1,\ldots,x^n))$ of $(M, \mathcal F)$ we adopt the convention that the plaques of $\mathcal{F}$ are the fibers of $(x^1,\ldots, x^q)$, and its transversals are the fibers of $(x^{q+1}, \ldots, x^n)$, so that, for each $a=1, \ldots, q$ we have that $\partial/\partial x^a$ is transverse to the leaves, whereas for each $i=q+1,\ldots, n$ the coordinate vector field $\partial/ \partial x^i$ is tangent to the leaves. For $g \in \mathcal S,$ we always have
\[ \begin{aligned}
	g_{\mu i}=g_{i\mu} &= g\left(\frac{\partial}{\partial x^i},\frac{\partial}{\partial x^\mu}\right) = 0, \quad q+1\leq i\leq n, 1\leq \mu\leq n.
\end{aligned}
\]
Denote by $S^{(0,2)}\varphi$ the local trivialization of the bundle $\mathrm{Sym}^{(0,2)}(M)$ induced by the chosen coordinate chart. We see that if  $g \in \mathcal S$, then, for all $x \in M,$ $S^{(0,2)}\varphi(g_x)$ has the form $(x, G(x)),$ where $G(x)$ is the symmetric $n\times n$ matrix
\begin{equation}\label{eq:matrix1}
	\left[
	\begin{tabular}{l|l}
		$[g_{ab}(x)]_{q\times q}$ & $0_{q\times (n-q)}$        \\  \hline
		$0_{(n-q)\times q}$ & $0_{(n-q)\times (n-q)}$
	\end{tabular}
	\right].
\end{equation}
It follows that $\mathcal S$ is a closed vector subbundle of  $\mathrm{Sym}^{(0,2)}(M).$ We consider further the subbbundle $\mathcal N_1({\mathcal S})\subset \mathcal S$ of bilinear forms $g \in \mathcal S$ for which, in local coordinates as above, the submatrix  $[g_{ab}]$ in (\ref{eq:matrix1}) is non-singular and has index equal to 1 (that is, its associated bilinear form on $\mathbb R^{q}$ has this property). This set of matrices is open in the set of symmetric $q \times q$ matrices, and therefore $\mathcal N_1({\mathcal S})$ is actually an open subbundle of $\mathcal S.$ 

Consider now the space of sections $\Gamma^r (\mathcal S)$ with the Whitney strong $C^r$ topology. The space  $\Gamma^r(\mathcal N_1(\mathcal S)),$ which will be denoted by $\mathrm{Lor}^r(\mathcal S)$ ($\mathrm{Lor}(\mathcal S)$ for $r = \infty$) can be embedded into $\Gamma^r(\mathcal S)$ as a $C^r$-open subset (this follows by adapting Proposition 9.2.13 in  \cite{margalef_differential_1992} together with the standard definition of the Whitney topologies in terms of jet bundles). Now, $\Gamma^r(\mathcal S)$ is closed in the Whitney \textit{weak} $C^r$ topology on $C^r(M, \mathcal S)$, and thus it is a \textit{Baire space}, i.e., countable intersections of dense open sets is dense (conf. \cite{hirsch_differential_1976}, Theorem 4.4 for finite $r$; for the $C^\infty$ topology one can easily adapt the proof in \cite{cinfbaire} for the Baire property, adding the condition of a weakly $C^\infty$ subset). Hence, $\mathrm{Lor}^r(\mathcal S)$ is also a Baire space, since it is open in $\Gamma^r(\mathcal S).$ 

The space $\mathrm{Lor}^r(\mathcal S)$ is not yet exactly what we want, since tensor fields in $\mathrm{Lor}^r(\mathcal S)$ need not be holonomy-invariant. Nevertheless, $\mathcal{TL}^r(M,\mathcal{F})$ is a subset of $\mathrm{Lor}^r(\mathcal S).$ Now, for each fixed $V \in  \mathfrak{X}(\mathcal F),$ the function
\[
\begin{aligned}
	\mathcal L_V : \mathrm{Lor}^r(\mathcal S) &\to \Gamma^{r-1}(\mathrm{Sym}^{(0,2)}(M))\\
	g &\mapsto \mathcal L_V g
\end{aligned}
\]
is continuous with respect to the weak $C^r$ topology on $\mathrm{Lor}^r(\mathcal S)$ and the weak $C^0$ topology on the codomain (see \cite{lerner72}, 3.4 in p. 20 for the strong topology case; the proof for the weak topology is completely analogous), therefore $\left(\mathcal L_V\right)^{-1}(0)$ is $C^r$ weakly closed in   $\mathrm{Lor}^r(\mathcal S).$ Clearly
\[
\mathcal{TL}^r(M,\mathcal{F}) = \bigcap_{V \in \mathfrak{X}(\mathcal F) } \left(\mathcal L_V\right)^{-1}(0) \subseteq \mathrm{Lor}^r(\mathcal S),
\] 
and $\mathcal{TL}^r(M,\mathcal{F})$ is weakly closed in $\mathrm{Lor}^r(\mathcal S)$, whence it is a Baire space in the strong Whitney $C^r$ topology as well. We have proved: 

\begin{proposition}\label{smallA}
	For each $1\leq r\leq\infty$ the set $\mathcal{TL}^r(M,\mathcal{F})$ of transverse Lorentzian metrics on $(M,\f)$ of class $C^r$ is a Baire space in the strong Whitney $C^r$ topology.
\end{proposition}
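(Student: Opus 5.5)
The argument realizes $\mathcal{TL}^r(M,\mathcal{F})$ as a subspace of a space of sections that is known to be Baire, and then shows that the subspace inherits the property. I would proceed in three steps.

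\emph{Step 1: the ambient bundle.} Let $\mathcal S\subset \mathrm{Sym}^{(0,2)}(M)$ consist of the symmetric forms whose kernel contains $T\mathcal F$. In a foliated chart such a form has the block shape with only the transverse $q\times q$ block nonzero. Hence $\mathcal S$ is a smooth closed vector subbundle. Inside it, let $\mathcal N_1(\mathcal S)$ be the set of forms whose transverse block is nondegenerate of index $1$; this is an open subbundle. A transverse Lorentzian metric is then exactly a holonomy-invariant $C^r$ section of $\mathcal N_1(\mathcal S)$.

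\emph{Step 2: Baire property of the ambient spaces.} For the vector bundle $\mathcal S$, the space $\Gamma^r(\mathcal S)$ is a weakly closed subset of $C^r(M,\mathcal S)$. By the standard Whitney results (Hirsch for finite $r$, with the usual adaptation for $r=\infty$), it is therefore Baire in the strong $C^r$ topology. Sections of the open subbundle $\mathcal N_1(\mathcal S)$ form a strongly open subset of $\Gamma^r(\mathcal S)$, using the jet-bundle description of the strong topology. Open subspaces of Baire spaces are Baire, so $\Gamma^r(\mathcal N_1(\mathcal S))$ is Baire.

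\emph{Step 3: cutting out holonomy invariance.} For each vertical field $V$, the map $g\mapsto \mathcal L_V g$ is continuous into $C^{r-1}$ sections; it is first order in $g$, and I would adapt Lerner's argument to see this. So each zero set $\mathcal L_V^{-1}(0)$ is closed, and $\mathcal{TL}^r(M,\mathcal F)$ is the intersection of these zero sets over all $V$, hence closed in $\Gamma^r(\mathcal N_1(\mathcal S))$. Closedness should be verified in the weak topology, which is coarser, so that it holds in the strong one as well. A closed subspace of a Baire space is not Baire in general, so I would not invoke a general principle here. Instead I would rerun the completeness-type argument behind the Baire property of $C^r(M,\mathcal S)$ directly on the closed set. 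Concretely, I would show that $\mathcal{TL}^r$ is closed in the complete-metric-like structure of $C^r(M,\mathcal S)$ restricted to the open piece. Equivalently, I would show it is a closed subset of a space on which the Hirsch argument works locally: limits of Cauchy sequences under uniform $C^r$ convergence on compacta remain holonomy-invariant. This gives the Baire property, with $r=\infty$ handled via the Fréchet-type structure as in the cited reference.

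The main obstacle is this last transfer of the Baire property to a closed subset intersected with an open one. That step is where the precise topology (strong versus weak, finite versus infinite $r$) matters, and it is the step I would check most carefully.
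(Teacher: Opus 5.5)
Your proposal is correct and follows the paper's proof essentially step by step: the same closed subbundle $\mathcal S$ and open subbundle $\mathcal N_1(\mathcal S)$, the same Hirsch/Gravesen Baire property for weakly closed sets of sections, and the same weak continuity of $g\mapsto\mathcal L_V g$ to cut out holonomy invariance. The final step you flag as the main obstacle needs no rerun of the Baire argument if you intersect in the other order. The holonomy-invariant sections of $\mathcal S$ already form a weakly closed subset of $C^r(M,\mathcal S)$, hence a Baire space in the strong topology by exactly the principle you invoke in Step~2, and $\mathcal{TL}^r(M,\mathcal F)$ is a strongly open subset of it, hence Baire (the paper itself only notes weak closedness inside $\mathrm{Lor}^r(\mathcal S)$ and passes over this point).
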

\qcd

\section{$\mathcal{TL}^r(M,\mathcal{F})$ and   $\mathrm{Lor}^r_\intercal(\nu\mathcal F)$ are homeomorphic} \label{appendixB} 

For $\nu\f=TM/ T\mathcal F$, consider the vector bundle morphism
\[
\begin{aligned}
	\Phi : 	\mathcal S &\to \mathrm{Sym}^{(0,2)}(\nu\mathcal F)\\
	b &\mapsto \bar{b}, 
\end{aligned}
\]
where $\bar{b}(\bar{v},\bar{v}) = b(v,v).$ It is easy to see the independence of the choice of $v \in T_xM,$ since if $v \sim v',$ then $v - v'\in T_x \mathcal F_x.$ By writing a local version of  $\Phi$ with local trivializations induced by a foliated atlas, the smoothness of $\Phi$ is clear. It is also a surjection fiberwise, and therefore a vector bundle isomorphism, since $\mathcal S$ and $\mathrm{Sym}^{(0,2)}(\nu\f)$ have the same rank. Given that the $C^r$ sections over a vector bundle form a $C^r$ module for the topological ring $C^r(M)$ with $C^r$ topology  (\cite{mather1969infinitesimal} section 2 corollary 2), $\Phi$ induces a topological module isomorphism

\[
\begin{aligned}
	\Phi_\# : 	\Gamma^r(\mathcal S) &\to \Gamma^r(\mathrm{Sym}^{(0,2)}(\nu\mathcal F))\\
	g & \mapsto \Phi \circ g.  
\end{aligned}
\]
Denoting by $\mathcal N_1(\nu\mathcal F) \subseteq \mathrm{Sym}^{(0,2)}(\nu\mathcal F)$ the open subbundle of fiberwise Lorentzian scalar products and by $\mathrm{Lor}^r(\nu \mathcal F)$ the space of $C^r$  sections of $\mathcal N_1(\nu \mathcal F)$  (again, denoted $\mathrm{Lor}(\nu \mathcal F)$ for $r = \infty$),   $\Phi_\#$ restricts to a homeomorphism $	\Phi_\# : 	\mathrm{Lor}^r(\mathcal S) \to \mathrm{Lor}^r(\nu\mathcal F).$ 

Recall that,  for  $ B \in \mathrm{Sym}^{(0,2)}(\nu\mathcal F)$ the Lie derivative in the direction of $V \in \mathfrak{X}(\mathcal F)$ is
\[
\mathcal L_V B(\overline X, \overline Y) =  V(B(\overline X, \overline Y)) -  B(\mathcal L_V \overline X, \overline Y) - B(\overline X, \mathcal L_V \overline  Y), \quad \overline X, \overline Y \in \Gamma^r(\nu\mathcal F),
\]
where $\mathcal L_V \overline X = \overline{[V,X]},$ where we have defined the holonomy-invariant $(0,2)$-symmetric tensor fields of $\nu\mathcal F$ as those $B \in \Gamma^r(\mathrm{Sym}^{(0,2)}(\nu\mathcal F))$ such that $\mathcal L_V B = 0$ for all $V \in \mathfrak{X}(\mathcal F),$ and we denoted $\mathrm{Lor}^r_\intercal(\nu\mathcal F)$ the $C^r$ holonomy-invariant Lorentzian fiber metrics of $\nu\mathcal F$. Further restricting $\Phi_{\#}$ to $\mathcal{TL}^r(M,\mathcal{F})$ with its induced $C^r$ topology, this restriction is then a homeomorphism onto $\mathrm{Lor}^r_\intercal(\nu\mathcal F)$.

\newpage

\printbibliography

@book{hirsch_differential_1976,
	address = {New York, NY},
	series = {Graduate {Texts} in {Mathematics}},
	title = {Differential {Topology}},
	volume = {33},
	publisher = {Springer New York},
	author = {Hirsch, Morris W.},
	year = {1976},
	
}

@book{wald_general_1984,
	address = {Chicago},
	title = {General relativity},
	publisher = {University of Chicago Press},
	author = {Wald, Robert M.},
	year = {1984},
}

@book{margalef_differential_1992,
	address = {Amsterdam},
	title = {Differential {Topology}},
	publisher = {North-Holland},
	author = {Margalef-Roig, J. and Outerelo Domínguez, E.},
	year = {1992},
}

@article{lerner,
	title = {The space of {Lorentz} metrics},
	volume = {32},
	number = {1},
	journal = {Communications in Mathematical Physics},
	author = {Lerner, David E.},
	year = {1973},
	pages = {19--38},
	doi = {10.1007/BF01646426}
}

@thesis{lerner72,
title = {The space of Lorentz metrics on a non-compact manifold},
institution = {University of Pittsburgh},
author = {Lerner, David E.},
year = {1972},
type  = {PhD dissertation},
}

@misc{caramello2024transversegeometrylorentzianfoliations,
	title={Transverse Geometry of Lorentzian foliations with applications to Lorentzian orbifolds}, 
	author={Francisco C. Caramello Jr and Henrique A. Puel Martins and Ivan P. Costa e Silva},
	year={2024},
	eprint={2402.05907},
	archivePrefix={arXiv},
	primaryClass={math.DG},
	url={https://arxiv.org/abs/2402.05907}, 
}

@article{mather1969infinitesimal,
	title={Stability of $C^\infty$ mappings II. Infinitesimal stability implies stability},
	author={Mather, JN},
	journal={Ann. of Math},
	volume={89},
	pages={254--291},
	year={1969}
}

@misc{caramello3,
      title={Introduction to orbifolds}, 
      author={Caramello, Jr., F. C.},
      year={2022},
      eprint={1909.08699},
      archivePrefix={arXiv},
      primaryClass={math.DG},
      url={https://arxiv.org/abs/1909.08699}, 
}

@book{oneillbook,
    author = {O'Neill, B.},
    title = {Semi-Riemannian Geometry with Applications to Relativity},
    publisher = {Academic Press},
    year = {1983},
adress = {New York}
}

@book{beem,
    author = {Beem, J. K. and Ehrlich, P. E. and Easley, K.},
    title = {Global Lorentzian geometry},
    publisher = {Marcel Dekker Inc.},
    year = 1996,
edition = {2nd},
address = {New York}
}

@article{minguzzi-sanchez,
    author = {Minguzzi, E. and Sánchez, M.},
    title = {The causal hierarchy of spacetimes},
    journal = {ESI Lectures in Mathematics and Physics},
    year = 2008,
publisher = {European Mathematical Society Publ. House},
address = {Zürich}
}

@book{candel,
    author = {Candel, A. and Conlon L.},
    title = {Foliations I},
    publisher = {American Mathematical Society},
    year = {2000}
}

@book{hawking-ellis,
    author = {Hawking, S. and Ellis, G. F. R.},
    title = {The large scale structure of space-time},
    publisher = {Cambridge Monographs on Math. Physics},
    year = {1973}
}

@book{mrcun,
    author = {Moerdijk, I. and Mr\v{c}un, J.},
    title = {Introduction to Foliations and Lie Groupoids},
    publisher = {Cambridge Univerisity Press},
    year = {2003}
}

@book{molino,
    author = {Molino, P.},
    title = {Riemannian Foliations},
    publisher = {Birkhäuser},
    year = {1988}
}

@misc{us,
      title={Transverse Geometry of Lorentzian foliations with applications to Lorentzian orbifolds}, 
      author={Caramello, Jr., F.C. and Puel Martins, H. A. and Costa e Silva, I. P.},
      year={2024},
      eprint={2402.05907},
      archivePrefix={arXiv},
      primaryClass={math.DG},
      url={https://arxiv.org/abs/2402.05907}, 
}

@article{Minguzzi_2009,
   title={K-Causality Coincides with Stable Causality},
   volume={290},
   ISSN={1432-0916},
   url={http://dx.doi.org/10.1007/s00220-009-0794-4},
   DOI={10.1007/s00220-009-0794-4},
   number={1},
   journal={Communications in Mathematical Physics},
   publisher={Springer Science and Business Media LLC},
   author={Minguzzi, E.},
   year={2009},
   month=apr, pages={239–248} }

@article{Minguzzi_2007a,
   title={The causal ladder and the strength of K-causality: I},
   volume={25},
   ISSN={1361-6382},
   url={http://dx.doi.org/10.1088/0264-9381/25/1/015009},
   DOI={10.1088/0264-9381/25/1/015009},
   number={1},
   journal={Classical and Quantum Gravity},
   publisher={IOP Publishing},
   author={Minguzzi, E.},
   year={2007},
   month=dec, pages={015009} }

@article{Minguzzi_2007,
   title={The causal ladder and the strength of K-causality: II},
   volume={25},
   ISSN={1361-6382},
   url={http://dx.doi.org/10.1088/0264-9381/25/1/015010},
   DOI={10.1088/0264-9381/25/1/015010},
   number={1},
   journal={Classical and Quantum Gravity},
   publisher={IOP Publishing},
   author={Minguzzi, E.},
   year={2007},
   month=dec, pages={015010} }

@article{Bernal_2005,
   title={Smoothness of Time Functions and the Metric Splitting of Globally Hyperbolic Spacetimes},
   volume={257},
   ISSN={1432-0916},
   url={http://dx.doi.org/10.1007/s00220-005-1346-1},
   DOI={10.1007/s00220-005-1346-1},
   number={1},
   journal={Communications in Mathematical Physics},
   publisher={Springer Science and Business Media LLC},
   author={Bernal, Antonio N. and Sánchez, Miguel},
   year={2005},
   month=apr, pages={43–50} }

@article{molino2,
    author = {Molino, P.},
    title = {Feuilletages riemanniens sur les variétés compactes; champs de Killing transverses},
    journal = {Comptes Rendus de l'Académie des Sciences},
    year = {1974},
    volume = {289},
    pages = {421--423}
}

@article{russas,
    author = {Dolgonosova, A. Yu. and Zhukova, N. I.},
    title = {Pseudo-Riemannian foliations and their graphs},
    journal = {Lobachevskii J. of Mathematics},
    volume = 39,
    number = 1,
    year = 2018,
    pages = {54--64}
}

@misc{chrusciellowcausal1,
      title={Elements of causality theory}, 
      author={Chruściel, P. T.},
      year={2011},
      eprint={1110.6706},
      archivePrefix={arXiv},
      primaryClass={gr-qc},
      url={https://arxiv.org/abs/1110.6706}, 
}

@article{length2,
    author = {Burtscher, A. and Garcia-Heveling, L.},
    title = {Time functions on Lorentzian length spaces},
    journal = {Annales Henri Poincaré},
    year = 2024,
}

@article{length3,
    author = {Kunzinger, M. and S\"{a}mann, C.},
    title = {Lorentzian length spaces},
    journal = {Ann. Glob. Anal. Geom.},
    volume = 54,
    number = 3,
    year = 2018,
    pages = {399-447}
}

@article{area,
    author = {Chruściel, P. T. and Delay, E. and Galloway, G. J. and Howard, R.},
    title = {Regularity of horizons and the area theorem},
    journal = {Annales Henri Poincaré},
    volume = 2,
    year = 2001,
    pages = {109-178}
}

@article{c11-1,
    author = {Chruściel, P. T. and Grant, J. D. E.},
    title = {On Lorentzian causality with continuous metrics},
    journal = {Class. Quantum Gravity},
volume = {29},
number = {14},
    year = {2012}
}

@article{c11-3,
    author = {Kunzinger, M. and Steinbauer, R. and Stojković, M.},
    title = {The exponential map of a $C^{1,1}$-metric},
    journal = {Differ. Geom. Appl.},
   volume = {34},
    year = {2014},
pages = {14--24}
}

@article{c11-5,
    author = {Kunzinger, M. and Steinbauer, R. and Stojković, M.},
    title = {Hawking’s singularity theorem for $C^{1,1}$-metrics},
    journal = {Class. Quantum Gravity},
    volume = {32},
    number = {7},
    year = {2015}
}

@article{c11-6,
    author = {Kunzinger, M. and Steinbauer, R. and Vickers, J. A.},
    title = {The Penrose singularity theorem in regularity $C^{1,1}$},
    journal = {Class. Quantum Gravity},
    volume = {32},
    number = {15},
    year = {2015}
}

@article{minguzzicones,
    author = {Minguzzi, E.},
    title = {Causality theory for closed cone structures with applications},
    journal = {Rev. Math. Phys.},
    volume = {31},
    number = {5},
    year = {2019}
}

@article{Hau_2020,
    author = {Hau, L. A. and Cabrera Pacheco, A. J. and Solis, D. A.},
    title = {On the causal hierarchy of Lorentzian length spaces},
    journal = {Classical and Quantum Gravity},
    year = {2020}
}

@article{emmrich,
    author = {Emmrich, C. and Römer, H.},
    title = {Orbifolds as configuration spaces of systems with gauge symmetries},
    journal = {Commun. Math. Phys.},
    volume = {129},
    year = {1990},
    pages = {69--94}
}

@article{Length1,
    author = {Kunzinger, M. and S\"{a}mann, C.},
    title = {Lorentzian length spaces},
    journal = {Ann. Global Anal. Geom.},
    volume = {54},
    number = 3,
    year = 2018,
    pages = {399--447}
}

@article{conefields1,
    author = {Bernard, P. and Suhr, S.},
    title = {Lyapounov functions of closed cone fields from Conley theory to time functions},
    journal = {Comm. Math. Phys.},
    volume = 359,
    number = 2,
    year = {2018},
    pages = {467--498}
}

@article{Fathi,
    author = {Fathi, A. and Siconolfi, A.},
    title = {On smooth time functions},
    journal = {Mathematical Proceedings of the Cambridge Philosophical Society},
    volume = 152,
    number = 2,
    year = {2012},
    pages = {303--339}
}

@article{boyd,
    author = {Boyd, S. and Vandenberge, L.},
    title = {Convex optimization},
    journal = {Cambridge university press},
    year = {2009}
}

@article{cinfbaire,
    author = {Gravesen, J.},
    title = {Whitney $C^\infty$-topologies and the Baire property},
    journal = {Mathematica Scandinavica},
    volume = {52},
    number = {1},
    year = {1983},
    pages = {58--60}
}

@book{tti,
   title =     {Topology Through Inquiry},
   author =    {Starbird M. and Su, F. E.},
   publisher = {MAA Press},
   isbn =      {1470452766; 9781470452766},
   year =      {2019},
   series =    {AMS/MAA Textbooks 58}}

@article{mondino,
    author = {Cavalletti, F. and Mondino, A.},
    title = {A review of Lorentzian synthetic theory of timelike Ricci curvature bounds},
    journal = {Gen. Rel. and Gravitation},
    volume = 54,
    number = 11,
    year = 2022
}

@article{kron-pen,
    author = {E. H. Kronheimer, E.H and R. Penrose, R.},
    title = {On the structure of causal spaces},
    journal = {Mathematical 
Proceedings of the Cambridge Philosophical Society},
    volume = {63},
    pages = {481--505},
    year = {1967}
}

@book{H1967,
author = {H. Busemann},
location = {Warszawa},
publisher = {Instytut Matematyczny Polskiej Akademi Nauk},
title = {Timelike spaces},
url = {http://eudml.org/doc/268386},
year = {1967},
}

@article{javasanchez,
doi = {10.1088/0264-9381/25/16/168001},
url = {https://doi.org/10.1088/0264-9381/25/16/168001},
year = {2008},
month = {aug},
publisher = {},
volume = {25},
number = {16},
pages = {168001},
author = {Javaloyes, Miguel Angel and Sánchez, Miguel},
title = {A note on the existence of standard splittings for conformally stationary spacetimes},
journal = {Classical and Quantum Gravity}
}

@article{levin,
    author = {Levin, V. L.},
    title = {A continuous utility theorem for closed preorders on a $\sigma$-compact metrizable space},
    journal = {Soviet Math. Dokl},
    year = {1983}
}

@article{mingutility,
    author = {Minguzzi, E.},
    title = {Time Functions as Utilities},
    journal = {Communications in Mathematical Physics},
    year = {2010}
}

@article{minguzzicausality,
    author = {Minguzzi, E.},
    title = {Lorentzian causality theory},
    journal = {Living Reviews in Relativity},
    DOI = {10.1007/s41114-019-0019-x},
    number = {22,3},
    year = {2019}
    
}

@article{Zung_2024,
	title={Reeb flows transverse to foliations},
	volume={28},
	ISSN={1465-3060},
	url={http://dx.doi.org/10.2140/gt.2024.28.3661},
	DOI={10.2140/gt.2024.28.3661},
	number={8},
	journal={Geometry \& Topology},
	publisher={Mathematical Sciences Publishers},
	author={Zung, Jonathan},
	year={2024},
	month=Dec, pages={3661–3695} 
	}

@article{PMIHES_1981__54__5_0,
	author = {Sato, Atsushi and Tamura, Itiro},
	title = {On transverse foliations},
	journal = {Publications Math\'ematiques de l'IH\'ES},
	pages = {5--35},
	year = {1981},
	publisher = {Institut des Hautes \'Etudes Scientifiques},
	volume = {54},
	doi = {10.1007/BF02698690},
	mrnumber = {83k:57022},
	zbl = {0484.57016},
	language = {en},
	url = {https://www.numdam.org/articles/10.1007/BF02698690/}
}

@article{GoertschesToben+2018+1+40,
	url = {https://doi.org/10.1515/crelle-2015-0102},
	title = {Equivariant basic cohomology of Riemannian foliations},
	title = {},
	author = {Oliver Goertsches and Dirk Töben},
	pages = {1--40},
	volume = {2018},
	number = {745},
	journal = {Journal für die reine und angewandte Mathematik (Crelles Journal)},
	doi = {doi:10.1515/crelle-2015-0102},
	year = {2018},
	lastchecked = {2026-06-30}
}

@article{royo2005top,
	title={Top-Dimensional Group of the Basic Intersection Cohomology for Singular Riemannian Foliations},
	author={Royo Prieto, Jos{\'e} and Saralegi-Aranguren, Martintxo and Wolak, Robert},
	journal={Bulletin of the Polish Academy of Sciences. Mathematics},
	volume={53},
	number={4},
	pages={429--440},
	year={2005},
	publisher={Institute of Mathematics Polish Academy of Sciences}
}

@article{Habib_2024,
	title={New cohomological invariants of foliations},
	volume={28},
	ISSN={1945-0036},
	url={http://dx.doi.org/10.4310/AJM.250314015548},
	DOI={10.4310/ajm.250314015548},
	number={6},
	journal={Asian Journal of Mathematics},
	publisher={International Press of Boston},
	author={Habib, Georges and Richardson, Ken},
	year={2024},
	pages={757–790} }

@article{10.1093/qmath/haaa071,
	author = {Álvarez López, Jesús A and Jung, Seoung Dal},
	title = {Transversal Hard Lefschetz Theorem on Transversely Symplectic Foliations},
	journal = {The Quarterly Journal of Mathematics},
	volume = {72},
	number = {4},
	pages = {1235-1251},
	year = {2021},
	month = {12},
	issn = {0033-5606},
	doi = {10.1093/qmath/haaa071},
	url = {https://doi.org/10.1093/qmath/haaa071},
	eprint = {https://academic.oup.com/qjmath/article-pdf/72/4/1235/43803079/haaa071.pdf},
}

@article{10.1093/qmath/hax051,
	author = {Lin, Yi},
	title = {Hodge theory on transversely symplectic foliations},
	journal = {The Quarterly Journal of Mathematics},
	volume = {69},
	number = {2},
	pages = {585-609},
	year = {2018},
	month = {06},
	issn = {0033-5606},
	doi = {10.1093/qmath/hax051},
	url = {https://doi.org/10.1093/qmath/hax051},
	eprint = {https://academic.oup.com/qjmath/article-pdf/69/2/585/25015087/hax051.pdf},
}

@article{dolgonosova2018pseudo,
	title={Pseudo-Riemannian foliations and their graphs},
	author={Dolgonosova, A Yu and Zhukova, NI},
	journal={Lobachevskii Journal of Mathematics},
	volume={39},
	number={1},
	pages={54--64},
	year={2018},
	publisher={Springer}
}

@misc{caramello2025hadamardtheoremtransverselyaffine,
	title={A Hadamard theorem in transversely affine geometry with applications to affine orbifolds}, 
	author={Francisco C. Caramello Jr and Henrique A. Puel Martins and Ivan P. Costa e Silva},
	year={2025},
	eprint={2503.06344},
	archivePrefix={arXiv},
	primaryClass={math.DG},
	url={https://arxiv.org/abs/2503.06344}, 
}

\end{document}